\documentclass[trsc]{informs3}

\OneAndAHalfSpacedXI  

\def\ABSfont{\normalsize}

\usepackage{mathtools}
\usepackage{bm}

\usepackage{booktabs}
\usepackage{tabularx}
\usepackage{float}
\usepackage{tikz}
\usetikzlibrary{decorations.pathreplacing,arrows.meta,positioning}
\usepackage{subcaption}

\usepackage{enumitem}
\usepackage{framed}
\usepackage{algorithm}
\floatstyle{ruled}
\restylefloat{algorithm}
\floatstyle{plain}

\usepackage[colorlinks,citecolor=blue,linkcolor=blue,urlcolor=blue]{hyperref}
\usepackage[capitalise,noabbrev]{cleveref}

\definecolor{ured}{RGB}{255, 75, 0}
\definecolor{ublue}{RGB}{0, 90, 255}

\EquationsNumberedThrough
\TheoremsNumberedBySection

\crefname{theorem}{Theorem}{Theorems}
\crefname{lemma}{Lemma}{Lemmas}
\crefname{proposition}{Proposition}{Propositions}
\crefname{corollary}{Corollary}{Corollaries}
\crefname{conjecture}{Conjecture}{Conjectures}
\crefname{definition}{Definition}{Definitions}
\crefname{assumption}{Assumption}{Assumptions}
\crefname{remark}{Remark}{Remarks}
\crefname{example}{Example}{Examples}
\crefname{equation}{Eq.}{Eqs.}
\Crefname{equation}{Eq.}{Eqs.}
\crefname{algorithm}{Algorithm}{Algorithms}
\Crefname{algorithm}{Algorithm}{Algorithms}

\usepackage{natbib}
\bibpunct[, ]{(}{)}{,}{a}{}{,}%
\def\bibfont{\small}%
\newcommand{\ClO}{\mathcal{O}}
\newcommand{\ClS}{\mathcal{S}}
\newcommand{\ClT}{\mathcal{T}}
\newcommand{\ClN}{\mathcal{N}}
\newcommand{\BbR}{\mathbb{R}}

\newcommand{\Vttheta}{\boldsymbol{\theta}}
\newcommand{\Vtdelta}{\boldsymbol{\delta}}
\newcommand{\slotmap}{\sigma}

\newenvironment{thm}[1][]{\begin{theorem}[#1]}{\end{theorem}}
\newenvironment{lem}[1][]{\begin{lemma}[#1]}{\end{lemma}}
\newenvironment{pro}[1][]{\begin{proposition}[#1]}{\end{proposition}}
\newenvironment{cor}[1][]{\begin{corollary}[#1]}{\end{corollary}}

\newenvironment{dfn}[1][]{\begin{definition}[#1]}{\end{definition}}
\newenvironment{asm}[1][]{\begin{assumption}[#1]}{\end{assumption}}

\newenvironment{prf}[1][Proof]{\proof{#1}}{\endproof}

\graphicspath{{fig/}}

\RUNAUTHOR{Sakai and Kawase}
\RUNTITLE{Coarse Reporting in the Bottleneck Model}
\renewcommand{\theMANUSCRIPTNO}{\,}

\TITLE{Coarse Preference Reporting in the
Bottleneck Model: Approximate Strategyproofness and Efficiency}

\ARTICLEAUTHORS{%
\AUTHOR{Takara Sakai}
\AFF{Institute of Science Tokyo, \EMAIL{sakai.t.dcad@m.isct.ac.jp}}
\AUTHOR{Riki Kawase}
\AFF{Institute of Science Tokyo, \EMAIL{kawase.r.8d18@m.isct.ac.jp}}
}

\ABSTRACT{%
A central operator schedules each vehicle's passage time through a
bottleneck to achieve a dynamic system optimum (DSO).
The assignment depends on each vehicle's preferred arrival time,
which is private and must be elicited from each vehicle.
Mechanisms that elicit exact preferences, such as the
Vickrey--Clarke--Groves (VCG) mechanism, can achieve
strategyproofness but involve relatively complex rules and a
computational burden on the operator.
We focus instead on coarse reporting, in which each vehicle selects
from a finite menu of time slots of a common width.
This discrete interface already structures reservation and
appointment systems in practice, including managed lanes for
automated vehicles, airport slot allocation, and delivery
appointment windows.
We design a slot-based DSO mechanism on this coarse interface, in
which the operator implements DSO assignment based on the reported
slots and charges a capacity shadow price as a toll, and evaluate
its performance.
We prove that both the worst-case misreporting gain and the expected
efficiency loss decrease quadratically in the slot width.
The efficiency loss decays in this way under binding capacity, while
the worst-case misreporting gain requires an additional condition on
the preferred arrival time distribution and the schedule cost
function.
Analyzing the no-toll case, we find that the misreporting incentive
persists, however finely the slots are refined, indicating that the
toll also serves to elicit truthful reports.
Numerical experiments support these theoretical results and show
that they continue to hold in parameter regions outside the
sufficient conditions.}

\KEYWORDS{%
bottleneck model; dynamic system optimum;
approximate strategyproofness; slot-based reporting mechanism;
tolling; mechanism design}

\begin{document}
\maketitle

\section{Introduction}
\label{sec:Introduction}

\subsection{Background and Motivation}

A central operator can directly control the time
at which each vehicle passes through a capacity bottleneck in
several transportation systems, including managed lanes
for automated vehicles, airport runway slots, and reservation
based access to congested facilities.
Each vehicle has a preferred arrival time, and arriving earlier or
later incurs a schedule cost.
The operator schedules the passages to minimize the total schedule
cost under the bottleneck capacity, which defines the dynamic
system optimal (DSO) assignment
\citep{Vickrey1969-ic,Arnott1990-pc}.
Each vehicle's preferred arrival time is private information.
Since the DSO assignment depends on the preference distribution,
the operator must elicit preferences truthfully.
The required property is strategyproofness.
No vehicle should benefit from misreporting its preferred arrival
time.

The Vickrey--Clarke--Groves (VCG) mechanism
\citep{Clarke1971-hh,Groves1973-cu} is one approach to this
problem.
VCG computes the socially optimal assignment and charges each
vehicle an individualized transfer equal to the externality it
imposes on other vehicles, thereby achieving exact
strategyproofness and efficiency.
\citet{WadaAkamatsu2013-hy} and \citet{Wang2019-tp} apply
Groves type payments to the trading of bottleneck passage
permits and establish dominant strategy truthfulness in that
setting.
The VCG mechanism, however, has practical
limitations.
Its transfer rule, based on the counterfactual social welfare
without each vehicle, is computationally costly and difficult for
participants to verify
\citep{AusubelMilgrom2006-vm,Rothkopf2007-vg}.
As an alternative that avoids these limitations, we study coarse
preference reporting, in which each vehicle selects from a finite
menu of time slots rather than reporting an exact arrival time.
This coarse reporting corresponds to the discrete interface of
reservation and appointment systems widely used in practice,
which is simpler to operate and to participate in.

We study this coarse preference reporting
interface and develop a theoretical basis for evaluating its
performance.
Instead of eliciting exact continuous preferences, the
interface restricts reports to a finite menu of
time slots.
The operator solves a slot-based DSO problem given the reported
slots and charges the capacity shadow price as a
toll.
We characterize how this already prevalent
coarse interface performs as a function of its reporting
resolution.
We answer two questions:
how fast do the worst-case misreporting incentive and the expected
efficiency loss vanish as the slot menu is refined, and is a
pricing instrument necessary to control manipulation, or does the
slot menu alone suffice?

\subsection{Main Results}

The coarse interface we analyze has each vehicle
select a time slot from a uniform partition of width $\Delta$.
The operator solves a slot-based DSO problem given the reported
slots and charges each vehicle a toll equal to the shadow price
of capacity at the assigned arrival time.
This problem has the same structure as the DSO assignment under
discrete heterogeneity of preferred arrival times
\citep{Lindsey2004-mu,Akamatsu2021-di}.
Because the operator centrally controls vehicle passage times,
the DSO flow pattern under the reported preferences is
implemented directly.
The toll is the capacity shadow price arising
from this assignment.

Our central contribution is the design of a
slot-based DSO mechanism on a coarse, finite resolution preference
interface for bottleneck scheduling and the evaluation of its
performance.
We quantify how its incentive and efficiency properties depend on
the single operational parameter $\Delta$.
Such a coarse interface also simplifies participation and
implementation relative to exact elicitation schemes such as VCG.

We summarize the main results.

First, we prove that the slot-based mechanism
is second order approximately strategyproof.
The worst-case misreporting gain
$\varepsilon^{\ast}(\Delta)$, i.e., the maximum cost reduction
achievable by deviating from truthful reporting, satisfies
$\varepsilon^{\ast}(\Delta) = \ClO(\Delta^{2})$ for every slot
width up to a smoothness-determined upper threshold, under a
peakedness and curvature condition on the preferred arrival time
distribution and the schedule cost function.
The quadratic rate arises because the toll absorbs the leading
effect of a one-slot deviation, leaving only a second order
residual.
Halving the slot width then quarters the worst-case misreporting
gain.

Second, we quantify the efficiency loss.
Because the operator determines the assignment from reported
slots rather than exact preferred arrival times, wider slots
leave more room for efficiency loss relative to the continuous
DSO benchmark.
We establish that the expected efficiency loss
$L^{\ast}(\Delta)$ satisfies
$L^{\ast}(\Delta) = \ClO(\Delta^{2})$ for every $\Delta > 0$
under binding capacity.

The misreporting gain and the efficiency loss
both decay as $\ClO(\Delta^{2})$, so refining the slot menu
improves strategyproofness and efficiency together over the
operational range.
In the nonatomic (price-taking) limit the efficiency guarantee
holds for every $\Delta > 0$, while the strategyproofness guarantee
holds for every $\Delta \in (0, \Delta_{\max}]$, where
$\Delta_{\max}$ keeps the higher-order smoothness remainder dominated
by the concavity margin and leaves enough slots for the worst
deviation (\cref{sec:strategyproofness} and \cref{app:proofs}).

The role of the toll emerges from comparing the
mechanism with its no-toll counterpart.
We prove that without the toll, the misreporting gain stays
bounded away from zero regardless of how finely the slot menu is
partitioned.
Because the operator already enforces the DSO assignment, the
toll is not needed to induce the system optimum. Instead, it
elicits truthful reporting under the coarse interface.
This contrasts with its role in the classical bottleneck model
\citep{Vickrey1969-ic,Arnott1990-pc,Arnott1993-lu}, where the
toll internalizes the queueing externality and shapes the
equilibrium departure pattern.
We develop this comparison in \cref{subsec:role_of_tolling}.

\subsection{Related Work}
\label{subsec:related_work}

The bottleneck model of \citet{Vickrey1969-ic}, extended by
\citet{Arnott1990-pc,Arnott1993-lu}, characterizes equilibrium departure patterns and the optimal
toll under a range of assumptions on user heterogeneity and
network structure.
\citet{Small2015-lc} surveys this literature and
\citet{Li2020-zi} provides a bibliometric review.
Efficient tolling schemes have been derived for heterogeneous
user classes
\citep{ArnottDePalmaLindsey1994-wc,Lindsey2004-mu,
vandenBergVerhoef2011-wd,Akamatsu2021-di}
and corridor or tandem bottleneck networks
\citep{Fu2022-st,Sakai2024-wy}.
These models assume that the operator observes preferences
exactly.
Tradable bottleneck permit schemes
\citep{WadaAkamatsu2013-hy,Wang2019-tp} introduce mechanism
design into this setting through Groves type auction markets,
but they require iterated bidding with precise valuations rather
than the one shot coarse reporting studied here.

In the mechanism design literature,
\citet{Azevedo2018-yk}, \citet{Budish2012-hg}, and
\citet{Balseiro2024-jn} show that coarsening the action space
can yield approximate incentive compatibility in matching and
allocation problems.
\citet{NisanSegal2006-je} and \citet{Blumrosen2007-jc}
characterize how efficiency degrades under bounded
communication.
None of these frameworks parameterizes the coarsening by an
operational quantity such as the slot width in a bottleneck
or derives a convergence rate in that parameter.

Discrete time slots serve as the operational interface in airport
operations \citep{Zografos2017-be}, reservation based demand
management for autonomous traffic \citep{Lamotte2017-ap}, and
dedicated freight lanes \citep{Razmi-Rad2020-nb}, but these
works fix the slot structure rather than treating its width as a
design variable.
A parallel logistics literature
\citep{Agatz2011-tsm,Strauss2021-dpf,Chen2013-mta,Phan2016-cts}
treats the slot width as a design variable in attended home
delivery and port terminal appointments, studying operational
consequences without analyzing strategic incentives.

To our knowledge, no prior work analyzes the strategyproofness
and efficiency of a coarse preference reporting mechanism for
bottleneck scheduling as functions of the reporting resolution.
The slot width is a natural design variable in this context:
as automated and connected vehicle systems expand, the
operator must choose a reporting granularity that balances
simplicity against performance.
The present analysis provides the first quantitative tool for
this choice, giving explicit convergence rates that link the slot
width to both the strategyproofness gap and the efficiency loss.
The theoretical contribution lies in connecting the bottleneck
model literature, which has not addressed strategic
reporting under coarse information, with the mechanism design
literature, which has not parameterized coarsening by an
operational quantity in a congestion model.

\subsection{Paper Organization}

\cref{sec:model} introduces the model framework, comprising the
bottleneck setting, the users, and the schedule delay cost.
\cref{sec:mechanism} presents the proposed slot-based DSO mechanism
and establishes its basic structural properties.
\cref{sec:strategyproofness,sec:efficiency} analyze the
strategyproofness and efficiency of the mechanism and establish the two
quadratic rates that constitute our main results.
\cref{sec:numerical} evaluates the mechanism numerically and
interprets the magnitudes at operationally relevant slot widths.
\cref{sec:discussion} discusses the role of the toll and the broader
operational implications.
\cref{sec:conclusion} concludes, with all proofs collected in the
appendix.

\section{Basic Assumptions}
\label{sec:model}

\cref{subsec:network,subsec:vehicles} state the network and user
assumptions.
The $\Delta$-DSO objects
($q^{\ast}_\Delta$, $p^{\ast}_\Delta$,
$\lambda^{\ast}_\Delta$, $\Gamma^{\ast}_\Delta$) have a
$\Delta$ subscript, and we drop it
($q^{\ast}, p^{\ast}, \lambda^{\ast}, \Gamma^{\ast}$) when
$\Delta$ is fixed.
\cref{tab:notation} in the appendix summarizes notation.

\subsection{Network}
\label{subsec:network}
Consider a single-origin, single-destination network in which an
origin and a destination are connected by an expressway with a
single bottleneck (\cref{fig:network}).
The bottleneck capacity, defined as the maximum number of
vehicles that can pass per unit time, is denoted by $\mu$.
Following the single-bottleneck model of \citet{Vickrey1969-ic},
the operator maintains the bottleneck inflow at or below $\mu$
so that no queue arises.
The free-flow travel time is denoted by~$f$. A vehicle departing
at time $t_{\mathrm{d}}$ arrives at time $t = t_{\mathrm{d}} + f$.
We formulate the model in the arrival time~$t$.

\begin{figure}[tbp]
  \centering
  \includegraphics[clip, width=0.55\textwidth]{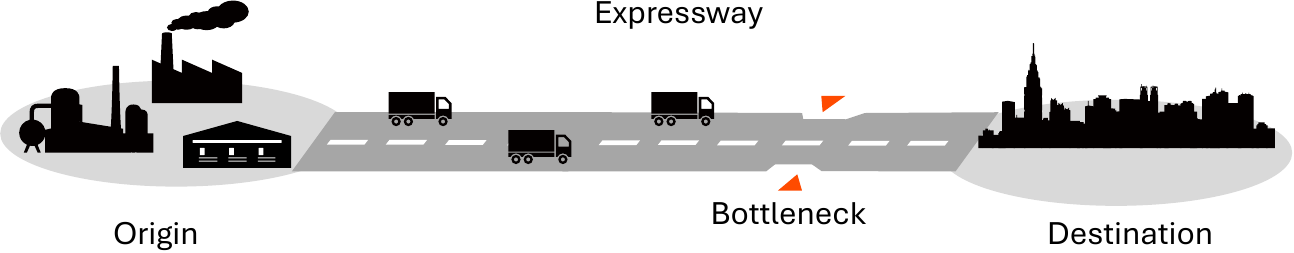}
  \caption{Single-origin, single-destination expressway with one
  bottleneck of capacity $\mu$.}
  \label{fig:network}
\end{figure}

\subsection{Users and Schedule Delay Cost}
\label{subsec:vehicles}
Let $\ClN = \{1, 2, \ldots, N\}$ denote the set of users (vehicles)
served by this network.
The operator schedules arrivals within an
\emph{assignment window} $\ClT = [0, T] \subset \BbR$ of
length~$T$.
Each user $n \in \ClN$ has a preferred arrival time
$\theta_{n} \in \ClS$ at the destination, which is private
information.
Here
$\ClS = [\theta^{-}, \theta^{+}] \subseteq \ClT$
is the support of preferred arrival times, with
width $S \coloneqq \theta^{+} - \theta^{-}$.
The profile of preferred arrival times across all vehicles is
denoted by
$\Vttheta = (\theta_{1}, \ldots, \theta_{N}) \in \ClS^{N}$.
Vehicles are homogeneous, with heterogeneity entering only through the
preferred arrival times $\theta_{n}$.

We adopt the nonatomic (fluid) limit. The population is
represented by the arrival rate density $\nu$ of
\cref{asm:demand_density}, each user has negligible mass, and a
single user therefore cannot alter the aggregate assignment or toll.
The strategyproofness analysis of \cref{sec:strategyproofness} is
accordingly conducted in this price-taking limit (strategyproofness
in the large in the sense of \citet{Azevedo2018-yk}), consistent
with the deterministic representative assumption~(D3) of
\cref{asm:demand_density}.
For a finite population the induced manipulation is $\ClO(1/N)$
(the price impact of one agent among $N$, vanishing in the
large market limit of \citet{Azevedo2018-yk}), and the numerical
experiments of
\cref{sec:numerical} use the finite population mechanism and confirm
that this correction is negligible at the tested sizes
($d(s) = 10$--$60$ vehicles per slot).
Exact dominant-strategy incentive compatibility for a fixed finite
population would instead require the personalized payments of a
Vickrey--Clarke--Groves mechanism, which the coarse anonymous toll
is designed to avoid.
The index set $\ClN = \{1, \ldots, N\}$ and the type $\theta_{n}$ then
serve as labels for a representative user, and $N$ is the total mass
$\int_{\ClS} \nu$.
\begin{asm}[Properties of the preferred arrival time distribution]
  \label{asm:demand_density}
  The preferred arrival times $\{\theta_{n}\}_{n \in \ClN}$ are
  distributed with an \emph{arrival rate density} $\nu$ (units:
  vehicles per unit time) that satisfies the following:
  \begin{enumerate}
    \item[(D1)] (Bounded support and total mass.) $\nu$ is
    supported on the closed interval
    $\ClS = [\theta^{-}, \theta^{+}] \subseteq \ClT$, i.e.,
    $\nu(\theta) \geq 0$ on $\ClS$, $\nu(\theta) = 0$ outside
    $\ClS$, and $\int_{\ClS} \nu(\theta)\,\mathrm{d}\theta = N$.
    \item[(D2)] (Lipschitz continuity.) There exists a constant
    $L_{\nu} \geq 0$ such that
    \begin{align}
      |\nu(\theta) - \nu(\theta')|
      \leq L_{\nu}\,|\theta - \theta'|,
      \qquad \forall \theta, \theta' \in \ClS.
      \label{eq:nu_Lipschitz}
    \end{align}
    \item[(D3)] (Deterministic representative.) The realized
    slotwise count is treated as its expectation under $\nu$,
    eliminating sampling noise in the assignment problem.
  \end{enumerate}
\end{asm}
We write
$\nu^{\mathrm{peak}} \coloneqq \max_{\theta \in \ClS} \nu(\theta)$
for the peak arrival rate of the prior.%
\label{eq:nu_peak_def}

Let $c(\theta, t)$ denote the schedule delay cost incurred when a
vehicle whose preferred arrival time is $\theta$ actually arrives
at time $t$.
The function $c(\theta, t)$ is called the
\emph{schedule delay cost function}.
We write $\dot{c}$ and $\ddot{c}$ for the first and second
derivatives of $c$ with respect to $\theta$ (equivalently,
with respect to~$t$ up to sign by~(C1) below). The subscripted
extrema $\dot{c}_{\max}, \ddot{c}_{\min}, \ddot{c}_{\max}$ refer
to the corresponding bounds taken in absolute value over the
domain $(\theta, t) \in \ClS \times \ClT$.
It satisfies the following properties:
\begin{asm}[Properties of the schedule delay cost function]
  \label{asm:schedule_cost}
  The schedule delay cost function $c(\theta, t)$ satisfies the
  following properties:
  \begin{enumerate}
    \item[(C1)] $c(\theta, t)$ depends only on the deviation between
    the preferred and actual arrival times $(\theta - t)$:
    that is, $\theta - t = \theta' - t'$ implies
    $c(\theta, t) = c(\theta', t')$.
    $c(\theta, t)$ is continuously differentiable in both $\theta$
    and $t$, and satisfies
    $c(\theta, \theta) = \min_{t} c(\theta, t) = 0$.
    \item[(C2)] $c(\theta, t)$ is convex in $(\theta - t)$.
    \item[(C3)] $c(\theta, t)$ is Lipschitz continuous. There exists
    a constant $\dot{c}_{\max} \geq 0$ such that
    \begin{align}
      \bigl|c(\theta, t) - c(\theta', t)\bigr| \leq \dot{c}_{\max} \left|\theta - \theta'\right|,
      \quad \forall \theta, \theta' \in \ClS,\ \forall t \in \ClT
      \label{eq:Lip_schedule}
    \end{align}
    holds. By (C1), $|c(\theta, t) - c(\theta, t')| \leq \dot{c}_{\max}|t - t'|$
    also holds with the same constant.
    \item[(C4)] (Piecewise $C^{2}$ regularity.)
    There exists a finite partition of the deviation domain
    $z = \theta - t$ into intervals such that, on each interval,
    $c$ is twice continuously differentiable in~$z$ and satisfies
    \begin{align}
      0 < \ddot{c}_{\min}
      \leq \frac{\partial^{2} c}{\partial \theta^{2}}(\theta, t)
      \leq \ddot{c}_{\max}
      \label{eq:C2_regularity}
    \end{align}
    for all $(\theta, t) \in \ClS \times \ClT$ with $z = \theta - t$
    off the partition boundaries.
    The lower bound $\ddot{c}_{\min} > 0$ ensures uniform strict
    convexity.
    For an asymmetric quadratic cost
    $c(\theta, t) = \beta(\theta - t)^{2}\mathbf{1}_{t \le \theta}
    + \gamma(t - \theta)^{2}\mathbf{1}_{t > \theta}$ (used in
    \cref{sec:numerical}),
    $\ddot{c}_{\min} = 2\min(\beta, \gamma)$ and
    $\ddot{c}_{\max} = 2\max(\beta, \gamma)$.
    The strict convexity in (C4) ($\ddot{c}_{\min} > 0$) is
    invoked for the $\mathcal{O}(\Delta^{2})$ bounds in
    \cref{sec:strategyproofness,sec:efficiency}.
  \end{enumerate}
\end{asm}
Conditions (C1)--(C2) imply submodularity. If $\theta < \theta'$ and
$t < t'$, then
\begin{align}
  c(\theta, t) + c(\theta', t') \leq c(\theta, t') + c(\theta', t)
  \label{eq:submodular}
\end{align}
holds, which follows from convexity in $(\theta - t)$.
This submodularity condition means that assigning earlier
arrival times to vehicles with earlier preferred arrival times reduces
total cost.

\section{The Slot-Based DSO Mechanism}
\label{sec:mechanism}

This section develops the slot-based DSO mechanism.
\cref{subsec:mechanism} states the operational rules,
\cref{subsec:dso_delta} the operator's response to a reported
profile, \cref{subsec:FIFW} the structural identities of the
optimal assignment and toll, and \cref{subsec:individual_cost}
the per-vehicle cost used in
\cref{sec:strategyproofness,sec:efficiency}.

\subsection{Operational Rules}
\label{subsec:mechanism}
The managed lane for automated vehicles,
including its bottleneck section, is controlled
by an operator.
Each user $n$ reports its preferred arrival time to the operator,
who then determines the assignment and tolls based on these reports.
To use the facility, a user must pay the operator a toll.

The information structure of the mechanism
is as follows.
Each vehicle's true preferred arrival time $\theta_{n}$ is
\textbf{private information}.
The operator observes only the self-reported values
$\hat{\theta}_{n} \in \ClS_\Delta$.
The assignment and toll setting are determined solely on the basis of
the report profile $\hat{\Vttheta}$.
Because each user is a price taker (\cref{subsec:vehicles}), a deviating
user takes the aggregate assignment and toll as given. It need not
observe the reports or true preferred arrival times of other users.
In the proposed mechanism, the set of admissible preferred arrival
time reports is restricted to a finite number of equally spaced
time slots.
The slot width $\Delta > 0$ determines the reporting resolution.
We call this restriction $\Delta$-coarsening:
\begin{dfn}[$\Delta$-coarsening]
  \label{dfn:coarsening}
  $\Delta$-coarsening is the operation of restricting preferred
  arrival time reports to the equally spaced finite set
  $\ClS_{\Delta} = \{s_{1}, s_{2}, \ldots, s_{M}\}$.
  Here, $\Delta > 0$ is called the coarsening parameter (slot width)
  and satisfies $s_{i+1} - s_{i} = \Delta$.
  Each slot $s_{i}$ represents the midpoint (representative time) of
  the time interval $[s_{i} - \Delta/2,\ s_{i} + \Delta/2)$ contained
  in that slot.
\end{dfn}
The $M$~slots cover the assignment window $\ClT = [0, T]$.
The slot width $\Delta$ is chosen as a divisor of~$T$, so that
$M = T/\Delta$ is a positive integer.
The slot nearest to vehicle $n$'s
preferred arrival time $\theta_{n}$ is denoted by
\begin{align}
  r_{\Delta}(\theta_{n}) \coloneqq \arg\min_{s \in \ClS_{\Delta}} \left|s - \theta_{n}\right|
  \label{eq:rounding}
\end{align}
(with ties broken by an arbitrary convention).
The mapping $r_\Delta$ is called the rounding map.
The coarsening error is bounded by
$|\theta_{n} - r_\Delta(\theta_{n})| \leq \Delta/2$.
The slot width $\Delta$ controls the reporting resolution. In the
limit $\Delta \to 0$, the set of admissible slots approaches a
continuous time axis.

Given the slot width $\Delta$, the mechanism proceeds as in
\cref{alg:dso}.

\begin{algorithm}[t]
  \caption{Slot-based DSO mechanism (slot width~$\Delta$)}
  \label{alg:dso}
  \begin{enumerate}[leftmargin=2.2em, itemsep=2pt, topsep=4pt,
                    label=\textup{\arabic*.}]
    \item The operator announces $\Delta$ and the slot set
      $\ClS_{\Delta}$.
    \item Each user $n$ reports $\hat{\theta}_{n} \in \ClS_{\Delta}$
      (possibly strategically, not necessarily
      $r_{\Delta}(\theta_{n})$), giving the profile
      $\hat{\Vttheta} = (\hat{\theta}_{1}, \ldots, \hat{\theta}_{N})$.
    \item The operator solves the slot-based DSO problem
      (\cref{subsec:dso_delta}) for the inflow pattern
      $q^{\ast}_{\Delta}(s,t)$ and toll $p^{\ast}_{\Delta}(t)$.
    \item Within each slot $s$, the $d(s)$ users are placed
      uniformly at random over the $d(s)$ equispaced positions in
      $\Gamma^{\ast}_\Delta(s)$, independently across slots.
    \item Users pass at the assigned times and pay the corresponding
      tolls.
  \end{enumerate}
\end{algorithm}

\cref{fig:framework} illustrates the mechanism.
Each vehicle has a true preferred arrival time~$\theta_{n}$
(left axis), which it maps to a reported slot~$\hat{\theta}_{n}$
(center axis) via the rounding map~$r_\Delta$.
A vehicle may report truthfully
($\hat{\theta}_{n} = r_\Delta(\theta_{n})$, as user~$n$ in the
figure) or misreport to a different slot
($\hat{\theta}_{n} \neq r_\Delta(\theta_{n})$, as user~$n'$).
The operator observes only the reported slots and solves the
slot-based DSO problem to determine the assignment interval
$\Gamma^{\ast}_\Delta(s)$ for each slot~$s$ (right axis).

\begin{figure}[tbp]
  \centering
  \begin{tikzpicture}[
    >=stealth,
    slot/.style={fill=black!8, draw=none},
    intv/.style={fill=blue!12, draw=blue!40, semithick},
    lbl/.style={font=\footnotesize},
    every node/.style={font=\small},
  ]
    \def\xT{0}        
    \def\xR{4.5}      
    \def\xA{8.8}      

    \def\sA{0.8}   \def\sB{2.4}   \def\sC{4.0}
    \def\hw{0.8}

    \draw[rounded corners=5pt, green!60!black, thick]
      (3.2,-0.8) rectangle (10.4,6.2);
    \node[green!60!black, font=\small\bfseries,
      fill=white, inner sep=2pt]
      at (6.8,6.2) {Operator: $\Delta$-DSO assignment};

    \node[blue, font=\small\bfseries, align=center]
      at (\xT,5.3) {True preferred\\[-1pt]arrival time};
    \node[red!80!black, font=\small\bfseries, align=center]
      at (\xR,5.2) {Reported\\[-1pt]slot};
    \node[font=\small\bfseries, align=center]
      at (\xA,5.2) {Assigned\\[-1pt]arrival time};

    \foreach \x in {\xT,\xR,\xA} {
      \draw[gray!50, thin] (\x,-0.5) -- (\x,4.6);
    }

    \foreach \sc/\lab in {\sA/$s_{1}$, \sB/$s_{2}$, \sC/$s_{3}$} {
      \fill[slot] (\xR-0.25, \sc-\hw) rectangle (\xR+0.25, \sc+\hw);
      \draw[gray!50, thin]
        (\xR-0.25, \sc+\hw) -- (\xR+0.25, \sc+\hw);
      \draw[gray!50, thin]
        (\xR-0.25, \sc-\hw) -- (\xR+0.25, \sc-\hw);
      \draw[thick] (\xR-0.1, \sc) -- (\xR+0.1, \sc);
      \node[right, lbl] at (\xR+0.3, \sc) {\lab};
    }

    \draw[<->, thin] (\xR+0.75, \sB-\hw) -- (\xR+0.75, \sB+\hw);
    \node[right, lbl] at (\xR+0.8, \sB) {$\Delta$};

    \foreach \sc in {\sA, \sB, \sC} {
      \fill[intv] (\xA-0.15, \sc-\hw) rectangle (\xA+0.15, \sc+\hw);
    }
    \foreach \bd in {0.0, 1.6, 3.2, 4.8} {
      \draw[blue!40, semithick] (\xA-0.15, \bd) -- (\xA+0.15, \bd);
    }

    \draw[<->, thin] (\xA+0.4, \sB-\hw) -- (\xA+0.4, \sB+\hw);
    \node[right, lbl] at (\xA+0.45, \sB) {$\Gamma^*_\Delta(s_{2})$};

    \foreach \sc in {\sA, \sB, \sC} {
      \draw[->, gray!40, thin, dashed]
        (\xR+0.25, \sc) -- (\xA-0.14, \sc);
    }

    \def\thn{2.6}
    \fill[blue] (\xT, \thn) circle (3pt);
    \node[above left, blue, lbl] at (\xT-0.08, \thn) {$\theta_{n}$};
    \node[left, lbl] at (-0.8, \thn) {User $n$};

    \draw[->, blue, thick]
      (\xT+0.12, \thn) -- (\xR-0.25, \sB);

    \node[blue, font=\scriptsize\itshape]
      at (1.6, 3.5) {truthful reporting};
    \node[blue, font=\scriptsize,
      draw=blue!60, fill=white, rounded corners=2pt,
      inner sep=2pt]
      at (1.6, 3.1) {$\hat{\theta}_{n} = r_\Delta(\theta_{n})$};

    \def\thnp{1.0}
    \fill[red!80!black] (\xT, \thnp) +(-3pt,-3pt) rectangle +(3pt,3pt);
    \node[below left, red!80!black, lbl] at (\xT-0.08, \thnp)
      {$\theta_{n'}$};
    \node[left, lbl] at (-0.8, \thnp) {User $n'$};

    \draw[->, red!80!black, thick]
      (\xT+0.12, \thnp) to[out=15, in=210] (\xR-0.25, \sB);

    \node[red!80!black, font=\scriptsize,
      draw=red!60, fill=white, rounded corners=2pt,
      inner sep=2pt]
      at (1.6, 1.1) {$\hat{\theta}_{n'} \neq r_\Delta(\theta_{n'})$};
    \node[red!80!black, font=\scriptsize\itshape]
      at (1.6, 0.7) {misreporting};
  \end{tikzpicture}
  \caption{Slot-based reporting mechanism. Each vehicle's true
  preferred arrival time $\theta_{n}$ (left axis) is rounded to a
  reported slot $\hat{\theta}_{n}$ (center axis) via
  $r_\Delta$. The operator solves the slot-based DSO problem and
  returns the assignment interval $\Gamma^{\ast}_\Delta(s)$
  (right axis).}
  \label{fig:framework}
\end{figure}

\subsection{The Slot-Based DSO Problem}
\label{subsec:dso_delta}
We refer to the operator's cost-minimization problem under
$\Delta$-coarsened reports (\cref{dfn:coarsening}) as the
\emph{slot-based DSO problem}, a discretized variant of the
classical DSO bottleneck problem
\citep{Arnott1990-pc,Arnott1993-lu}.
The per-slot count $d(s_{i})$ is determined by the report
profile (\cref{eq:demand} below) and is generally non-uniform
across slots.
From the report profile $\hat{\Vttheta}$, the total number of
vehicles reporting each slot (the aggregate demand) is
\begin{align}
  d(s) \equiv \sum_{n \in \ClN} \mathbf{1}_{\hat{\theta}_{n}= s},
  \qquad \forall s \in \ClS_{\Delta}
  \label{eq:demand}
\end{align}
where $\mathbf{1}_{A}$ is the indicator function of event $A$.
Under \cref{asm:demand_density}, we use the deterministic
representative $d(s_{i}) = \int_{[s_{i} - \Delta/2,\,
s_{i} + \Delta/2)} \nu(\theta)\,\mathrm{d}\theta$ for truthful
reports, so that
$|d(s_{i+1}) - d(s_{i})| \leq L_\nu \Delta^2$ for adjacent
slots within $\ClS$ and $d(s_{i}) = 0$ outside $\ClS$.
Using this, we formulate the operator's total schedule delay cost minimization
problem as follows:
\begin{align}
  \texttt{[$\Delta$-DSO]} \qquad
  \min_{\{q_{\Delta}(s,t)\}_{s, t}} \quad &
  \underbrace{\sum_{s \in \ClS_{\Delta}} \int_{t \in \ClT} c(s, t)\, q_{\Delta}(s, t) \,\mathrm{d}t}_{\text{total schedule delay cost under reported slots}}
  \label{eq:DSO_obj}
  \\
  \text{s.t.} \quad
  &\underbrace{\sum_{s \in \ClS_{\Delta}} q_{\Delta}(s, t)}_{\text{aggregate inflow at }t} \leq \mu
  &&\forall t \in \ClT,
  \label{eq:DSO_cap}
  \\
  &\underbrace{\int_{t \in \ClT} q_{\Delta}(s, t) \,\mathrm{d}t}_{\text{total flow assigned to slot }s} = d(s)
  &&\forall s \in \ClS_{\Delta},
  \label{eq:DSO_dem}
  \\
  &q_{\Delta}(s, t) \geq 0
  &&\forall s \in \ClS_{\Delta},\ \forall t \in \ClT.
  \label{eq:DSO_nonneg}
\end{align}
Here, $q_{\Delta}(s, t)$ is the flow rate assigned to actual arrival
time $t$ among vehicles that reported slot $s$.
Constraint \labelcref{eq:DSO_cap} is the capacity constraint, and
\cref{eq:DSO_dem} is the demand conservation condition.

Throughout this paper, we assume that the total demand~$N$ is
sufficiently concentrated relative to the bottleneck
capacity~$\mu$ so that the capacity constraint~\labelcref{eq:DSO_cap}
binds at all times during the demand period (the binding capacity
condition).
We choose the assignment window to coincide with the active period
and impose $T = N/\mu$, so that capacity is saturated throughout
the assignment window, as in the baseline of \cref{sec:numerical}.
Partial congestion, in which capacity is slack off-peak, lies
outside the present scope.
This condition ensures that the sorting property of the optimal
flow (\cref{subsec:FIFW}) and the toll structure
(\cref{subsec:toll_property}) are analytically tractable.

Note that the operator solves the slot-based DSO problem treating
the reported preferred arrival time $s \in \ClS_\Delta$ as the
preferred arrival time.
That is, the objective function \labelcref{eq:DSO_obj} uses the reported
value $s$ as the first argument of the schedule delay cost.
When a vehicle's true preferred arrival time $\theta_{n}$ differs from
its report $\hat{\theta}_{n}$, a discrepancy arises between the cost
perceived by the operator and the true cost incurred by the vehicle.
In \cref{sec:efficiency}, we analyze the impact of this discrepancy.

As in the classical bottleneck model
\citep{Arnott1990-pc,Small2015-lc}, the optimal toll that supports
the DSO assignment is obtained from the Lagrange multipliers of
the slot-based DSO problem.
Let $p_{\Delta}^{\ast}(t) \geq 0$ denote the multiplier on the
capacity constraint~\labelcref{eq:DSO_cap}, and let
$\lambda_{\Delta}^{\ast}(s)$ denote the multiplier on the demand
conservation condition~\labelcref{eq:DSO_dem}, which represents the
equilibrium generalized cost for slot~$s$.
We call $p^{\ast}_{\Delta}$ the
\emph{time-dependent managed lane toll},
abbreviated to the \emph{managed lane toll}. It
is the
Lagrange multiplier of the capacity constraint and therefore
coincides with the marginal externality of capacity usage at
time~$t$ under the coarsened reports.
If reports were continuous (i.e., $\Delta \to 0$), the same
determination reduces to the classical optimal congestion toll of
\citet{Vickrey1969-ic}. The managed lane toll
uses the same
formula but evaluates it under reports of resolution~$\Delta$.
The Karush--Kuhn--Tucker (KKT) optimality conditions are:
\begin{align}
  &\begin{dcases}
    c(s, t) + p^{\ast}_{\Delta}(t) = \lambda^{\ast}_{\Delta}(s) &\text{if} \quad q^{\ast}_{\Delta}(s, t) > 0
    \\
    c(s, t) + p^{\ast}_{\Delta}(t) \geq \lambda^{\ast}_{\Delta}(s) &\text{if} \quad q^{\ast}_{\Delta}(s, t) = 0
  \end{dcases}
  &&\forall s \in \ClS_{\Delta},\ \forall t \in \ClT,
  \label{eq:KKT_flow}
  \\
  &\begin{dcases}
    \sum_{s \in \ClS_{\Delta}} q^{\ast}_{\Delta}(s, t) = \mu &\text{if} \quad p^{\ast}_{\Delta}(t) > 0
    \\
    \sum_{s \in \ClS_{\Delta}} q^{\ast}_{\Delta}(s, t) \leq \mu &\text{if} \quad p^{\ast}_{\Delta}(t) = 0
  \end{dcases}
  &&\forall t \in \ClT.
  \label{eq:KKT_cap}
\end{align}
Condition \labelcref{eq:KKT_flow} states that for pairs $(s, t)$ with
positive flow, the generalized cost (schedule delay cost plus toll) equals
the equilibrium cost $\lambda^{\ast}_{\Delta}(s)$.
For pairs with zero flow, the generalized cost is at least as large
as the equilibrium cost.
This means that $p^{\ast}_{\Delta}(t)$ is a toll that internalizes the
congestion externality at time $t$.
Condition \labelcref{eq:KKT_cap} is a complementary slackness condition
stating that capacity is fully utilized at all times when the toll is
positive.

From \cref{eq:KKT_flow}, for any time $t$ at which
$p^{\ast}_{\Delta}(t) > 0$ (i.e., within the congested period),
the toll can be expressed as
\begin{align}
  p^{\ast}_{\Delta}(t) =
  \underbrace{\lambda^{\ast}_{\Delta}(\slotmap(t))}_{\substack{\text{equilibrium}\\\text{generalized cost}\\\text{of slot } \slotmap(t)}}
  -
  \underbrace{c(\slotmap(t), t)}_{\substack{\text{schedule delay cost}\\\text{at }t\text{ for a vehicle}\\\text{reporting }\slotmap(t)}},
  \label{eq:toll_expression}
\end{align}
where $\slotmap(t)$ denotes the slot with positive flow at time $t$
(under the sorting property, $\slotmap(t)$ is uniquely
determined, as shown in \cref{subsec:FIFW}).
Outside the congested period, $p^{\ast}_{\Delta}(t) = 0$.

\subsection{Assignment and Toll Pattern}
\label{subsec:FIFW}

We focus on the congested case $\nu^{\mathrm{peak}} > \mu$.
The trivial case $\nu^{\mathrm{peak}} \leq \mu$ gives
$p^{\ast}_{\Delta} \equiv 0$.
Under the submodularity \labelcref{eq:submodular} from (C1)--(C2),
the optimal $\Delta$-DSO assigns earlier reported slots to
earlier arrival times.

\begin{lem}[Sorting property of the $\Delta$-DSO flow pattern]
  \label{lem:FIFW}
  Under conditions (C1)--(C2) of \cref{asm:schedule_cost}, the optimal solution to the
  slot-based DSO problem assigns vehicles in the order of their
  reported preferred arrival times:
  if $s_{i} < s_{j}$, then for any $t, t'$ satisfying
  $q^{\ast}_{\Delta}(s_{i}, t) > 0$ and $q^{\ast}_{\Delta}(s_{j}, t') > 0$,
  we have $t \leq t'$.
\end{lem}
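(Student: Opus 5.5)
The plan is an exchange (uncrossing) argument. It can be run either directly on the flows or through the KKT conditions \labelcref{eq:KKT_flow}; the KKT route is cleaner because it avoids building explicit mass-preserving perturbations. The only structural input is the submodularity inequality \labelcref{eq:submodular}, which the paper derives from (C1)--(C2).

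\emph{Step 1 (crossing forces an equality).} Argue by contradiction. Suppose $s_i<s_j$ and there exist $t'<t$ with $q^{\ast}_{\Delta}(s_i,t)>0$ and $q^{\ast}_{\Delta}(s_j,t')>0$, so the two slots are ``crossed''. Since $q^\ast_\Delta(s_i,t)>0$, the equality case of \labelcref{eq:KKT_flow} holds for slot $s_i$ at $t$, and the inequality case holds at $t'$:
\[
c(s_i,t)+p^\ast_\Delta(t)=\lambda^\ast_\Delta(s_i)\le c(s_i,t')+p^\ast_\Delta(t').
\]
Symmetrically, for slot $s_j$,
\[
c(s_j,t')+p^\ast_\Delta(t')=\lambda^\ast_\Delta(s_j)\le c(s_j,t)+p^\ast_\Delta(t).
\]
Adding the two lines cancels the tolls and gives
\[
c(s_i,t)+c(s_j,t')\le c(s_i,t')+c(s_j,t).
\]
On the other hand, \labelcref{eq:submodular} applied with $\theta=s_i<\theta'=s_j$ and $t'<t$ gives the reverse inequality. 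Hence equality must hold.

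\emph{Step 2 (ruling out the equality).} This is the main obstacle, because (C2) alone is only weak convexity. Write $g(z)=c(\theta,t)$ with $z=\theta-t$, as allowed by (C1). The equality becomes
\[
g(s_i-t)+g(s_j-t')=g(s_i-t')+g(s_j-t).
\]
The two outer arguments $s_i-t$ and $s_j-t'$ bracket the inner ones $s_i-t'$ and $s_j-t$: both inner points lie strictly between them, and the two pairs have the same sum. Strict convexity of $g$ would therefore make the left side strictly larger, so the equality is impossible.

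Uniform strict convexity is supplied by (C4) through $\ddot c_{\min}>0$ on each piece. Since $g$ is $C^1$ by (C1), $g'$ is then strictly increasing across the pieces, so $g$ is strictly convex globally. With this, Step 1 yields a contradiction, and every optimal solution is sorted.

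If one insists on (C1)--(C2) only, I would weaken the claim to the existence of a sorted optimal solution. The argument for that version goes as follows. Take any optimal $q^\ast_\Delta$, and rearrange the slot masses monotonically: fill $[0,T]$ with capacity $\mu$, placing slot $s_1$ first, then $s_2$, and so on, which is the quantile coupling. The equality case of Step 1 shows that each elementary swap of crossed mass leaves the cost unchanged. The monotone rearrangement can be reached, or approximated in cost, by such swaps, so it attains the same optimal cost.

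\emph{Step 3 (measure-theoretic care).} Since $q$ is a density in $t$, I will interpret ``$q^\ast_\Delta(s,t)>0$'' on the essential support. Concretely, I apply Step 1 to Lebesgue points $t'<t$ lying in positive-measure sets where the respective flows are positive. The KKT conditions hold almost everywhere, which is enough to run Steps 1--2 at such points.

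Finally, under the binding capacity condition $T=N/\mu$, the sorted structure means each slot occupies a contiguous interval $\Gamma^\ast_\Delta(s)$ of length $d(s)/\mu$. These intervals are ordered as the slots are, which also makes $\slotmap(t)$ well defined almost everywhere, as used in \cref{eq:toll_expression}.
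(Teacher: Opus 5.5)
Your argument is correct, but it works through the dual where the paper works through the primal. The paper takes a crossed pair $s<s'$, $t'<t$ with positive flow and moves $\delta$ units from $(s,t),(s',t')$ to $(s,t'),(s',t)$, which preserves every demand and capacity constraint. It then uses \labelcref{eq:submodular} to show that the cost changes by $\delta[c(s,t')+c(s',t)-c(s,t)-c(s',t')]\le 0$.

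You instead add the two complementary-slackness relations of \labelcref{eq:KKT_flow} so that the tolls cancel, which is the cyclical-monotonicity argument familiar from optimal transport. Each route has its own cost:
\begin{itemize}
\item The exchange route is self-contained and does not need the multipliers $p^{\ast}_\Delta,\lambda^{\ast}_\Delta$ to exist; the paper posits them rather than proving dual attainment for this infinite-dimensional program. To be rigorous, however, it must swap mass on sets of positive measure, since $q_\Delta(s,\cdot)$ is a density in $t$.
\item Your route takes the multipliers as given and needs no feasible perturbation. The null-set issue is then handled by your Step~3.
\end{itemize}

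Your Step~2 also addresses a point the paper's proof passes over. A change that is only $\le 0$ does not show that a crossed assignment is suboptimal. Under (C1)--(C2) alone the statement is in fact false: $c\equiv 0$, or any admissible cost that is flat on an interval of deviations, admits crossed optimal flows. Both arguments close once (C4) makes $c$ strictly convex in $\theta-t$, and hence makes \labelcref{eq:submodular} strict. That is exactly the hypothesis you import. With it, the paper's swap gives a strictly negative change, and your summed KKT inequality contradicts strict convexity directly; the equality detour in your Step~1 is then unnecessary.

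Your fallback claims a sorted optimum exists under (C1)--(C2). There, the statement that the monotone rearrangement ``can be reached, or approximated in cost, by such swaps'' is only a sketch. Under the binding condition $T=N/\mu$, the $\Delta$-DSO is exactly the transport problem between $\sum_{s} d(s)\,\delta_{s}$ and $\mu\,\mathrm{d}t$ on $\ClT$. It is cleaner to cite the classical optimality of the monotone (quantile) coupling for costs convex in $\theta-t$.
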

\begin{prf}
See \cref{app:proof_FIFW}.
\end{prf}

Define
$\Gamma^{\ast}_{\Delta}(s)
\coloneqq \{t \in \ClT \mid q^{\ast}_{\Delta}(s, t) > 0\}$.
Under \cref{lem:FIFW}, $\Gamma^{\ast}_{\Delta}(s)$ is a
contiguous interval
$[t^{-}_{\Delta}(s), t^{+}_{\Delta}(s)]$ with
$t^{+}_{\Delta}(s_{i}) = t^{-}_{\Delta}(s_{i+1})$, and where the
capacity binds
\begin{align}
  q^{\ast}_{\Delta}(s, t) =
  \mu \mathbf{1}_{t \in \Gamma^{\ast}_{\Delta}(s)},
  \qquad
  t^{+}_{\Delta}(s) - t^{-}_{\Delta}(s) = d(s)/\mu.
  \label{eq:flow_analytical}
\end{align}

\label{subsec:toll_property}%
The KKT condition~\labelcref{eq:KKT_flow} together with the
active period boundary
$p^{\ast}_{\Delta}(t^{-}_{\Delta}(s_{1})) = 0$ pins the
equilibrium cost recursively:
\begin{align}
  \lambda^{\ast}_{\Delta}(s_{1}) &= c(s_{1},\, t^{-}_{\Delta}(s_{1})),
  \label{eq:lambda_init}
  \\
  \lambda^{\ast}_{\Delta}(s_{i+1}) &=
  \lambda^{\ast}_{\Delta}(s_{i}) - c(s_{i},\, t^{+}_{\Delta}(s_{i}))
  + c(s_{i+1},\, t^{-}_{\Delta}(s_{i+1})),
  \label{eq:lambda_recursion}
\end{align}
where contiguity of adjacent assignment intervals
($t^{+}_{\Delta}(s_{i}) = t^{-}_{\Delta}(s_{i+1})$) follows from
\cref{lem:FIFW}.
The toll then reads
$p^{\ast}_{\Delta}(t) = \lambda^{\ast}_{\Delta}(s) - c(s, t)$ on
$\Gamma^{\ast}_{\Delta}(s)$ via~\cref{eq:toll_expression}, and the
Lipschitz continuity of $c$ transfers to the toll:
\begin{lem}[Lipschitz Continuity of the Toll]
  \label{lem:toll_Lip}
  Under \cref{asm:schedule_cost} and \cref{lem:FIFW}, the
  optimal toll $p^{\ast}_{\Delta}(t)$ satisfies
  \begin{align}
    \bigl|p^{\ast}_{\Delta}(t) - p^{\ast}_{\Delta}(t')\bigr|
    \leq L_{p} |t - t'|,
    \quad \forall t, t' \in \ClT,
    \label{eq:toll_Lip}
  \end{align}
  with $L_{p} = \dot{c}_{\max}$.
\end{lem}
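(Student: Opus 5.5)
The plan is to use the piecewise representation of the toll given just before the statement. On each assignment interval $\Gamma^{\ast}_{\Delta}(s)$ we have $p^{\ast}_{\Delta}(t) = \lambda^{\ast}_{\Delta}(s) - c(s,t)$. Here $\lambda^{\ast}_{\Delta}(s)$ is constant in $t$, so the Lipschitz bound within a single slot's interval follows directly from (C3) in its $t$-form:
\begin{align*}
|p^{\ast}_{\Delta}(t) - p^{\ast}_{\Delta}(t')| = |c(s,t) - c(s,t')| \leq \dot{c}_{\max}|t-t'|, \qquad t,t' \in \Gamma^{\ast}_{\Delta}(s).
\end{align*}

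The main step is to show that $p^{\ast}_{\Delta}$ is continuous across the junctions between adjacent intervals. By \cref{lem:FIFW} these intervals are contiguous, with $t^{+}_{\Delta}(s_i) = t^{-}_{\Delta}(s_{i+1})$.

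\begin{itemize}
\item Write $\tau_i$ for this common endpoint. Evaluating the left-piece formula at $\tau_i$ gives $\lambda^{\ast}_{\Delta}(s_i) - c(s_i,\tau_i)$.
\item Evaluating the right-piece formula gives $\lambda^{\ast}_{\Delta}(s_{i+1}) - c(s_{i+1},\tau_i)$.
\item The recursion \cref{eq:lambda_recursion} is exactly the statement that these two values coincide.
\item One could instead derive the same equality from the KKT inequality \cref{eq:KKT_flow} applied in both directions at $\tau_i$, using closedness of the intervals and continuity of $c$.
\end{itemize}

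So $p^{\ast}_{\Delta}$ is a continuous function on the active period, and on each piece it is $\dot{c}_{\max}$-Lipschitz.

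To pass from the pieces to the whole window, take $t<t'$ lying in different intervals. Insert the intermediate junction points $\tau_i,\dots,\tau_{j-1}$ between them. The triangle inequality then telescopes the piecewise bounds into
\begin{align*}
|p^{\ast}_{\Delta}(t) - p^{\ast}_{\Delta}(t')| \leq \dot{c}_{\max}\bigl[(\tau_i - t) + \dots + (t' - \tau_{j-1})\bigr] = \dot{c}_{\max}(t'-t).
\end{align*}
Under the binding-capacity convention $T=N/\mu$, the intervals tile all of $\ClT$, so no further case is needed. Slots with $d(s)=0$ have degenerate intervals and can simply be skipped in the chain. If one wants to allow an inactive region, the toll is $0$ there and continuity at the boundary comes from $p^{\ast}_{\Delta}(t^{-}_{\Delta}(s_1))=0$, so the same chaining applies.

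I expect the only delicate point to be the junction continuity. It needs the fact that the recursion \cref{eq:lambda_recursion} genuinely holds, which in turn relies on the endpoint $\tau_i$ belonging to the support closure of both slots. I would justify this through the KKT conditions together with continuity of $c$ and the closed-interval convention for $\Gamma^{\ast}_{\Delta}$.
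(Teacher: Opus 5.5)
Your proposal is correct and follows the same route as the paper's proof. Both get the $\dot{c}_{\max}$-Lipschitz bound on each $\Gamma^{\ast}_{\Delta}(s)$ from $p^{\ast}_{\Delta}(t)=\lambda^{\ast}_{\Delta}(s)-c(s,t)$, get continuity at each junction $\tau_i=t^{+}_{\Delta}(s_i)=t^{-}_{\Delta}(s_{i+1})$ from $\lambda^{\ast}_{\Delta}(s_{i+1})-c(s_{i+1},\tau_i)=\lambda^{\ast}_{\Delta}(s_i)-c(s_i,\tau_i)$, and then combine the two. You spell out the telescoping chain and justify the junction identity by a two-sided KKT limit, which makes explicit what the paper only asserts when it says the KKT equality holds at $t_b$ for both slots.
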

\begin{prf}
See \cref{app:proof_toll_Lip}.
\end{prf}

\cref{fig:toll_pattern} illustrates the structure. Each
$\Gamma^{\ast}_{\Delta}(s_{i})$ is contiguous and ordered by
report (\cref{lem:FIFW}), and on each interval
$c(s_{i}, t) + p^{\ast}_{\Delta}(t) = \lambda^{\ast}_{\Delta}(s_{i})$
by the KKT condition~\labelcref{eq:KKT_flow} of the slot-based DSO
problem.
The toll is continuous across slot boundaries and vanishes at the
active-period endpoints.

\begin{figure}[tbp]
  \centering
  \begin{tikzpicture}[
    >=stealth,
    every node/.style={font=\small},
    eqC/.style={black!60, dashed, semithick},
    toll/.style={red!80!black, very thick},
  ]
    \draw[->, thick] (-0.2, 0) -- (12.2, 0)
      node[right, font=\small] {$t$};
    \draw[->, thick] (0, -0.2) -- (0, 4.4)
      node[above, font=\small] {cost};

    \def\xa{0.8}     
    \def\xz{10.8}    
    \pgfmathsetmacro{\xb}{0.8 + 1.25}
    \pgfmathsetmacro{\xc}{0.8 + 2.5}
    \pgfmathsetmacro{\xd}{0.8 + 3.75}
    \pgfmathsetmacro{\xe}{0.8 + 5.0}
    \pgfmathsetmacro{\xf}{0.8 + 6.25}
    \pgfmathsetmacro{\xg}{0.8 + 7.5}
    \pgfmathsetmacro{\xh}{0.8 + 8.75}

    \def\lA{1.6}
    \def\lB{2.4}
    \def\lC{2.9}
    \def\lD{3.1}
    \def\lE{3.1}
    \def\lF{2.9}
    \def\lG{2.4}
    \def\lH{1.6}

    \foreach \x in {\xa, \xb, \xc, \xd, \xe, \xf, \xg, \xh, \xz} {
      \draw[gray!70, dashed, semithick,
            dash pattern={on 3pt off 2pt}] (\x, 0) -- (\x, 3.7);
    }

    \draw[toll, smooth, samples=40, domain=\xa:\xb]
      plot (\x, {0.0 + 0.7 * (\x - \xa) / 1.25
                 + 0.45 * (\x - \xa) * (\xb - \x) / 1.5625});
    \draw[toll, smooth, samples=40, domain=\xb:\xc]
      plot (\x, {0.7 + 0.65 * (\x - \xb) / 1.25
                 + 0.45 * (\x - \xb) * (\xc - \x) / 1.5625});
    \draw[toll, smooth, samples=40, domain=\xc:\xd]
      plot (\x, {1.35 + 0.5 * (\x - \xc) / 1.25
                 + 0.45 * (\x - \xc) * (\xd - \x) / 1.5625});
    \draw[toll, smooth, samples=40, domain=\xd:\xe]
      plot (\x, {1.85 + 0.2 * (\x - \xd) / 1.25
                 + 0.45 * (\x - \xd) * (\xe - \x) / 1.5625});
    \draw[toll, smooth, samples=40, domain=\xe:\xf]
      plot (\x, {2.05 - 0.2 * (\x - \xe) / 1.25
                 + 0.45 * (\x - \xe) * (\xf - \x) / 1.5625});
    \draw[toll, smooth, samples=40, domain=\xf:\xg]
      plot (\x, {1.85 - 0.5 * (\x - \xf) / 1.25
                 + 0.45 * (\x - \xf) * (\xg - \x) / 1.5625});
    \draw[toll, smooth, samples=40, domain=\xg:\xh]
      plot (\x, {1.35 - 0.65 * (\x - \xg) / 1.25
                 + 0.45 * (\x - \xg) * (\xh - \x) / 1.5625});
    \draw[toll, smooth, samples=40, domain=\xh:\xz]
      plot (\x, {0.7 - 0.7 * (\x - \xh) / 1.25
                 + 0.45 * (\x - \xh) * (\xz - \x) / 1.5625});
    \node[red!80!black, font=\footnotesize, above]
      at (5.8, 2.05) {$p^{\ast}_{\Delta}(t)$};

    \draw[eqC] (\xa, \lA) -- (\xb, \lA);
    \draw[eqC] (\xb, \lB) -- (\xc, \lB);
    \draw[eqC] (\xc, \lC) -- (\xd, \lC);
    \draw[eqC] (\xd, \lD) -- (\xe, \lD);
    \draw[eqC] (\xe, \lE) -- (\xf, \lE);
    \draw[eqC] (\xf, \lF) -- (\xg, \lF);
    \draw[eqC] (\xg, \lG) -- (\xh, \lG);
    \draw[eqC] (\xh, \lH) -- (\xz, \lH);

    \node[black, font=\scriptsize, above]
      at ({(\xc + \xd)/2}, \lC) {$\lambda^{\ast}_{\Delta}(s_{3})$};

    \pgfmathsetmacro{\tannot}{(\xc + \xd)/2}
    \pgfmathsetmacro{\pannot}{1.35
      + 0.5 * (\tannot - \xc) / 1.25
      + 0.45 * (\tannot - \xc) * (\xd - \tannot) / 1.5625}
    \draw[<->, very thin, blue!70!black]
      ({\tannot}, {\pannot}) -- ({\tannot}, \lC);
    \node[blue!70!black, font=\scriptsize, right]
      at ({\tannot+0.05}, {(\pannot + \lC)/2}) {$c(s_{3}, t)$};

    \foreach \xL/\xR/\name in
      {{\xa}/{\xb}/{1}, {\xb}/{\xc}/{2}, {\xc}/{\xd}/{3},
       {\xd}/{\xe}/{4}, {\xe}/{\xf}/{5}, {\xf}/{\xg}/{6},
       {\xg}/{\xh}/{7}, {\xh}/{\xz}/{M}} {
      \draw[decorate, decoration={brace, mirror, raise=4pt},
            semithick, gray!70!black]
        (\xL, 0) -- (\xR, 0);
      \node[below, font=\scriptsize, black]
        at ({(\xL+\xR)/2}, -0.42)
        {$\Gamma^{\ast}_{\Delta}(s_{\name})$};
    }

    \node[below, font=\scriptsize, black]
      at (\xa, -1.0) {$t^{-}_{\Delta}(s_{1})$};
    \node[below, font=\scriptsize, black]
      at (\xz, -1.0) {$t^{+}_{\Delta}(s_{M})$};

    \draw[->, semithick, green!50!black] (\xa, 4.05) -- (\xz, 4.05);
    \node[above, font=\scriptsize, green!50!black]
      at ({(\xa+\xz)/2}, 4.05)
      {sorting: $s_{1} < s_{2} < \cdots < s_{M} \Rightarrow$
       $\Gamma^{\ast}_{\Delta}(s_{1}), \ldots, \Gamma^{\ast}_{\Delta}(s_{M})$
       contiguous and ordered};
  \end{tikzpicture}
  \caption{Sorting and toll pattern in the $\Delta$-DSO
    solution.}
  \label{fig:toll_pattern}
\end{figure}

\subsection{Individual Vehicle Costs and True System Cost}
\label{subsec:individual_cost}

This subsection introduces two cost measures used throughout.
The \emph{individual decision cost} $C_\Delta(s;\theta)$ is what a
vehicle minimizes and includes the toll.
The \emph{social cost} $J$ aggregates schedule delay only and, since
the toll is a transfer between vehicles and the operator, is
unaffected by it.
The efficiency analysis of \cref{sec:efficiency} compares the
coarse-report social cost $J^{\mathrm{slot}}$ against its continuous
optimum $J^{\ast}$.
We take the within-slot placement of \cref{alg:dso}~(step~4) to be
uniform over the equispaced positions, which makes the slot mean of
the generalized cost the relevant decision cost. A deterministic or
worst-case placement would change $C_\Delta$ and the analysis
accordingly.

Under the optimal solution to the slot-based DSO problem, a vehicle
that reported slot $s$ arrives at some time within the assignment
interval
$\Gamma^{\ast}_\Delta(s) = [t^{-}_\Delta(s), t^{+}_\Delta(s)]$
of width $W_\Delta(s) \coloneqq t^{+}_\Delta(s) - t^{-}_\Delta(s)$.
The capacity constraint~\labelcref{eq:DSO_cap} forces the $d(s)$ vehicles
reporting slot $s$ to arrive at equally spaced times $1/\mu$ apart
inside $\Gamma^{\ast}_\Delta(s)$, so that $W_\Delta(s) = d(s)/\mu$.

A vehicle with true preferred arrival time $\theta_{n}$ that
ultimately arrives at a specific position $\tau \in
\Gamma^{\ast}_\Delta(s)$ incurs schedule delay cost
$c(\theta_{n}, \tau)$ and pays toll $p^{\ast}_{\Delta}(\tau)$,
for a generalized cost of $c(\theta_{n}, \tau) + p^{\ast}_{\Delta}(\tau)$
at the realized position.
By the uniform within-slot randomization (step~4 of the mechanism
in \cref{subsec:mechanism}), the realized position is drawn
uniformly from the $d(s)$ equispaced candidates, so the ex-ante
expected generalized cost is the discrete mean of
$c(\theta_{n}, \cdot) + p^{\ast}_{\Delta}(\cdot)$ over those
$d(s)$ positions.
We use this expected cost as the vehicle's decision cost.

\begin{lem}[Mean-cost evaluation within the slot]
  \label{lem:within_slot_mean}
  Under \cref{asm:schedule_cost}, the discrete
  mean of the schedule-plus-toll cost over the $d(s)$ equispaced
  positions of $\Gamma^{\ast}_\Delta(s)$ equals the continuous
  uniform mean over $\Gamma^{\ast}_\Delta(s)$ up to
  $\mathcal{O}(\Delta^2)$:
  \begin{align}
    \frac{1}{d(s)}\sum_{k=1}^{d(s)}
    \bigl[c(\theta, \tau_{k}) + p^{\ast}_\Delta(\tau_{k})\bigr]
    = C_\Delta(s;\, \theta) + \mathcal{O}(\Delta^2),
    \label{eq:discrete_to_continuous}
  \end{align}
  where $\tau_{k} = t^{-}_\Delta(s) + (k - 1/2)/\mu$ for
  $k = 1, \ldots, d(s)$ are the equispaced positions
  (the midpoints of the $d(s)$ subintervals of width $1/\mu$
  partitioning $\Gamma^{\ast}_\Delta(s)$), and
  \begin{align}
    C_\Delta(s;\, \theta) \coloneqq
    \frac{1}{W_\Delta(s)}
    \int_{\Gamma^{\ast}_\Delta(s)}
    \bigl[c(\theta, t) + p^{\ast}_\Delta(t)\bigr]\,\mathrm{d}t.
    \label{eq:individual_cost}
  \end{align}
\end{lem}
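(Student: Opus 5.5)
The plan is to treat the left-hand side of \cref{eq:discrete_to_continuous} as a composite midpoint rule for the integral in \cref{eq:individual_cost}. The $d(s)$ subintervals $I_k \coloneqq [t^{-}_\Delta(s) + (k-1)/\mu,\; t^{-}_\Delta(s) + k/\mu]$ have width $h \coloneqq 1/\mu$ and partition $\Gamma^{\ast}_\Delta(s)$ exactly, since $W_\Delta(s) = d(s)/\mu$ by \cref{eq:flow_analytical}. The point $\tau_k$ is the midpoint of $I_k$. Writing $g(t) \coloneqq c(\theta,t) + p^{\ast}_\Delta(t)$, we get
\begin{align}
  \frac{1}{d(s)}\sum_{k=1}^{d(s)} g(\tau_k) - C_\Delta(s;\theta)
  = \frac{1}{W_\Delta(s)} \sum_{k=1}^{d(s)} \int_{I_k} \bigl[g(\tau_k) - g(t)\bigr]\,\mathrm{d}t .
  \label{eq:plan_midpoint}
\end{align}
Everything therefore reduces to bounding the local midpoint errors $e_k \coloneqq \int_{I_k}[g(t)-g(\tau_k)]\,\mathrm{d}t$.

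The regularity of $g$ comes from the paper's assumptions. By (C4), $c(\theta,\cdot)$ is piecewise $C^2$ in $t$ with $|\partial_t^2 c| \le \ddot c_{\max}$. The toll has a usable form on $\Gamma^{\ast}_\Delta(s)$: by \cref{eq:toll_expression}, $p^{\ast}_\Delta(t) = \lambda^{\ast}_\Delta(s) - c(s,t)$ there. Since $\lambda^{\ast}_\Delta(s)$ is constant on the interval, $g(t) = \lambda^{\ast}_\Delta(s) + c(\theta,t) - c(s,t)$ is also piecewise $C^2$ on $\Gamma^{\ast}_\Delta(s)$, with $|g''| \le 2\ddot c_{\max}$ off finitely many kinks. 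At most two kinks matter, namely $t=\theta$ and $t=s$ when the partition in (C4) is the one at $z=0$. More generally the number of kinks is bounded by the finite partition size, independently of $\Delta$. Lipschitz continuity with constant $2\dot c_{\max}$ holds everywhere, by (C3) and \cref{lem:toll_Lip}.

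The error bound splits by subinterval. On a subinterval with no kink, Taylor expansion about $\tau_k$ cancels the linear term by symmetry, so $|e_k| \le \ddot c_{\max} h^3/12$. On the boundedly many subintervals containing a kink, the Lipschitz bound gives $|e_k| \le 2\dot c_{\max} h^2/4$; a sharper estimate, expanding one-sidedly and using that $\partial_t g$ jumps by a bounded amount, gives the same order $h^2$. Summing and dividing by $W_\Delta(s) = d(s) h$ yields
\begin{align}
  \Bigl|\frac{1}{d(s)}\sum_{k} g(\tau_k) - C_\Delta(s;\theta)\Bigr|
  \le \frac{\ddot c_{\max}}{12} h^2 + \frac{K\,\dot c_{\max}\, h}{d(s)},
  \label{eq:plan_bound}
\end{align}
where $K$ is the kink count.

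The main obstacle is translating this into $\mathcal{O}(\Delta^2)$, since $h = 1/\mu$ is a fixed per-vehicle spacing and not $\Delta$. I would resolve it through the scale relations of the nonatomic model. Slot mass is $d(s) \approx \nu(s)\Delta$, so $W_\Delta(s) = d(s)/\mu \asymp \Delta$, and the discreteness per slot, $h/W_\Delta(s) = 1/d(s)$, is the relevant small parameter.

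Rescaling time by $W_\Delta(s)$, the first term of \cref{eq:plan_bound} is $\ddot c_{\max} W_\Delta(s)^2/(12\,d(s)^2) \le \ddot c_{\max} W_\Delta(s)^2/12 = \mathcal{O}(\Delta^2)$. For the kink term I would argue that in the fluid limit (D3), the count $d(s)$ grows as the population scales with $\Delta$ fixed. The kink term is then $\mathcal{O}(W_\Delta(s)/d(s)^2)$ after the sharper kink estimate, which is dominated by the $\mathcal{O}(\Delta^2)$ term. If this step fails to be uniform, I would fall back on stating the constant explicitly as $\ddot c_{\max}(W_\Delta(s))^2/12$ plus a kink correction of relative order $1/d(s)$. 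The kink correction can then be absorbed under the paper's working assumption of many vehicles per slot.
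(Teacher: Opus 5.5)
\textbf{Verdict: the route is the paper's, with one flawed step.} You use a composite midpoint rule on the $d(s)$ cells of width $h = 1/\mu$. The identity $p^{\ast}_\Delta(t) = \lambda^{\ast}_\Delta(s) - c(s,t)$ on $\Gamma^{\ast}_\Delta(s)$ gives $|g''| \le 2\ddot{c}_{\max}$, the total error is $\ddot{c}_{\max}h^{2}/12$, and you convert it via $h = W_\Delta(s)/d(s) \le W_\Delta(s) \le (\nu^{\mathrm{peak}}/\mu)\Delta$. All of this matches the paper. The step that goes wrong is the kink term.

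\textbf{The kink term.} You treat the breakpoints of (C4) as points where $\partial_t g$ jumps, but (C1) makes $c$ continuously differentiable. So $g = \lambda^{\ast}_\Delta(s) + c(\theta,\cdot) - c(s,\cdot)$ is $C^{1}$ on $\Gamma^{\ast}_\Delta(s)$, and only $g''$ is discontinuous there. Hence $g'$ is Lipschitz with constant $2\ddot{c}_{\max}$. Writing $e_k = \int_{I_k}[g(t) - g(\tau_k) - g'(\tau_k)(t-\tau_k)]\,\mathrm{d}t$ then gives $|e_k| \le \ddot{c}_{\max}h^{3}/12$ on every cell, breakpoint cells included. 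Your term $K\dot{c}_{\max}h/d(s)$ therefore never arises; the ``jump'' in your sharper estimate is zero.

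\textbf{Why your rescue fails.} As written, that term equals $K\dot{c}_{\max}W_\Delta(s)/d(s)^{2}$, which is only $\mathcal{O}(\Delta)$ at bounded occupancy. Letting $d(s)$ grow does not work in the lemma's regime. With $(\nu,\mu)$ fixed, $d(s)\approx \nu\Delta$ shrinks as $\Delta \to 0$, and (D3) says nothing about enlarging the population at fixed $\Delta$. Your fallback of a correction of relative order $1/d(s)$ also leaves the stated rate unproved.

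\textbf{The fix.} Drop the kink discussion and invoke (C1). The error is then $\ddot{c}_{\max}/(12\mu^{2}) \le \ddot{c}_{\max}(\nu^{\mathrm{peak}}/\mu)^{2}\Delta^{2}/12$, which is exactly the paper's bound. This is also cleaner than the paper's own remark that kinks add finitely many extra $\mathcal{O}(1/\mu^{2})$ corrections: under (C1) they add nothing beyond the per-cell $h^{3}$ bound.

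\textbf{A hidden assumption to state.} Your step $W_\Delta(s)^{2}/(12\,d(s)^{2}) \le W_\Delta(s)^{2}/12$ uses $d(s) \ge 1$ without saying so. State it, as the paper does. The bound is $\mathcal{O}(\Delta^{2})$ only when slots are occupied ($\Delta \gtrsim 1/\nu^{\mathrm{peak}}$); below that it degenerates to the $\Delta$-independent floor $\ddot{c}_{\max}/(12\mu^{2})$.
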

\begin{prf}
See \cref{app:proof_within_slot_mean}.
\end{prf}
The subsequent analysis uses the continuum surrogate $C_\Delta$
as the canonical decision cost, with the $\mathcal{O}(\Delta^2)$
discrepancy absorbed by the rate of the main results.
We write $C_\Delta(\hat{\theta}_{n};\, \theta_{n})$ for the cost of a
price-taking user of type $\theta_{n}$ that reports slot
$\hat{\theta}_{n}$, given the aggregate assignment
$q^{\ast}_{\Delta}$ and toll $p^{\ast}_{\Delta}$. Because a single
user has negligible mass, these aggregates do not depend on its
own report.
The aggregate counterpart, the \emph{true system cost}, sums
schedule delay at the true preferred arrival times:
\begin{align}
  J(\hat{\Vttheta} \mid \Vttheta,\, \Delta)
  \coloneqq
  \sum_{n \in \ClN}
  \frac{1}{t^{+}_{\Delta}(\hat{\theta}_{n}) - t^{-}_{\Delta}(\hat{\theta}_{n})}
  \int_{t^{-}_{\Delta}(\hat{\theta}_{n})}^{t^{+}_{\Delta}(\hat{\theta}_{n})}
  c(\theta_{n}, t) \,\mathrm{d}t.
  \label{eq:true_system_cost}
\end{align}
Toll revenue is lump-sum redistributed, so tolls cancel in $J$
but remain in $C_\Delta$.
$C_\Delta$ underlies the strategyproofness analysis of
\cref{sec:strategyproofness}, and $J$ underlies the efficiency
analysis of \cref{sec:efficiency}.

\section{Strategyproofness Analysis}
\label{sec:strategyproofness}

This section establishes the $\mathcal{O}(\Delta^{2})$ rate of
the $\varepsilon$-strategyproofness parameter
$\varepsilon^{\ast}(\Delta)$ under the slot-based DSO mechanism.
\cref{subsec:truthful} introduces strategyproofness and its
$\varepsilon$-relaxation as the relevant metrics.
\cref{subsec:regularity} defines the $\varepsilon$-strategyproofness
parameter $\varepsilon^{\ast}(\Delta)$ and bounds the per-vehicle
misreport gain $g_{\Delta}(k)$ at the adjacent slot ($k = \pm 1$)
quadratically.
\cref{subsec:eps_bound} extends the adjacent bound to arbitrary
$k$ and converts the result into the global bound on
$\varepsilon^{\ast}(\Delta)$.

\subsection{Strategyproofness and $\varepsilon$-Strategyproofness}
\label{subsec:truthful}

Strategyproofness is a mechanism design metric originating with
\citet{Clarke1971-hh,Groves1973-cu}:
it records whether any participant can reduce its cost by
unilaterally misreporting, taking the reports of others as
truthful.
The quantitative relaxation introduced by
\citet{Azevedo2018-yk,Budish2012-hg,Balseiro2024-jn} bounds the
worst such gain by a parameter $\varepsilon \geq 0$, giving a
graded measure of how close a mechanism is to strategyproofness.
This graded notion is an instance of approximate
strategyproofness, the family of relaxations surveyed by
\citet{LubinParkes2012-as}.
In our setting the question reduces to whether a vehicle with
preferred arrival time $\theta_{n}$ reports a slot other than
its \emph{truthful report} $r_{\Delta}(\theta_{n})$ defined by
the rounding map~\labelcref{eq:rounding}.
We formalize strategyproofness and its $\varepsilon$-relaxation
as follows.

\begin{dfn}[Strategyproofness in the large]
  \label{dfn:SP}
  The mechanism is \emph{strategyproof in the large} if, for every
  vehicle~$n$, every true preferred arrival time~$\theta_{n}$, and
  every report $\hat{\theta}_{n} \in \ClS_\Delta$,
  \begin{align}
    C_{\Delta}(r_{\Delta}(\theta_{n});\, \theta_{n})
    \leq
    C_{\Delta}(\hat{\theta}_{n};\, \theta_{n}).
    \label{eq:SP}
  \end{align}
  Here $C_{\Delta}(\cdot;\, \theta_{n})$ is evaluated at the aggregate
  assignment and toll induced by the population, which a price-taking
  user takes as given.
  This is the nonatomic, price-taking notion (in the sense of
  \citet{Azevedo2018-yk}) rather
  than dominant-strategy incentive compatibility for a finite
  population.
  We refer to it simply as \emph{strategyproofness} in the remainder
  of the paper.
\end{dfn}

The continuous-report limit is the relevant benchmark. When slots
shrink to points, the toll restores exact strategyproofness.
Write $C_{0}(\hat\theta;\theta) \coloneqq \lim_{\Delta \to 0}
C_{\Delta}(\hat\theta;\theta)$ for the cost of a price-taking
type-$\theta$ user reporting $\hat\theta$ under the continuous
assignment and toll.

\begin{pro}[Exact strategyproofness in the continuous limit]
  \label{prop:continuous_SP}
  Under \cref{asm:schedule_cost,asm:demand_density}, in the
  price-taking limit the continuous mechanism is strategyproof:
  for every type $\theta$ and every report $\hat\theta \in \ClS$,
  \begin{align}
    C_{0}(\theta;\theta) \;\leq\; C_{0}(\hat\theta;\theta),
    \label{eq:continuous_SP}
  \end{align}
  with strict inequality for $\hat\theta \neq \theta$ under the
  strict convexity implied by~(C4) of \cref{asm:schedule_cost}.
\end{pro}
\begin{prf}
  Reporting $\hat\theta$ assigns the user to the time
  $\tau(\hat\theta) = \arg\min_{t}\{c(\hat\theta, t) +
  p^{\ast}_{0}(t)\}$ at which slot $\hat\theta$ is served in the
  tolled optimum~\labelcref{eq:KKT_flow}, so its cost is
  $c(\theta, \tau(\hat\theta)) + p^{\ast}_{0}(\tau(\hat\theta))$.
  As $\hat\theta$ ranges over $\ClS$, $\tau(\hat\theta)$ sweeps the
  active window, so the user minimizes
  $c(\theta, t) + p^{\ast}_{0}(t)$ over it. By the same tolled
  optimality applied to slot $\theta$, together with the strict
  convexity implied by~(C4), its unique minimizer is $\tau(\theta)$, attained by
  truthful reporting.
\end{prf}

\cref{prop:continuous_SP} is the marginal-cost (Pigovian)
decentralization of the dynamic system optimum
\citep{Vickrey1969-ic, Arnott1990-pc, Arnott1993-lu}, recast as a
direct-revelation mechanism. Equivalently, the anonymous toll
implements the efficient assignment truthfully by the single-crossing
argument standard in mechanism design
\citep{Clarke1971-hh, Groves1973-cu}, and is strategyproof in the
large \citep{Azevedo2018-yk}.
The toll is thus the device that secures strategyproofness in the
ideal. Coarse reporting is the sole obstruction.

Exact strategyproofness is unattainable under coarse reports:
the rounding map $r_{\Delta}$ is discontinuous across slot
boundaries, so a vehicle near a boundary can always gain by
misreporting to the adjacent slot.
We measure approximation to strategyproofness by the smallest
$\varepsilon$ that bounds every unilateral deviation gain, and
formally introduce $\varepsilon$-strategyproofness as follows.

\begin{dfn}[$\varepsilon$-strategyproofness]
  \label{dfn:eps_SP}
  The mechanism is $\varepsilon$-strategyproof if, for every
  $n, \theta_{n}, \hat{\theta}_{n}$,
  \begin{align}
    C_{\Delta}(r_{\Delta}(\theta_{n});\, \theta_{n})
    \leq
    C_{\Delta}(\hat{\theta}_{n};\, \theta_{n})
    + \varepsilon.
    \label{eq:eps_SP}
  \end{align}
\end{dfn}

The smallest $\varepsilon$ satisfying Inequality~\labelcref{eq:eps_SP}, denoted
$\varepsilon^{\ast}(\Delta)$, depends on the slot width $\Delta$
and is the central quantity of this section.
Setting $\varepsilon = 0$ recovers \cref{dfn:SP}.

\subsection{Quadratic Bound on the Misreport Gain}
\label{subsec:regularity}
\label{subsec:misreport_bound}%

We therefore characterize how $\varepsilon^{\ast}(\Delta)$
scales with $\Delta$.
From \cref{dfn:eps_SP}, the tightest $\varepsilon$ for which the
mechanism is $\varepsilon$-strategyproof under coarsening
$\Delta$ is
\begin{align}
  \varepsilon^{\ast}(\Delta)
  \coloneqq
  \sup_{n, \theta_{n}, \hat{\theta}_{n}}
  \Bigl\{
    C_{\Delta}(r_{\Delta}(\theta_{n});\, \theta_{n})
    - C_{\Delta}(\hat{\theta}_{n};\, \theta_{n})
  \Bigr\}_{+},
  \label{eq:eps_Delta_def}
\end{align}
with $\{x\}_{+} \coloneqq \max\{0, x\}$.

To do so constructively, we focus on the cost reduction obtained
by misreporting $k$ slots away from the truthful slot
$r_{\Delta}(\theta_{n})$:
\begin{align}
  g_{\Delta}(k;\, \theta_{n})
  \;\coloneqq\;
  C_{\Delta}(r_{\Delta}(\theta_{n});\, \theta_{n})
  - C_{\Delta}(r_{\Delta}(\theta_{n}) + k\Delta;\, \theta_{n}),
  \label{eq:g_def}
\end{align}
abbreviated $g_{\Delta}(k)$ when the context is clear.
For the adjacent slot case $k = \pm 1$, the schedule cost curvature
(\cref{asm:schedule_cost}(C4)) and the peak rate
$\nu^{\mathrm{peak}}/\mu$ bound the gain quadratically:

\begin{lem}[Quadratic bound on the adjacent slot misreport gain]
  \label{lem:eps_quadratic}
  Under \cref{asm:schedule_cost,asm:demand_density}, for every
  $n, \theta_{n}$,
  \begin{align}
    \bigl|g_{\Delta}(\pm 1;\, \theta_{n})\bigr|
    \leq \ddot{c}_{\max}\,(\nu^{\mathrm{peak}}/\mu)\,\Delta^{2},
    \label{eq:eps_quadratic}
  \end{align}
  with $\ddot{c}_{\max}, \nu^{\mathrm{peak}}$ from
  \cref{asm:schedule_cost,asm:demand_density}.
\end{lem}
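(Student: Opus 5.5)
The plan is to use the KKT identity to strip the toll out of the comparison. By \cref{eq:KKT_flow}, on $\Gamma^{\ast}_\Delta(s)$ we have $p^{\ast}_\Delta(t) = \lambda^{\ast}_\Delta(s) - c(s,t)$. Substituting this into \cref{eq:individual_cost} gives
\begin{align*}
  C_\Delta(s;\theta) = \lambda^{\ast}_\Delta(s)
  + \frac{1}{W_\Delta(s)}\int_{\Gamma^{\ast}_\Delta(s)}
  \bigl[c(\theta,t) - c(s,t)\bigr]\,\mathrm{d}t .
\end{align*}
Write $s = r_\Delta(\theta)$ and $s' = s \pm \Delta$. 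Then $g_\Delta(\pm1;\theta)$ splits into two pieces: the jump $\lambda^{\ast}_\Delta(s) - \lambda^{\ast}_\Delta(s')$, and the difference of the two interval averages of $c(\theta,\cdot) - c(\cdot,\cdot)$. The idea is that the toll makes the first-order parts of these pieces cancel, leaving only a second-order remainder.

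For the jump term, I would use the recursion \cref{eq:lambda_recursion}, taking the case $s' = s+\Delta$ (the other case is symmetric). It gives $\lambda^{\ast}(s') - \lambda^{\ast}(s) = c(s',t_b) - c(s,t_b)$, where $t_b = t^{+}_\Delta(s) = t^{-}_\Delta(s')$ is the common boundary. For the averaged terms, note that by (C1) the integrand $c(\theta,t) - c(s,t)$ depends on $t$ only through the deviation, and it is a difference of $c$ at two types. Taylor expanding in the type variable around $s$ (piecewise, using (C4)) gives
\begin{align*}
  c(\theta,t) - c(s,t) = \dot c(s,t)(\theta - s) + R,
  \qquad |R| \le \tfrac12 \ddot c_{\max}(\theta - s)^2 .
\end{align*}
The same expansion applies to $c(\theta,t) - c(s',t)$ and to the boundary jump $c(s',t_b) - c(s,t_b)$. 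Collecting terms, the gain becomes a combination of $\dot c$ evaluated at different times, weighted by $\theta - s$ and $\Delta$.

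Each time at which $\dot c$ is evaluated lies within one interval width of $t_b$. By \cref{eq:flow_analytical}, $W_\Delta(s) = d(s)/\mu \le \nu^{\mathrm{peak}}\Delta/\mu$. Hence every first-order mismatch $\dot c(\cdot,t) - \dot c(\cdot,t_b)$ is bounded by $\ddot c_{\max}|t - t_b| \le \ddot c_{\max}(\nu^{\mathrm{peak}}/\mu)\Delta$. This factor multiplies $|\theta - s| \le \Delta/2$ or $|\theta - s'| \le 3\Delta/2$. The pure type-curvature remainders are $\mathcal{O}(\ddot c_{\max}\Delta^2)$. Since the congested case has $\nu^{\mathrm{peak}} > \mu$, these remainders are also absorbed into $\ddot c_{\max}(\nu^{\mathrm{peak}}/\mu)\Delta^2$.

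The main obstacle is the bookkeeping of constants needed to land exactly on the coefficient $1$, rather than some multiple like $3/2$ or $2$. I would first try to get tighter control by exploiting the structure. The truthful average is centered on $s$'s interval, and the signed contributions from the two sides of $t_b$ partially cancel. In particular, the term $(\theta - s)$ times the average of $\dot c$ over $\Gamma(s)$ minus its value at $t_b$ should cancel against the analogous term on $\Gamma(s')$ to leading order. If the sharp constant does not come out, I would settle for the bound $\mathcal{O}\bigl(\ddot c_{\max}(\nu^{\mathrm{peak}}/\mu)\Delta^2\bigr)$ and then tighten the constant. One route is to write everything as a single integral of $\ddot c$ over a triangle region of area at most $(\nu^{\mathrm{peak}}/\mu)\Delta^2$. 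A second route is to use the averaging symmetry of the two adjacent intervals. Piecewise $C^2$ kinks in $c$ are handled by integrating $\ddot c$ piecewise, since $\dot c$ is continuous by (C1).
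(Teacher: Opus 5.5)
Your skeleton matches the paper's. Substituting the KKT toll and pivoting at the common boundary $t_b$ through the $\lambda$-recursion (equivalently, continuity of the toll) turns the gain into
\begin{align*}
  g_\Delta(+1;\theta)
  = \frac{1}{W_\Delta(s)}\int_{\Gamma^{\ast}_\Delta(s)}\bigl[h_{s}(t)-h_{s}(t_b)\bigr]\,\mathrm{d}t
  - \frac{1}{W_\Delta(s')}\int_{\Gamma^{\ast}_\Delta(s')}\bigl[h_{s'}(t)-h_{s'}(t_b)\bigr]\,\mathrm{d}t,
  \qquad h_{s}(t) \coloneqq c(\theta,t)-c(s,t).
\end{align*}
This is exactly what the paper computes when it integrates $G_t$ along the segment $[a_i+u,\,a_i+W+u]$, which straddles $t_b$.

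The gap is in how you propose to bound this expression. Expanding $h_s$ in the type variable and bounding the first-order part and the remainder separately cannot give the coefficient $1$, for two reasons.
\begin{enumerate}
  \item The remainders $\tfrac12\ddot{c}_{\max}(\theta-s)^2$ and $\tfrac12\ddot{c}_{\max}(\theta-s')^2$, plus the one from the boundary jump, are of size $\ddot{c}_{\max}\Delta^2$ with no factor $W_\Delta/\Delta$. Under merely piecewise $C^2$ regularity there is no third-derivative control, and $\ddot{c}$ jumps at kinks such as $\theta=t$ for the asymmetric cost. So the remainders at $t$ and at $t_b$ need not cancel.
  \item Bounding $|t-t_b|\le W_\Delta$ pointwise, rather than by its interval average $W_\Delta/2$, already doubles the first-order part: you get $(\Delta/2+3\Delta/2)\,\ddot{c}_{\max}W_\Delta = 2\ddot{c}_{\max}\Delta W_\Delta$.
\end{enumerate}
So the route as written yields $C\,\ddot{c}_{\max}(\nu^{\mathrm{peak}}/\mu)\Delta^2$ with some $C>1$. ``Settling for'' the order does not prove the lemma, because the lemma asserts the constant.

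The fix is your own fallback made precise: do not expand in the type variable at all. By (C1), $c_t$ depends only on $\theta-t$ and is continuous. By (C4), it is piecewise $C^1$ with derivative bounded by $\ddot{c}_{\max}$. Hence it is globally Lipschitz in the type, across the kinks, and
\[
|\partial_t h_s(t)| = |c_t(\theta,t)-c_t(s,t)| \le \ddot{c}_{\max}|\theta-s|,
\]
which is the paper's slotwise gradient bound. It follows that $|h_s(t)-h_s(t_b)|\le \ddot{c}_{\max}|\theta-s|\,|t-t_b|$. Averaging $|t-t_b|$ over each interval gives
\begin{align*}
  |g_\Delta(+1;\theta)|
  \le \tfrac12\ddot{c}_{\max}\bigl[|\theta-s|\,W_\Delta(s)+|\theta-s'|\,W_\Delta(s')\bigr]
  \le \ddot{c}_{\max}\,\Delta\,\max\{W_\Delta(s),W_\Delta(s')\}
  \le \ddot{c}_{\max}(\nu^{\mathrm{peak}}/\mu)\Delta^2 .
\end{align*}
The second step uses $|\theta-s|+|\theta-s'|\le 2\Delta$, and the third uses $W_\Delta\le \nu^{\mathrm{peak}}\Delta/\mu$ on truthfully loaded slots. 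Your ``region'' is then the type-by-time rectangle between $s$ and $\theta$ and between $t_b$ and $t$, whose average area is $|\theta-s|\,W_\Delta/2$.

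Done this way, your pivot at $t_b$ is slightly more general than the paper's version. It does not need $W_\Delta(s)=W_\Delta(s')$, which the paper's change of variables $\Gamma_{i+1}=[a_i+W,\,a_i+2W]$ tacitly assumes.
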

\begin{prf}
See \cref{app:proof_eps_quadratic}.
\end{prf}

\subsection{$\varepsilon$-strategyproofness bound}
\label{subsec:eps_bound}

This subsection extends the adjacent slot bound of
\cref{lem:eps_quadratic} to arbitrary misreport distances
$k \in \mathbb{Z}$ and converts the resulting uniform bound on
$g_{\Delta}(k)$ into the $\ClO(\Delta^{2})$ bound on
$\varepsilon^{\ast}(\Delta)$ stated in
\cref{thm:eps_quadratic_global}.
The conversion is direct. By \cref{eq:eps_Delta_def},
$\varepsilon^{\ast}(\Delta) = \sup_{n, \theta_{n}, k}
\{g_{\Delta}(k;\, \theta_{n})\}_{+}$,
so a uniform $\ClO(\Delta^{2})$ bound on $g_{\Delta}(k)$ across
all $k$ transfers to $\varepsilon^{\ast}(\Delta)$.
The extension to all $k$ combines \cref{lem:eps_quadratic} with
the discrete concavity of $g_{\Delta}$
(\cref{lem:g_concavity}, proved in \cref{app:g_concavity}):
concavity bounds the maximiser $k^{\ast}$ of $g_{\Delta}$ by an
integer $\bar k$ independent of $\Delta$, and telescoping over
$j = 0, \ldots, k^{\ast}-1$ gives
$g_{\Delta}(k^{\ast}) = \mathcal{O}(\Delta^{2})$.

This extension requires a peakedness and curvature condition on
the demand and cost primitives, stated formally as~\labelcref{eq:cc_regime}
in \cref{thm:eps_quadratic_global}.
In the nonatomic (price-taking) limit a single user has
negligible mass, so its deviation does not perturb the slot loads,
and the bound holds for every $\Delta \in (0, \Delta_{\max}]$, with
$\Delta_{\max}$ determined by the higher-order smoothness remainder
and the telescoping slot count (\cref{app:regime}).

\begin{thm}[Quadratic upper bound on the $\varepsilon$-strategyproofness
  parameter]
  \label{thm:eps_quadratic_global}
  Set $\eta \coloneqq 1 - \mu/\nu^{\mathrm{peak}}$.
  Under \cref{asm:schedule_cost,asm:demand_density}, the
  peakedness and curvature condition
  \begin{align}
    \eta \in (0, \tfrac{1}{2}),
    \qquad
    \frac{\ddot{c}_{\min}}{\ddot{c}_{\max}}
    \;>\; \eta + \eta^{2},
    \label{eq:cc_regime}
  \end{align}
  and a smoothness threshold
  $L_{\nu} \leq c_{\nu}\,\mu/\dot{c}_{\max}$ on the demand prior
  for a positive constant $c_{\nu}$ independent of $\Delta$, there
  exist constants
  $C_{\varepsilon}, \Delta_{\max} > 0$ depending only
  on the primitives $(\nu, \mu, c)$ but not on $\Delta$ such that
  \begin{align}
    \varepsilon^{\ast}(\Delta)
    \;\leq\;
    C_{\varepsilon}\,\Delta^{2}
    \qquad
    \text{for all } \Delta \in (0, \Delta_{\max}].
    \label{eq:eps_quadratic_global}
  \end{align}
\end{thm}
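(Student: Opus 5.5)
We follow the route announced at the start of \cref{subsec:eps_bound}. By \cref{eq:eps_Delta_def}, $\varepsilon^{\ast}(\Delta)=\sup_{n,\theta_n,k}\{g_\Delta(k;\theta_n)\}_+$, so it suffices to bound $g_\Delta(k;\theta)$ by $C_\varepsilon\Delta^2$ uniformly in $\theta\in\ClS$ and $k\in\mathbb{Z}$. We fix $\theta$ and write $s_0=r_\Delta(\theta)$. In the price-taking limit the loads $d(s)$, the intervals $\Gamma^{\ast}_\Delta(s)$ and the toll $p^{\ast}_\Delta$ are fixed by the truthful population. Hence $k\mapsto C_\Delta(s_0+k\Delta;\theta)$ is a deterministic sequence, and $g_\Delta(k)=\sum_{j=0}^{k-1}\bigl[C_\Delta(s_0+j\Delta;\theta)-C_\Delta(s_0+(j+1)\Delta;\theta)\bigr]$ for $k>0$, with the symmetric formula for $k<0$. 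Each increment is a one-slot deviation evaluated from a shifted base point. This suggests the following three steps: (i) bound one-step increments by \cref{lem:eps_quadratic}, (ii) bound the optimal deviation distance using discrete concavity, and (iii) telescope.

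\textbf{Step 1 (one-step increments).} We reuse the argument behind \cref{lem:eps_quadratic}. On $\Gamma^{\ast}_\Delta(s)$ the KKT identity $c(s,t)+p^{\ast}_\Delta(t)=\lambda^{\ast}_\Delta(s)$ gives
\begin{align}
  C_\Delta(s;\theta)=\lambda^{\ast}_\Delta(s)+\frac{1}{W_\Delta(s)}\int_{\Gamma^{\ast}_\Delta(s)}\bigl[c(\theta,t)-c(s,t)\bigr]\,\mathrm{d}t .
\end{align}
The recursion \labelcref{eq:lambda_recursion} makes $\lambda^{\ast}_\Delta(s_{i+1})-\lambda^{\ast}_\Delta(s_i)$ a difference of $c$ at the common boundary. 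Expanding to second order using (C4), the first-order terms cancel; this is the toll absorbing the leading effect. The remaining terms are of two kinds. The first is a curvature term of order $\ddot c\,(\theta-s)\,\Delta\,(1-\mu/\nu)$, where the factor $1-\mu/\nu$ records the mismatch between the slot spacing $\Delta$ and the interval width $W_\Delta=d/\mu\approx\nu\Delta/\mu$. The second is a remainder controlled by $L_\nu\Delta^2$ through $|d(s_{i+1})-d(s_i)|\le L_\nu\Delta^2$. This yields a two-sided estimate for the $j$-th increment $h_j:=C_\Delta(s_0+j\Delta;\theta)-C_\Delta(s_0+(j+1)\Delta;\theta)$ of the form $|h_j|\le \ddot c_{\max}(\nu^{\mathrm{peak}}/\mu)(|j|+1)\Delta^2$. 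It also yields a sharper upper bound $h_j\le \bigl[A-jB\bigr]\Delta^2+R\Delta^3$, with $A\le\ddot c_{\max}\nu^{\mathrm{peak}}/\mu$ and $B$ proportional to $\ddot c_{\min}-(\eta+\eta^2)\ddot c_{\max}$. In the latter, $\ddot c_{\min}$ comes from the true-type cost moving away from $\theta$, and the $\eta,\eta^2$ terms come from the toll slope and interval stretching, expanded to second order in $\eta$.

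\textbf{Step 2 (concavity and the bounded maximiser).} This is \cref{lem:g_concavity}. Condition \labelcref{eq:cc_regime} makes $B>0$. The smoothness threshold $L_\nu\le c_\nu\mu/\dot c_{\max}$, together with $\Delta\le\Delta_{\max}$, ensures that the remainder $R\Delta^3$ (whose size involves $L_\nu\dot c_{\max}/\mu$) stays below $\tfrac12 B\Delta^2$. Consequently $h_{j+1}-h_j\le -\tfrac12 B\Delta^2<0$, so $g_\Delta$ is discretely concave in $k$. Since $h_j\le (A-jB/2)\Delta^2$, the increments become negative for $j\ge \bar k:=\lceil 2A/B\rceil$, which does not depend on $\Delta$. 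Hence the maximiser satisfies $|k^{\ast}|\le\bar k$. The same argument applies for $k<0$. The second role of $\Delta_{\max}$ is to guarantee $M=T/\Delta\ge 2\bar k+1$, so the telescoping never runs off the slot grid; near the edges of $\ClS$ the set of admissible $k$ is only smaller.

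\textbf{Step 3 (telescoping).} We have
\begin{align}
  g_\Delta(k^{\ast})=\sum_{j=0}^{k^{\ast}-1}h_j\le\sum_{j=0}^{\bar k-1}\ddot c_{\max}\frac{\nu^{\mathrm{peak}}}{\mu}(j+1)\Delta^2,
\end{align}
so $C_\varepsilon=\ddot c_{\max}(\nu^{\mathrm{peak}}/\mu)\bar k(\bar k+1)/2$ plus the $\mathcal{O}(\Delta^2)$ discretisation term from \cref{lem:within_slot_mean}. The main obstacle is Step 2: it requires tracking the second-order expansion of $\lambda^{\ast}_\Delta$ and of the interval averages with exact enough constants that \labelcref{eq:cc_regime} emerges as precisely the strict-concavity condition. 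The coefficient $\eta+\eta^2$ has to be verified by direct computation, first for the asymmetric quadratic cost with locally constant $\nu$, and then by perturbing in $L_\nu$.
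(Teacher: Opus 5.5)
Your proposal is correct and follows the paper's proof essentially step for step. The three steps match: the one-slot seed $|D(j)|\le \ddot{c}_{\max}(\nu^{\mathrm{peak}}/\mu)(j+1)\Delta^{2}$ from rerunning the gradient argument of \cref{lem:eps_quadratic}; a $\Delta$-independent bound $\bar k$ on the maximiser from the second-difference estimate of \cref{lem:g_concavity}; and telescoping to $C_{\varepsilon}=\tfrac{\bar k(\bar k+1)}{2}\,\ddot{c}_{\max}\,\nu^{\mathrm{peak}}/\mu$. (The extra discretisation term you add is unnecessary, since $C_\Delta$ is the canonical decision cost.)

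One bookkeeping correction concerns the order of the smoothness correction. In the paper it is $R_\nu(\Delta)=(\dot{c}_{\max}L_\nu/\mu)\Delta^{2}+\mathcal{O}(\Delta^{3})$, not $\mathcal{O}(\Delta^3)$. Its leading term is of the same order as the concavity margin $\bar\kappa_g=\Theta(\Delta^{2})$, so the threshold on $L_\nu$ is what keeps the per-step decrement positive. The condition $\Delta\le\Delta_{\max}$ only controls the $\mathcal{O}(\Delta^{3})$ part.
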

\begin{prf}
See \cref{app:proof_epsilon}.
\end{prf}

\cref{thm:eps_quadratic_global} states that, under the peakedness
and curvature condition~\labelcref{eq:cc_regime},
the worst single-vehicle misreport incentive shrinks as the
square of the slot width. Halving $\Delta$ quarters
$\varepsilon^{\ast}(\Delta)$.
This is one order tighter than the $\mathcal{O}(\Delta)$ rate that
pure Lipschitz continuity of the cost and toll would give.
The sharpening comes from the KKT condition~\labelcref{eq:KKT_flow}
of the slot-based DSO problem, which cancels the first-order
term.

The proof in \cref{app:proof_epsilon} proceeds in two steps.
The adjacent slot bound of \cref{lem:eps_quadratic} extends to
the slot level marginal gain
$D(j) \coloneqq C_{\Delta}(s^j;\theta_{n})
- C_{\Delta}(s^{j+1};\theta_{n})$ as a quadratic seed
$D(j) \leq \ddot{c}_{\max}(\nu^{\mathrm{peak}}/\mu)(j+1)\Delta^{2}$,
and the discrete concavity of $g_{\Delta}$
(\cref{lem:g_concavity}) bounds the worst misreport distance by
a $\Delta$-independent constant $\bar k$.
Telescoping the seed over $\bar k$ steps yields the global bound.
The explicit form of $C_{\varepsilon}$ and
the sufficiency analysis of the peakedness and curvature
condition~\labelcref{eq:cc_regime} are deferred to
\cref{app:regime}.

The curvature condition in~\labelcref{eq:cc_regime} is the price of
admitting asymmetric or non-quadratic schedule delay costs.
For a symmetric quadratic cost
$c(\theta, t) = \beta(\theta - t)^{2}$ the curvature is constant,
$\ddot{c}_{\min} = \ddot{c}_{\max}$, so the curvature condition
$\ddot{c}_{\min}/\ddot{c}_{\max} > \eta + \eta^{2}$ holds
automatically (as $\eta + \eta^{2} < 3/4 < 1$ on
$\eta \in (0, \tfrac{1}{2})$), and the condition reduces to the single
peakedness condition $\nu^{\mathrm{peak}}/\mu < 2$.

\section{Efficiency Analysis}
\label{sec:efficiency}

This section bounds the expected efficiency loss
$L^{\ast}(\Delta)$ of the slot-based mechanism relative to the
continuous report DSO benchmark, in parallel with the
$\varepsilon$-strategyproofness analysis of
\cref{sec:strategyproofness}.
\cref{subsec:eff_linear} defines $L^{\ast}(\Delta)$ in terms of
the continuous DSO benchmark and the slot-based true social cost,
then derives a baseline rate $\mathcal{O}(\Delta)$ from the
Lipschitz continuity of the schedule delay cost alone
(\cref{asm:schedule_cost}(C3)).
\cref{subsec:eff_quadratic} sharpens it to
$\mathcal{O}(\Delta^{2})$ by exploiting two structural
cancellations in the $\Delta$-DSO solution.

\subsection{Definition of the Efficiency Loss}
\label{subsec:eff_linear}

We compare the slot-based true social cost under truthful
reporting against the continuous DSO benchmark.
The former is the true system cost
$J(\hat{\Vttheta} \mid \Vttheta,\, \Delta)$ of
\cref{eq:true_system_cost} specialized to truthful reports
$\hat\Vttheta = \tilde\Vttheta^{\Delta}
\coloneqq (r_{\Delta}(\theta_{1}), \ldots, r_{\Delta}(\theta_{N}))$,
which we abbreviate to
$J^{\mathrm{slot}}(\Vttheta;\, \Delta)
\coloneqq J(\tilde\Vttheta^{\Delta} \mid \Vttheta,\, \Delta)$
when the dependence on the report profile is fixed at the
truthful one.
The benchmark is the optimal social cost without coarsening,
\begin{align}
  J^{\ast}(\Vttheta) \coloneqq
  \min_{\{\tau_{n}\}_{n \in \ClN}}
  \sum_{n \in \ClN} c(\theta_{n},\, \tau_{n})
  \quad\text{s.t.\ the bottleneck capacity at rate $\mu$,}
  \label{eq:J_star_def}
\end{align}
i.e., the $\Delta \to 0$ limit of $[\Delta\text{-DSO}]$ evaluated
at the unrounded profile $\Vttheta$.
Under binding capacity the optimal assignment sorts vehicles by
preferred time, $\tau(\theta) = t_{\mathrm{start}} + (N/\mu)F(\theta)$
with $F(\theta) = \tfrac{1}{N}\int_{\theta^{-}}^{\theta}\nu$ the
normalized prior. Differentiating gives the flow identity
$\nu(\theta)\,\mathrm{d}\theta = \mu\,\mathrm{d}\tau$.
The social cost then has two equivalent expressions,
\begin{align}
  J = \sum_{n \in \ClN} c(\theta_{n}, \tau_{n})
  = \int_{\ClS} \nu(\theta)\,c(\theta, \tau(\theta))\,\mathrm{d}\theta
  = \int_{\ClT} \mu\,c(\theta(t), t)\,\mathrm{d}t,
  \label{eq:J_flow_identity}
\end{align}
the same quantity weighted along the preferred-time axis (middle) or
the departure-time axis (right).
The expected efficiency loss is
\begin{align}
  L^{\ast}(\Delta)
  \coloneqq
  \int_{\ClS^N}
  \bigl[J^{\mathrm{slot}}(\Vttheta;\, \Delta) - J^{\ast}(\Vttheta)\bigr]
  \mathrm{d}\nu(\Vttheta)
  \geq 0,
  \label{eq:eff_loss_def}
\end{align}
where the integral weights the per-type loss by the prior $\nu$
(\cref{asm:demand_density}). Under the deterministic representative
assumption~(D3), $L^{\ast}(\Delta)$ is the loss at this representative
profile rather than an average over random realizations, and it scales
with the total mass $N$.

\subsection{Quadratic Upper Bound}
\label{subsec:eff_quadratic}

Under the basic assumptions plus binding capacity, the efficiency
loss is in fact $\mathcal{O}(\Delta^{2})$ in the slot width, one
order below the $\mathcal{O}(\Delta)$ rate that Lipschitz
continuity of the schedule delay cost alone would give, and
without invoking the curvature condition~\labelcref{eq:cc_regime} of
\cref{thm:eps_quadratic_global}.

\begin{thm}[Quadratic upper bound on the expected efficiency
  loss]
  \label{thm:eff_loss_quadratic}
  Under \cref{asm:schedule_cost,asm:demand_density} with the
  binding capacity condition in force, there exists a constant
  $C_{L} > 0$ depending only on the primitives $(\nu, \mu, c)$
  but not on $\Delta$ such that
  \begin{align}
    L^{\ast}(\Delta) \;\leq\; N\,C_{L}\,\Delta^{2}
    \qquad
    \text{for all } \Delta > 0.
    \label{eq:eff_loss_quadratic}
  \end{align}
\end{thm}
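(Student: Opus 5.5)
Under binding capacity with truthful reports, the assignment intervals $\Gamma^{\ast}_\Delta(s_i)$ are determined by the sorting property (\cref{lem:FIFW}) and \cref{eq:flow_analytical}. With $T = N/\mu$ and $d(s_i)=\int_{I_i}\nu$, where $I_i = [s_i-\Delta/2, s_i+\Delta/2)$, we get $\Gamma^{\ast}_\Delta(s_i) = [\tau(s_i-\Delta/2), \tau(s_i+\Delta/2)] = \tau(I_i)$. Here $\tau$ is the continuous optimal sorting map $\tau(\theta) = t_{\mathrm{start}} + (N/\mu)F(\theta)$. The slot-based assignment therefore differs from the continuous optimum only inside each slot: a type $\theta\in I_i$ is placed uniformly on $\tau(I_i)$ instead of at $\tau(\theta)$. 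I would write
\begin{align*}
J^{\mathrm{slot}} - J^{\ast}
= \sum_i \int_{I_i} \nu(\theta)\Bigl[\frac{1}{W_i}\int_{\tau(I_i)} c(\theta,t)\,\mathrm{d}t - c(\theta,\tau(\theta))\Bigr]\mathrm{d}\theta ,
\end{align*}
and use the flow identity $\nu\,\mathrm{d}\theta = \mu\,\mathrm{d}t$ from \cref{eq:J_flow_identity} to rewrite the slot contribution as a double average. On slot $i$, the continuous optimum pairs $\theta$ with $t=\tau(\theta)$, which is the comonotone coupling of the two uniform-in-mass measures on $I_i$ and $\tau(I_i)$. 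The slot mechanism instead uses the product coupling.

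The key step is a second-order expansion of $c(\theta,t)=\phi(\theta-t)$ around the slot centres, using (C1) and piecewise $C^2$ (C4). Let $\bar\theta_i$ and $\bar t_i$ be the $\nu$-weighted mean type and the mean time in slot $i$, and write $z=\theta-t$, $\bar z = \bar\theta_i-\bar t_i$. Then $\phi(z) = \phi(\bar z) + \phi'(\bar z)(z-\bar z) + R$ with $|R|\le \tfrac12 \ddot c_{\max}(z-\bar z)^2$. The constant terms coincide under both couplings, since the marginals agree. The linear terms also coincide, since $\mathbb{E}[\theta-t]=\bar\theta_i-\bar t_i$ under any coupling with these marginals. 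This is the first-order cancellation: it is the same mechanism that makes the KKT condition kill the $\ClO(\Delta)$ term. Only the remainders survive. Under either coupling, $|\theta-\bar\theta_i|\le\Delta$, and $|t-\bar t_i|\le W_i = d(s_i)/\mu \le \nu^{\mathrm{peak}}\Delta/\mu$. Hence $(z-\bar z)^2 \le (1+\nu^{\mathrm{peak}}/\mu)^2\Delta^2$, and the loss on slot $i$ is at most $d(s_i)\,\ddot c_{\max}(1+\nu^{\mathrm{peak}}/\mu)^2\Delta^2$. Summing over slots with $\sum_i d(s_i)=N$ gives $L^{\ast}(\Delta)\le N C_L\Delta^2$ with $C_L=\ddot c_{\max}(1+\nu^{\mathrm{peak}}/\mu)^2$. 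Because (D3) makes the representative profile deterministic, the prior integral in \cref{eq:eff_loss_def} reduces to this quantity. Nonnegativity is automatic, since $J^{\ast}$ is optimal.

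The main obstacle is the piecewise $C^2$ structure. For an asymmetric cost, $\phi$ has kinks (for example at $z=0$, where the curvature jumps). A Taylor remainder bound still holds if $\phi'$ is continuous and $\phi''$ is bounded by $\ddot c_{\max}$ off finitely many points. (C1) gives $c\in C^1$, so I would use the integral form $\phi(z)-\phi(\bar z)-\phi'(\bar z)(z-\bar z)=\int_{\bar z}^{z}(\phi'(u)-\phi'(\bar z))\,\mathrm{d}u$, with $|\phi'(u)-\phi'(\bar z)|\le\ddot c_{\max}|u-\bar z|$ because $\phi'$ is Lipschitz. A secondary care point is boundary slots, which cut $\ClS$ or have $d(s_i)=0$. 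These contribute nothing or are covered by the same bound, because only slots with $d(s_i)>0$ enter and each satisfies $W_i\le\nu^{\mathrm{peak}}\Delta/\mu$. For this reason no upper limit on $\Delta$ is needed, consistent with ``for all $\Delta>0$''.
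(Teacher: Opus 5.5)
Your argument is correct, and it takes a genuinely different route from the paper's.

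\textbf{The paper's route.} It first replaces each slot's mean cost by the cost at the interval centroid. It then Taylor-expands each vehicle's centroid cost around its continuous arrival time $\tau_n^{\mathrm{cont}}$. The first-order displacement splits into two parts. One is a common start-time shift $\Delta a$ between the two optima, bounded by $\Delta/2$ and annihilated by the start-time first-order condition $\sum_n c_t(\theta_n,\tau_n^{\mathrm{cont}})=0$. The other is a within-slot displacement whose slot-average part vanishes by symmetry. What remains is a covariance-type cross sum, which the paper bounds separately via the Lipschitz continuity of $c_t$.

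\textbf{Your route.} You use $T=N/\mu$ to pin both start times, which gives the exact identity $\Gamma^{\ast}_\Delta(s_i)=\tau(I_i)$. The two assignments then differ only in how identical marginals on each slot are coupled: product versus comonotone. Expanding around the common mean $\bar z$ therefore kills the zeroth- and first-order terms identically. The paper's cross sum is simply absorbed into your second-order remainder.

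\textbf{What your approach buys.} The argument is shorter and tighter. The kinks allowed by (C4) are handled cleanly through the Lipschitz bound on your $\phi'$, whereas the paper's Taylor steps are stated more informally. You need neither the start-time first-order condition, nor (D2), nor \cref{lem:within_slot_mean} with its $\ClO(\dot c_{\max}L_\nu/\mu)$ term. Your constant $\ddot c_{\max}(1+\nu^{\mathrm{peak}}/\mu)^2$ could even be halved by dropping the nonnegative comonotone remainder.

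\textbf{What the paper's approach buys.} It is built to tolerate an endogenous start time, as in \cref{sec:numerical}, where the congestion start time is optimized. In that setting your identity holds only up to a common shift $\delta a=\ClO(\Delta)$. The resulting extra linear term, proportional to $\delta a\sum_i d(s_i)\phi'(\bar z_i)$ in your notation, must then be controlled by exactly the paper's first-order condition. Since that sum differs from the first-order-condition sum by $\ClO(N\Delta)$, the term is again $\ClO(N\Delta^2)$.
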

\begin{prf}
See \cref{app:proof_eff_quadratic}.
\end{prf}

The improvement over the linear baseline rests on two structural
properties of the $\Delta$-DSO assignment.
First, the slot level rank is preserved between the continuous
DSO and the $\Delta$-DSO. The rounding map (\cref{eq:rounding})
respects the order of $\theta_{n}$ across distinct slots, and
within a slot the tie break is immaterial because vehicles share
the same report and \cref{eq:DSO_obj,eq:DSO_dem} depend only on
the slot count.
Second, the within slot arrivals are equidistant around the
centroid $\bar\tau_{i} \coloneqq
(t^{-}_\Delta(s_{i}) + t^{+}_\Delta(s_{i}))/2$, since the
capacity binding KKT condition forces the $d(s_{i})$ vehicles to
arrive at $1/\mu$ intervals inside $\Gamma^{\ast}_\Delta(s_{i})$
(\cref{subsec:individual_cost}).
Combining the two cancels the first order shift between
continuous and slot assignments, leaving only the second order
residual $\mathcal{O}(\Delta^{2})$ of the Taylor expansion.

\cref{thm:eff_loss_quadratic} is the efficiency analogue of
\cref{thm:eps_quadratic_global}. Both yield the
$\mathcal{O}(\Delta^{2})$ rate from the same cancellation,
namely the active period KKT condition together with the
equidistant within slot placement of arrivals.
The two bounds differ in their scope.
\cref{thm:eps_quadratic_global} requires the peakedness and
curvature condition~\labelcref{eq:cc_regime}, because the
strategyproofness analysis must control the maximizer of a discrete
misreport gain sequence through its concavity.
\cref{thm:eff_loss_quadratic} holds for every $\Delta > 0$ under
the basic assumptions. The efficiency cancellation invokes only
the first order condition for the continuous DSO start time,
which is available whenever the $\Delta$-DSO inherits the sorting
property of the continuous one (\cref{lem:FIFW} under full
congestion).

The constant $C_{L}$ admits the explicit form
\begin{align}
  C_{L}
  \;=\;
  \underbrace{\tfrac{1}{8}\,\ddot{c}_{\max}
    \left(1 + \tfrac{\nu^{\mathrm{peak}}}{\mu}\right)^{\!2}}_{\text{Taylor remainder}}
  \;+\;
  \underbrace{\tfrac{1}{4}\,\ddot{c}_{\max}
    \left(1 + \tfrac{\nu^{\mathrm{peak}}}{\mu}\right)
    \tfrac{\nu^{\mathrm{peak}}}{\mu}}_{\text{within slot cross sum}}
  \;+\;
  \underbrace{\mathcal{O}\!\left(\tfrac{\dot{c}_{\max}\,L_{\nu}}{\mu}\right)}_{\text{slot mean discrepancy}},
  \label{eq:C_L_explicit}
\end{align}
where $L_{\nu}$ is the Lipschitz constant of $\nu$, and all three
contributions are independent of $\Delta$ under
\cref{asm:demand_density}.
The first two terms come from the Taylor expansion in the proof
of \cref{thm:eff_loss_quadratic}, and
the third absorbs the smoothness contribution from the slot
mean discrepancy of \cref{lem:within_slot_mean}.

\cref{thm:eff_loss_quadratic} bounds the efficiency loss at the
truthful profile $\tilde\Vttheta^{\Delta}$.
The realized reporting profile need not be truthful, since vehicles
near a slot boundary may strictly prefer an adjacent slot.
Under the toll, however, the realized (equilibrium) profile stays
close to truthful. The discrete concavity of the misreport gain
(\cref{lem:g_concavity}) confines any profitable deviation to an
adjacent slot with gain at most
$\varepsilon^{\ast}(\Delta) = \ClO(\Delta^2)$, so the induced
assignment shifts by $\ClO(\Delta)$ only for vehicles within
$\ClO(\Delta)$ of a boundary.
The first-order effect of these shifts on the social cost cancels
through the same start-time first-order condition and within-slot
symmetry that underlie \cref{thm:eff_loss_quadratic}, leaving an
$\ClO(\Delta^2)$ residual.
A direct numerical evaluation confirms this. Under best response
(price-taking equilibrium) reporting, the realized efficiency loss
$J(\sigma) - J^{\ast}$ has a log-log slope of $1.99$ in $\Delta$,
matching the truthful profile rate, while the fraction of deviating
vehicles stays below $4\%$ and decreases as $\Delta$ shrinks.
This argument is heuristic. A full analytic characterization of the
coarse reporting equilibrium, a price of anarchy under coarse
reporting, is left for future work.
Without the toll the misreport incentive is $\ClO(1)$
(\cref{prop:NP_lower_bound}), the realized profile departs from the
truthful one by an $\ClO(1)$ amount, and this argument no longer
applies. The toll is what keeps the realized assignment close to the
efficient one.

\section{Numerical Experiments}
\label{sec:numerical}

This section evaluates $L^{\ast}(\Delta)$
(\cref{thm:eff_loss_quadratic}) and $\varepsilon^{\ast}(\Delta)$
(\cref{thm:eps_quadratic_global}) numerically and interprets
the magnitudes at operationally relevant slot widths.
\cref{subsec:numerical_setup} fixes the common baseline parameter
set used throughout.
\cref{subsec:numerical_eff,subsec:numerical_SP} evaluate
$L^{\ast}(\Delta)$ and $\varepsilon^{\ast}(\Delta)$ on that
baseline.
\cref{subsec:robustness} sweeps two axes
(\cref{tab:experimental_design}) that probe the peakedness and
curvature condition~\labelcref{eq:cc_regime}
of \cref{thm:eps_quadratic_global}.

\begin{table}[tbp]
  \caption{Experimental design.}
  \label{tab:experimental_design}
  \centering
  \begin{tabular}{lll}
    \toprule
    Axis & Baseline & Sweep \\
    \midrule
    Cost asymmetry $\gamma/\beta$
      & $2$ & $\{1,\, 1.5,\, 2,\, 3,\, 5\}$ \\
    Demand prior $\nu$
      & Triangular
      & Uniform, Triangular, Beta(2,5), Beta(5,2) \\
    \bottomrule
  \end{tabular}
\end{table}

\subsection{Setup}
\label{subsec:numerical_setup}
We adopt the asymmetric quadratic schedule delay cost function
\begin{align}
  c(\theta, t) = \beta\,(\theta - t)^2 \cdot \mathbf{1}_{t \leq \theta}
  + \gamma\,(t - \theta)^2 \cdot \mathbf{1}_{t > \theta},
  \qquad \gamma > \beta > 0,
  \label{eq:quadratic_cost}
\end{align}
which satisfies conditions (C1)--(C3) of \cref{asm:schedule_cost}
with Lipschitz constant
$\dot{c}_{\max} = 2\gamma T$ on $(\theta, t) \in \ClS \times \ClT$.
The baseline parameter values are listed in
\cref{tab:baseline_parameters}.

\begin{table}[tbp]
  \caption{Baseline parameter values.}
  \label{tab:baseline_parameters}
  \centering
  \begin{tabular}{l l p{0.45\textwidth}}
    \toprule
    Symbol & Value & Description \\
    \midrule
    $N$ & $720$ & Number of vehicles \\
    $\mu$ & $1.5$\,veh\,min$^{-1}$ & Bottleneck capacity \\
    $S$ & $360$\,min & Support width of $\ClS$ (6 hours) \\
    $T$ & $N/\mu = 480$\,min & Assignment window length \\
    $\nu$ & Symmetric triangular on $\ClS$
      & Preferred arrival density (peak at center,
        Lipschitz, satisfying \cref{asm:demand_density}) \\
    $\beta,\, \gamma$ & $0.3/3600,\, 0.6/3600$
      & Schedule delay coefficients (early, late) \\
    $\Delta$ & $T/M$, $M = 2, 3, \ldots$
      & Slot width (with $T/\Delta$ integer) \\
    \bottomrule
  \end{tabular}
\end{table}
The baseline macroscopic load is $N/(S\mu) \approx 1.33 > 1$, so
capacity binds throughout the active period.
The triangular prior gives $\nu^{\mathrm{peak}}/\mu \approx 2.66$,
above the peakedness condition $\nu^{\mathrm{peak}}/\mu < 2$
(the first inequality of~\labelcref{eq:cc_regime}), while the uniform
prior ($\nu^{\mathrm{peak}}/\mu \approx 1.33$) sits inside it.
The robustness sweep of \cref{subsec:robustness} shows the
$\mathcal{O}(\Delta^2)$ rate persists beyond the cut-off.
We restrict $\Delta$ to $M = T/\Delta$ integer.
\cref{fig:setup_shapes} shows the asymmetric quadratic schedule
delay cost (left) and the four preferred-arrival-time priors used
in the experiments (right, with the triangular baseline and the
others introduced in the robustness checks of
\cref{subsec:robustness}).

\begin{figure}[tbp]
  \centering
  \begin{minipage}[t]{0.48\textwidth}
    \centering
    \includegraphics[width=\textwidth]{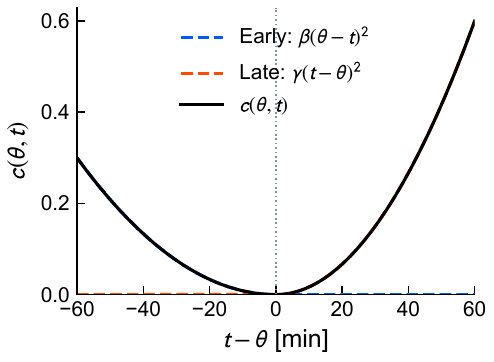}
    \subcaption{Schedule delay cost.}
    \label{fig:schedule_cost}
  \end{minipage}\hfill
  \begin{minipage}[t]{0.48\textwidth}
    \centering
    \includegraphics[width=\textwidth]{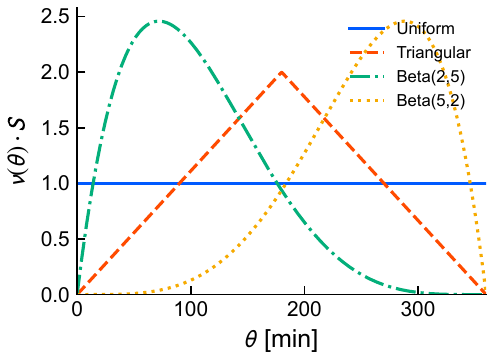}
    \subcaption{Preferred-arrival-time densities.}
    \label{fig:density_shapes}
  \end{minipage}
  \caption{Cost and demand shapes used in the experiments.}
  \label{fig:setup_shapes}
\end{figure}

For each $\Delta$, we construct $\ClS_\Delta$, numerically
optimize $\Gamma^{\ast}_\Delta$ and the congestion start time,
and obtain $\lambda^{\ast}_{\Delta}, p^{\ast}_{\Delta}$ from
\cref{eq:lambda_init,eq:lambda_recursion}.
The continuous-time DSO benchmark uses
$t^{\ast}(\theta) = t_{\mathrm{start}} + NF(\theta)/\mu$ with
$t_{\mathrm{start}}$ optimized. For the triangular prior
$J^{\ast} \approx 363.0$.
All computations are in Python (SciPy).

\subsection{Precise Evaluation of Efficiency Loss}
\label{subsec:numerical_eff}
We compute the true total cost $J^{\mathrm{slot}}(\Delta)$ by
weighting each preferred arrival time $\theta$ by the prior density
$\nu$ and averaging the schedule delay cost over the assignment
interval into which its slot is mapped:
\begin{align}
  J^{\mathrm{slot}}(\Delta) =
  \sum_{i=1}^{M} \int_{s_{i} - \Delta/2}^{s_{i} + \Delta/2}
  \nu(\theta)\,
  \frac{1}{W_{i}} \int_{t^{-}_\Delta(s_{i})}^{t^{+}_\Delta(s_{i})}
  c(\theta, t)\,\mathrm{d}t\,\mathrm{d}\theta,
\end{align}
where $W_{i} = d(s_{i})/\mu$ is the assignment interval width and
the per-slot weight is
$\int_{s_{i}-\Delta/2}^{s_{i}+\Delta/2}\nu(\theta)\,\mathrm{d}\theta
= d(s_{i})$, generally nonuniform across slots.
Within each slot the realized arrival is uniform over
$\Gamma^{\ast}_\Delta(s_{i})$.

\cref{fig:efficiency_loss} plots the numerical $L^{\ast}(\Delta)$
against the quadratic envelope $N\,C_{L}\,\Delta^{2}$ of
\cref{thm:eff_loss_quadratic}.
A log-log fit yields a slope of $2.00$, confirming the
$\ClO(\Delta^{2})$ rate and lying well below a linear
$\ClO(\Delta)$ rate.
Relative to the continuous-DSO optimum $J^{\ast}$, the loss is
$0.25\%$ at $\Delta = 6$\,min, $1.5\%$ at $15$\,min, and $6.1\%$ at
$30$\,min, so efficiency stays near-optimal well above the
sub-minute range.

The curvature jump of this cost at $\theta = t$
($2\beta \to 2\gamma$) lies on a measure-zero set and enters the
proof of \cref{thm:eff_loss_quadratic} only inside an integral over
$\theta$, so it leaves the $\ClO(\Delta^2)$ order intact.

\begin{figure}[tbp]
  \centering
  \includegraphics[width=0.42\textwidth]{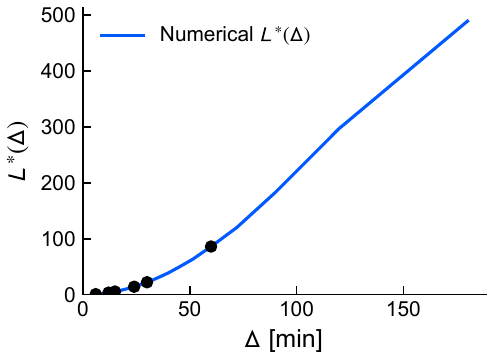}
  \caption{Efficiency loss $L^{\ast}(\Delta) = J^{\mathrm{slot}}(\Delta)
  - J^{\ast}$ versus slot width $\Delta$ under the baseline
  triangular preferred-arrival prior of \cref{tab:baseline_parameters}.
  Markers show numerical values and the solid line shows the
  theoretical $\mathcal{O}(\Delta^{2})$ bound.}
  \label{fig:efficiency_loss}
\end{figure}

\subsection{Numerical Evaluation of $\varepsilon$-Strategyproofness Parameter}
\label{subsec:numerical_SP}
To compute $\varepsilon^{\ast}(\Delta)$ numerically, for each
$\theta \in [\theta^{-}, \theta^{+}]$ we compare the truthful
reporting cost $C_\Delta(r_\Delta(\theta);\, \theta)$ with the
misreporting cost $C_\Delta(s';\, \theta)$ for every slot
$s' \in \ClS_\Delta$.
This evaluation matches the unilateral-deviation
definition (\cref{dfn:eps_SP}). All other vehicles report
truthfully and the DSO solution is computed under that profile.
The single-vehicle misreport perturbs the slot count $d(s)$ by
$\pm 1$. This finite population effect, absent in the nonatomic
(price-taking) analysis of \cref{sec:strategyproofness}, is
negligible for the parameter ranges tested ($d = 10$--$60$), as is
the $\mathcal{O}(1/\mu^2)$ quadrature floor of
\cref{lem:within_slot_mean}, which stays below the observed
$\mathcal{O}(\Delta^2)$ signal over this range.
The maximum cost reduction achievable by misreporting is:
\begin{align}
  \varepsilon^{\ast}(\Delta) = \sup_{\theta,\, s'}
  \max\bigl\{0,\;
    C_\Delta(r_\Delta(\theta);\, \theta) - C_\Delta(s';\, \theta)
  \bigr\}.
\end{align}
In the numerical evaluation, the search over $s'$ covers
\emph{all} slots in $\ClS_\Delta$, not only adjacent ones.
The optimal misreport is empirically always the adjacent slot,
which agrees with the discrete concavity of the gain sequence
established in the appendix. Under the curvature
condition~\labelcref{eq:cc_regime}, $g_{\Delta}(k)$ is strictly discretely concave in
$k$, so its maximiser $k^{\ast}$
over integer $k \ne 0$ is the slot at distance one from the
truthful slot.
The numerically computed $\varepsilon^{\ast}(\Delta)$ exhibits the
$\ClO(\Delta^2)$ rate. The prefactor
$\varepsilon^{\ast}(\Delta)/\Delta^2$ stays approximately constant
across the tested range (median $\approx 1.4 \times 10^{-5}$),
more than one and a half orders of magnitude below the adjacent-slot bound
$\ddot{c}_{\max}\,(\nu^{\mathrm{peak}}/\mu) \approx
8.9 \times 10^{-4}$ of \cref{lem:eps_quadratic}, so the rate is
tight in scaling while the constant of
\cref{thm:eps_quadratic_global} is conservative.
To assess practical significance, we normalize
$\varepsilon^{\ast}(\Delta)$ by the slot-averaged equilibrium cost
$\bar{\lambda}(\Delta) =
M^{-1}\sum_{i=1}^{M} \lambda^{\ast}_{\Delta}(s_{i})$
actually experienced by a vehicle:
\begin{align}
  \text{relative misreporting gain} =
  \frac{\varepsilon^{\ast}(\Delta)}{\bar{\lambda}(\Delta)}.
  \label{eq:relative_epsilon}
\end{align}
For an operator with indifference band $\omega$,
$\varepsilon$-strategyproofness is effectively satisfied whenever
$\varepsilon^{\ast}(\Delta)/\bar{\lambda}(\Delta) < \omega$
(\cref{fig:epsilon_relative}).

\begin{figure}[tbp]
  \centering
  \includegraphics[width=0.4\textwidth]{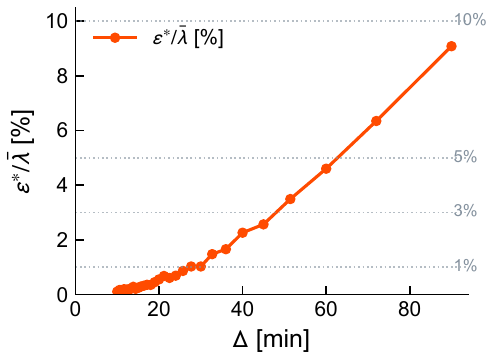}
  \caption{Relative misreporting gain
    $\varepsilon^{\ast}(\Delta)/\bar{\lambda}(\Delta)$ versus slot
    width $\Delta$, with $\bar{\lambda}(\Delta)$ the average
    equilibrium cost over slots. A smaller ratio indicates that the
    strategic incentive is small compared with the cost the
    vehicle already pays.}
  \label{fig:epsilon_relative}
\end{figure}

Operationally, the relative misreport gain
$\varepsilon^{\ast}(\Delta)/\bar\lambda(\Delta)$ stays below
$1\%$ for every $\Delta$ at or below $15$\,min and below $0.1\%$
for $\Delta \leq 6$\,min (\cref{fig:epsilon_relative}), so any
operator whose indifference band on the misreport gain exceeds
$\omega = 1\%$ accommodates calendar-friendly slot widths
without further refinement.

\subsection{Robustness Checks}
\label{subsec:robustness}
We probe the sufficiency of the peakedness and curvature
condition~\labelcref{eq:cc_regime} of
\cref{thm:eps_quadratic_global} by sweeping the two primitives
that directly enter it, namely the cost asymmetry
$\gamma/\beta$ (which controls the curvature ratio
$\ddot{c}_{\min}/\ddot{c}_{\max} = 1/(\gamma/\beta)$) and the
demand prior shape (which controls the peakedness
$\nu^{\mathrm{peak}}/\mu$).
\cref{fig:sensitivity_ratio,fig:distribution_sensitivity}
report the resulting $L^{\ast}(\Delta)$ and
$\varepsilon^{\ast}(\Delta)$ curves. The
$\mathcal{O}(\Delta^{2})$ rate is preserved on \emph{both}
performance metrics across both sweeps, including parameter
values that violate the sufficient condition~\labelcref{eq:cc_regime}.
The right panel of \cref{fig:sensitivity_ratio} plots the largest
slot width meeting a relative misreport tolerance~$\omega$,
\begin{align}
  \Delta^{\ast}(\omega) \coloneqq
  \max\{\Delta :
    \varepsilon^{\ast}(\Delta)/\bar{\lambda}(\Delta) \le \omega\},
  \label{eq:Delta_star}
\end{align}
which converts the asymptotic rate into an operational slot width
recommendation for a given indifference band~$\omega$.

\begin{figure}[tbp]
  \centering
  \begin{subfigure}[t]{0.95\textwidth}
    \centering
    \includegraphics[width=\linewidth]{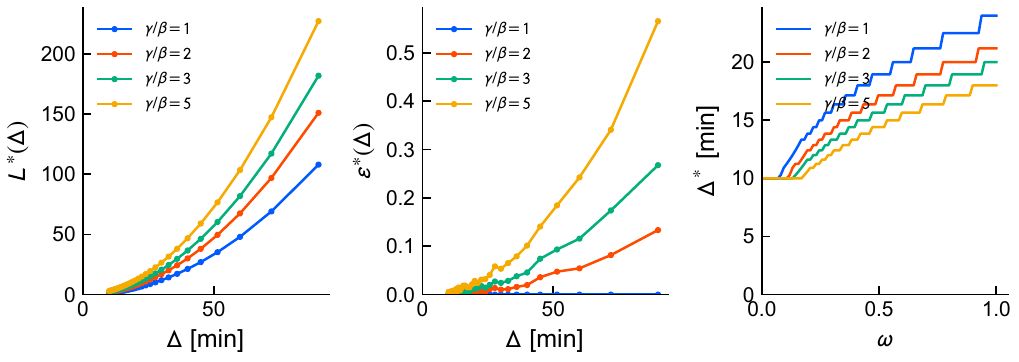}
    \subcaption{Cost asymmetry $\gamma/\beta$.}
    \label{fig:sensitivity_ratio}
  \end{subfigure}\\[0.5ex]
  \begin{subfigure}[t]{0.95\textwidth}
    \centering
    \includegraphics[width=\linewidth]{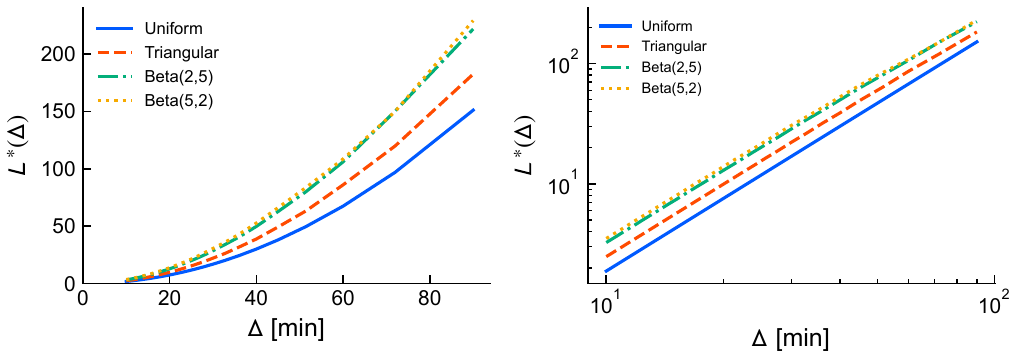}
    \subcaption{Demand prior shape.}
    \label{fig:distribution_sensitivity}
  \end{subfigure}
  \caption{Sensitivity sweeps probing the peakedness and curvature
  condition~\labelcref{eq:cc_regime} of
  \cref{thm:eps_quadratic_global}.
  Each row varies one parameter per the design of
  \cref{tab:experimental_design}, holding the others at the baseline
  of \cref{tab:baseline_parameters}.
  The left columns show $L^{\ast}(\Delta)$ and
  $\varepsilon^{\ast}(\Delta)$ on log-log axes, and the right columns
  show $\Delta^{\ast}(\omega)$ from
  \cref{eq:Delta_star}.}
  \label{fig:sensitivity_combined}
\end{figure}

The cost asymmetry sweep covers
$\ddot{c}_{\min}/\ddot{c}_{\max} \in \{1, 2/3, 1/2, 1/3, 1/5\}$,
and the slope-$2$ behavior persists at $\gamma/\beta = 5$
($\ddot{c}_{\min}/\ddot{c}_{\max} = 0.2$), well below the
sufficient threshold of \cref{eq:cc_regime}.
The prior shape sweep places
$\nu^{\mathrm{peak}}/\mu \approx 2.66$ for triangular and
$\approx 3.3$ for the Beta priors, both violating
$\nu^{\mathrm{peak}}/\mu < 2$.
The empirical log-log slopes are $2.00$ (uniform),
$1.98$ (triangular), $1.94$ (Beta(2,5)), and $1.91$ (Beta(5,2)).
The $\mathcal{O}(\Delta^{2})$ rate therefore persists well
beyond the sufficient condition~\labelcref{eq:cc_regime}, consistent with the cancellation of
\cref{eq:KKT_flow} still operating when the Taylor expansion of
the slot mean cost does not close.
A formal extension of the discrete concavity result
(\cref{lem:g_concavity}) to $\nu^{\mathrm{peak}}/\mu \ge 2$ is
left for future work.

\section{Discussion}
\label{sec:discussion}

This section interprets the main results.
\cref{subsec:rate_implications} discusses the operational
consequences of the matching $\mathcal{O}(\Delta^{2})$ rates on
strategyproofness and efficiency.
\cref{subsec:role_of_tolling} establishes that the toll is
indispensable for those rates. Without it, the strategyproofness
gap remains bounded away from zero regardless of how fine the
slot grid becomes.

\subsection{Operational Implications of the Quadratic Rates}
\label{subsec:rate_implications}

The matching $\mathcal{O}(\Delta^{2})$ rates of
\cref{thm:eps_quadratic_global,thm:eff_loss_quadratic} make
$\Delta$ the only operational parameter that the operator needs
to tune.
Halving $\Delta$ quarters both $\varepsilon^{\ast}(\Delta)$ and
$L^{\ast}(\Delta)$, so refining the slot grid improves
strategyproofness and efficiency at the same rate, with no
tradeoff between the two over this range.

The quadratic decay reaches operationally acceptable levels at
moderate $\Delta$.
At the baseline of \cref{tab:baseline_parameters},
$\Delta = 12$\,min (30 active slots spanning $\ClS$) brings the efficiency loss
to $1.0\%$ of $J^{\ast}$ (\cref{fig:efficiency_loss}) and the relative
misreport gain $\varepsilon^{\ast}(\Delta)/\bar\lambda(\Delta)$
to below $1\%$ (\cref{fig:epsilon_relative}).
The operator need not push $\Delta$ to fractions of a minute to
obtain an assignment that is close to incentive compatible and
close to efficient.

The operational slot width $\Delta^{\ast}(\omega)$ of
\cref{eq:Delta_star} converts this rate into a concrete sizing
rule.
For an indifference band $\omega$ on the relative misreport gain,
the largest admissible $\Delta$ inherits the
$\Delta^{\ast}(\omega) \propto \sqrt{\omega}$ scaling from the
$\Delta^{2}$ rate, and the robustness sweeps of
\cref{fig:sensitivity_ratio,fig:distribution_sensitivity} show
that this sizing rule survives variation in the cost asymmetry
and the demand prior shape, including parameter values outside
the peakedness and curvature condition~\labelcref{eq:cc_regime} of
\cref{thm:eps_quadratic_global}.

\subsection{Role of Tolling}
\label{subsec:role_of_tolling}
The toll~$p^{\ast}_\Delta(t)$ is essential for
the quadratic bound on $\varepsilon^{\ast}(\Delta)$.
The following proposition shows that without tolling the
misreporting gain remains bounded away from zero.

Consider the \emph{no-toll} (NT) variant in which the individual cost
is defined by the schedule delay cost alone:
\begin{align}
  C^{\mathrm{NT}}_\Delta(\hat{\theta}_{n};\, \theta_{n})
  \coloneqq \frac{1}{W}\int_{\Gamma^{\ast}_\Delta(\hat{\theta}_{n})}
  c(\theta_{n}, t)\,\mathrm{d}t,
  \label{eq:cost_NP}
\end{align}
and define the corresponding strategyproofness parameter,
consistently with \cref{eq:eps_Delta_def}, as
\begin{align}
  \varepsilon^{\ast,\mathrm{NT}}(\Delta)
  \coloneqq
  \sup_{n, \theta_{n}, \hat{\theta}_{n}}
  \bigl\{
    C^{\mathrm{NT}}_\Delta(r_{\Delta}(\theta_{n});\, \theta_{n})
    - C^{\mathrm{NT}}_\Delta(\hat{\theta}_{n};\, \theta_{n})
  \bigr\}_{+}.
  \label{eq:eps_NP_def}
\end{align}

\begin{pro}[Persistence of misreporting incentive without tolling]
  \label{prop:NP_lower_bound}
  Suppose that some vehicle~$n$ occupies a congested slot
  ($d(s_{i_{n}}) > \mu\Delta$, i.e., capacity binds at $s_{i_{n}}$)
  with preferred arrival time $\theta_{n}$ bounded away from
  $T/2$ by a fixed margin independent of $\Delta$.
  Then there exists a constant $c_{0} > 0$ depending on that
  margin but independent of~$\Delta$ such that for all
  sufficiently small $\Delta > 0$,
  \begin{align}
    \varepsilon^{\ast,\mathrm{NT}}(\Delta) \geq c_{0}.
    \label{eq:eps_NP_bound}
  \end{align}
\end{pro}

\begin{prf}
See \cref{app:proof_NP_lower_bound}.
\end{prf}

With the toll, \cref{thm:eps_quadratic_global} gives
$\varepsilon^{\ast}(\Delta) = O(\Delta^2)$.
Without it, the first-order cost variation is not priced and
\cref{prop:NP_lower_bound} gives
$\varepsilon^{\ast,\mathrm{NT}}(\Delta) \geq c_{0} > 0$ for all
sufficiently small $\Delta$.
The toll and the slot menu are therefore \emph{complementary}:
the $\ClO(\Delta^{2})$ guarantee emerges only when both are in
place (\cref{fig:role_of_tolling}).
The toll's role here is elicitation rather than
Pigovian externality internalization. The operator enforces the
assignment directly, so the toll is not needed to induce the
system optimum.
Instead, it makes truthful slot reporting incentive compatible
under coarse reports.
The efficiency loss $L^{\ast}(\Delta)$ of
\cref{thm:eff_loss_quadratic} is evaluated at the \emph{truthful}
profile. Because the toll is a transfer that cancels in the social
cost~$J$, this truthful profile loss is $\ClO(\Delta^2)$ whether or
not the toll is levied.
The toll's bearing on efficiency is therefore indirect but decisive:
without it the misreporting incentive persists
(\cref{prop:NP_lower_bound}), truthful reporting is not elicited, and
the realized assignment departs from the efficient one, so the toll
secures realized efficiency and not only incentive compatibility.

\begin{figure}[tbp]
  \centering
  \begin{tikzpicture}[
    box/.style={draw, rounded corners=3pt, minimum width=3.0cm,
                minimum height=0.9cm, align=center,
                font=\small\sffamily},
    arr/.style={->, thick, >=stealth},
    every node/.style={font=\small},
  ]
    \node[font=\bfseries\sffamily] at (0, 2.6) {(a) With tolling};

    \node[box, fill=blue!8] (delta1) at (0, 1.5)
      {Slot width $\Delta$};
    \node[box, fill=black!6] (eps1) at (-2.2, -0.3)
      {$\varepsilon^{\ast}(\Delta) = \ClO(\Delta^{2})$};
    \node[box, fill=black!6] (eff1) at ( 2.2, -0.3)
      {$L^{\ast}(\Delta) = \ClO(\Delta^{2})$\\[-1pt]{\scriptsize(realized)}};

    \draw[arr, blue!70!black] (delta1) -- (eps1)
      node[midway, left, font=\footnotesize] {vanishes};
    \draw[arr, blue!70!black] (delta1) -- (eff1)
      node[midway, right, font=\footnotesize] {vanishes};

    \node[font=\bfseries\sffamily] at (8, 2.6) {(b) Without tolling};

    \node[box, fill=red!8] (delta2) at (8, 1.5)
      {Slot width $\Delta$};
    \node[box, fill=red!10] (eps2) at (5.8, -0.3)
      {$\varepsilon^{\ast,\mathrm{NT}}(\Delta) \geq c_{0}$};
    \node[box, fill=black!6] (eff2) at (10.2, -0.3)
      {$L^{\ast}(\Delta) = \ClO(\Delta^{2})$\\[-1pt]{\scriptsize(truthful profile)}};

    \draw[arr, red!70!black] (delta2) -- (eps2)
      node[midway, left, font=\footnotesize] {persists};
    \draw[arr, blue!70!black] (delta2) -- (eff2)
      node[midway, right, font=\footnotesize] {vanishes};

    \draw[gray, dashed] (4, -1.0) -- (4, 3.0);
  \end{tikzpicture}
  \caption{Role of the toll. With tolling (a), the misreporting
  incentive vanishes ($\varepsilon^{\ast} = \ClO(\Delta^2)$) and the
  efficient assignment is realized. Without tolling (b), it persists
  ($\varepsilon^{\ast,\mathrm{NT}} \geq c_0$), so truthful reporting is
  not elicited, even though the truthful profile loss remains
  $\ClO(\Delta^2)$. The toll is thus essential for realized efficiency.}
  \label{fig:role_of_tolling}
\end{figure}

As $\Delta \to 0$, the managed lane toll
$p^{\ast}_{\Delta}$
defined by~\cref{eq:toll_expression} converges pointwise to the
classical continuous-report congestion toll
\citep{Vickrey1969-ic,ArnottDePalmaLindsey1994-wc,Small2015-lc}
via Lipschitz continuity of $\lambda^{\ast}_{\Delta}$ in $s$ and
$c$ in $\theta$ together with $r_{\Delta}(\theta) \to \theta$.

\cref{fig:tolling_multiload} supports these bounds numerically
across three load levels
$N/(S\mu) \in \{1.11, 1.33, 1.67\}$ obtained by varying $N$ at
fixed $\mu$ and $S$.
With tolling, $\varepsilon^{\ast}(\Delta)$ lies below the
$H_{\varepsilon}\Delta^{2}$ envelope at every load.
Without tolling, $\varepsilon^{\ast,\mathrm{NT}}(\Delta)$ tracks
the theoretical floor $c_{0} = \beta D_{n}^{2}/2$, which scales as
$(N/(S\mu) - 1)^{2}$ and grows from $\approx 0.033$ at
$N/(S\mu) = 1.11$ to $\approx 1.2$ at $N/(S\mu) = 1.67$.
The strict separation between the tolled and untolled cases is therefore
robust to the choice of load.

\begin{figure}[tbp]
  \centering
  \includegraphics[width=0.95\textwidth]{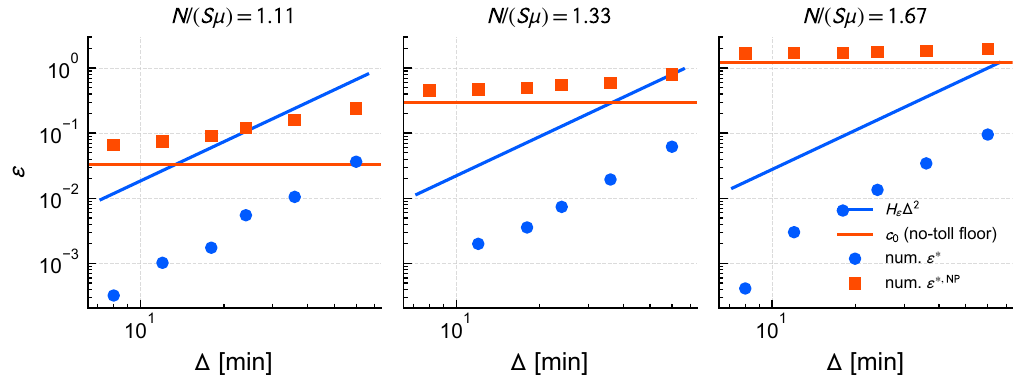}
  \caption{Tolling vs.\ no-tolling at three load levels
  $N/(S\mu) \in \{1.11, 1.33, 1.67\}$.
  Blue curves and markers show the tolled mechanism with the
  $H_{\varepsilon}\Delta^{2}$ envelope, and red curves and markers
  show the no-tolling variant with the
  $c_{0}$ floor.
  The strict separation between the tolled and untolled cases holds at every
  load.}
  \label{fig:tolling_multiload}
\end{figure}

\section{Conclusion}
\label{sec:conclusion}

We have analyzed the slot-based reporting mechanism for
system-optimal bottleneck assignment on a
managed lane,
characterizing the worst-case strategyproofness parameter
$\varepsilon^{\ast}(\Delta)$ and the expected efficiency loss
$L^{\ast}(\Delta)$ as functions of the slot width $\Delta$.
The main results are
$\varepsilon^{\ast}(\Delta) \leq C_{\varepsilon}\,\Delta^{2}$
for every $\Delta \in (0, \Delta_{\max}]$ under the peakedness and
curvature condition~\labelcref{eq:cc_regime} (\cref{thm:eps_quadratic_global}) and
$L^{\ast}(\Delta) \leq N\,C_{L}\,\Delta^{2}$ for every
$\Delta > 0$ under binding capacity
(\cref{thm:eff_loss_quadratic}), with no curvature condition required.
Without the toll, the misreporting incentive remains
$\mathcal{O}(1)$ regardless of slot refinement
(\cref{prop:NP_lower_bound}).

The framework extends to other congested systems with finite
capacity and scheduling preferences, including airport runway
scheduling and port berth allocation.
Two natural extensions are heterogeneous users (multiple
value-of-time or schedule-delay classes
\citep{ArnottDePalmaLindsey1994-wc, Lindsey2004-mu,
vandenBergVerhoef2011-wd, Akamatsu2021-di}) and corridor
networks with tandem bottlenecks
\citep{Fu2022-st, Sakai2024-wy}. Both require generalizations of
the sorting structure and the cross-bottleneck propagation
respectively.

\section*{Acknowledgements}
This work was supported by JSPS KAKENHI Grant Numbers
JP26K17486 and JP25H00751.

\section*{Declaration of generative AI use}
During the preparation of this work the authors used the Claude
and GPT model families for coding and writing assistance.
After using these tools, the authors reviewed and edited
the content as needed and take full responsibility for the
content of the published article.

\newpage
\begin{APPENDIX}{}

\section{Notation Summary}
\label{app:notation}

\cref{tab:notation} lists the main symbols used throughout this
paper.

\begin{table}[tbp]
  \centering
  \caption{Summary of notation.}
  \label{tab:notation}
\begin{tabular}{cl}
  \toprule
  Symbol & Description \\
  \midrule
  \multicolumn{2}{l}{\textit{Model primitives (exogenous)}} \\
  \addlinespace[0.2em]
  $\ClN = \{1, \ldots, N\}$ & Set of users (vehicles) \\
  $\ClT = [0, T]$ & Assignment window (where actual arrivals are scheduled) \\
  $T$ & Length of the assignment window $\ClT$ ($\Delta \mid T$) \\
  $\ClS = [\theta^{-}, \theta^{+}] \subseteq \ClT$ & Support of preferred arrival times \\
  $S = \theta^{+} - \theta^{-}$ & Width of the support $\ClS$ \\
  $\mu$ & Bottleneck capacity \\
  $f$ & Free-flow travel time \\
  $\nu(\theta)$ & Density of preferred arrival times on $\ClS$ (\cref{asm:demand_density}) \\
  $c(\theta, t)$ & Schedule delay cost function \\
  $\dot{c}_{\max}$ & Lipschitz constant of $c$: $\sup |\partial c/\partial\theta|$ (\cref{asm:schedule_cost}(C3)) \\
  $\ddot{c}_{\max},\; \ddot{c}_{\min}$ & Upper/lower bounds on $\partial^2 c/\partial\theta^2$ (\cref{asm:schedule_cost}(C4)) \\
  \midrule
  \multicolumn{2}{l}{\textit{Reports and slot structure}} \\
  \addlinespace[0.2em]
  $\theta_{n} \in \ClS$ & True preferred arrival time of vehicle $n$ (private information) \\
  $\Vttheta = (\theta_{1}, \ldots, \theta_{N})$ & Profile of true preferred arrival times \\
  $\hat{\theta}_{n} \in \ClS_\Delta$ & Reported preferred arrival time of vehicle $n$ \\
  $\Delta$ & Slot width (a divisor of $T$) \\
  $\ClS_\Delta = \{s_{1}, \ldots, s_{M}\}$ & Set of admissible slot reports (covering $\ClT$) \\
  $M = T/\Delta$ & Number of slots \\
  $r_\Delta(\theta)$ & Mapping from $\theta$ to its nearest slot (\cref{subsec:mechanism}) \\
  \midrule
  \multicolumn{2}{l}{\textit{Demand structure (derived from reports)}} \\
  \addlinespace[0.2em]
  $d(s_{i})$ & Vehicles reporting slot $s_{i}$ under profile $\hat\Vttheta$ (\cref{eq:demand}) \\
  $\nu^{\mathrm{peak}} = \max_{\theta \in \ClS} \nu(\theta)$ & Peak arrival rate of $\nu$ (\cref{eq:nu_peak_def}) \\
  \midrule
  \multicolumn{2}{l}{\textit{Mechanism solution ($\Delta$-DSO)}} \\
  \addlinespace[0.2em]
  $q_\Delta(s, t)$ & Flow rate assigned to arrival time $t$ among vehicles reporting slot $s$ \\
  $\Gamma^{\ast}_\Delta(s)$ & Arrival-time interval assigned to slot $s$ \\
  $W_\Delta(s_{i}) = d(s_{i})/\mu$ & Assignment interval width for slot $s_{i}$ \\
  $\lambda^{\ast}_\Delta(s)$ & Equilibrium cost for slot $s$ (multiplier of demand-conservation constraint) \\
  $p^{\ast}_\Delta(t)$ & Time-dependent managed lane toll \\
  $L_{p}$ & Lipschitz constant of the toll $p^{\ast}_\Delta(t)$ \\
  \midrule
  \multicolumn{2}{l}{\textit{Cost evaluation and performance metrics}} \\
  \addlinespace[0.2em]
  $C_\Delta(\hat{\theta}_{n};\, \theta_{n})$ & Ex-ante expected cost of a price-taking user (within-slot mean, \cref{eq:individual_cost}) \\
  $\varepsilon^{\ast}(\Delta)$ & Tightest $\varepsilon$-strategyproofness parameter \\
  $L^{\ast}(\Delta)$ & Expected efficiency loss (\cref{eq:eff_loss_def}) \\
  \bottomrule
\end{tabular}
\end{table}

\section{Proof Details}
\label{app:proofs}

Throughout this appendix we use the slot occupancy
$d(s) = \#\{n : \hat{\theta}_{n} = s\}$ under the report
profile, equal to $\int_{r_\Delta^{-1}(s)} \nu(\theta)\,\mathrm{d}\theta$
under truthful reporting, and the corresponding assignment
interval width
\begin{align}
  W_\Delta(s) \coloneqq \frac{d(s)}{\mu}.
  \label{eq:Wdelta_def}
\end{align}
Both depend on the slot width $\Delta$ and on the reported
preferred arrival time distribution through $d(s)$. The
peak truthful occupancy satisfies
$\max_{s} d(s)/(\mu\Delta) \leq \nu^{\mathrm{peak}}/\mu$.
We also use the shorthand
\begin{align}
  \phi(z) \coloneqq \frac{(z-1)(2z-1)}{z^{2}}
  \label{eq:phi_def}
\end{align}
for the curvature-ratio threshold of \cref{eq:cc_regime}.

\subsection{Proof of \cref{lem:FIFW}}
\label{app:proof_FIFW}

\begin{prf}
  Suppose, for the sake of contradiction, that there exist two
  pairs $(s, t)$ and $(s', t')$ with $s < s'$ and $t > t'$
  (i.e., a violation of the sorting property) such that
  $q_{\Delta}(s,t) > 0$ and $q_{\Delta}(s',t') > 0$.
  Define
  $\delta \coloneqq \min\{q_{\Delta}(s,t),\, q_{\Delta}(s',t')\} > 0$
  and reassign $\delta$ units of flow
  from $(s,t)$ and $(s',t')$ to $(s,t')$ and $(s',t)$, respectively.
  The resulting change in the objective function is
  \begin{align}
    \delta\bigl[c(s, t') + c(s', t) - c(s, t) - c(s', t')\bigr].
  \end{align}
  By the submodularity condition \labelcref{eq:submodular}, because
  $s < s'$ and $t' < t$ we have
  $c(s, t') + c(s', t) \leq c(s, t) + c(s', t')$,
  so the above change is non-positive.
  That is, the reassignment reduces (or leaves unchanged) the
  objective function value.
  Hence any assignment that violates the sorting property is not optimal,
  and the optimal solution obeys the sorting property.
\end{prf}

\subsection{Proof of \cref{lem:toll_Lip}}
\label{app:proof_toll_Lip}

\begin{prf}
  From \cref{eq:toll_expression},
  $p^{\ast}_{\Delta}(t) = \lambda^{\ast}_{\Delta}(\slotmap(t)) - c(\slotmap(t), t)$.

  \textit{Interior of each interval:}
  Under the sorting property, within each assignment interval
  $\Gamma^{\ast}_{\Delta}(s) = [t^{-}_\Delta(s), t^{+}_\Delta(s)]$
  we have $\slotmap(t) = s$ (the assigned slot is constant), so
  for any $t, t' \in \Gamma^{\ast}_\Delta(s)$,
  \begin{align}
    |p^{\ast}_\Delta(t) - p^{\ast}_\Delta(t')| = |c(s, t') - c(s, t)|.
  \end{align}
  By condition (C1) of \cref{asm:schedule_cost}, the schedule delay cost
  $c(s, \cdot)$ is continuously differentiable, and on the bounded
  domain $\ClT$ there exists a Lipschitz constant $L_{p}$ with respect
  to $t$, giving
  $|p^{\ast}_\Delta(t) - p^{\ast}_\Delta(t')| \leq L_{p} |t - t'|$.

  \textit{Equilibrium cost difference:}
  At a boundary time
  $t_{b} = t^{+}_\Delta(s_{i}) = t^{-}_\Delta(s_{i+1})$,
  the optimality condition~\labelcref{eq:KKT_flow} holds simultaneously
  for $s_{i}$ and $s_{i+1}$, giving
  $\lambda^{\ast}_\Delta(s_{i+1}) - \lambda^{\ast}_\Delta(s_{i})
  = c(s_{i+1}, t_{b}) - c(s_{i}, t_{b})$.
  By~(C3), $|\lambda^{\ast}_\Delta(s_{i+1}) - \lambda^{\ast}_\Delta(s_{i})|
  \leq \dot{c}_{\max} \Delta$.
  Recursive application yields
  $|\lambda^{\ast}_\Delta(s_{j}) - \lambda^{\ast}_\Delta(s_{i})|
  \leq \dot{c}_{\max} |s_{j} - s_{i}|$ for all $i, j$.

  \textit{Continuity at interval boundaries:}
  The identity above implies
  $p^{\ast}_\Delta(t_{b})
    = \lambda^{\ast}_\Delta(s_{i}) - c(s_{i}, t_{b})
    = \lambda^{\ast}_\Delta(s_{i+1}) - c(s_{i+1}, t_{b})$,
  so the toll is continuous at each boundary.
  Combining the Lipschitz continuity within each interval with
  continuity at the boundaries, Inequality~\labelcref{eq:toll_Lip} holds globally
  with $L_{p} = \dot{c}_{\max}$.
\end{prf}

\subsection{Proof of \cref{lem:within_slot_mean}}
\label{app:proof_within_slot_mean}

\begin{prf}
  Let $f(t) \coloneqq c(\theta, t) + p^{\ast}_\Delta(t)$ on
  $\Gamma^{\ast}_\Delta(s) = [a, a + W_\Delta(s)]$ with
  $W_\Delta(s) = d(s)/\mu$.
  Within $\Gamma^{\ast}_\Delta(s)$, both $c(\theta, \cdot)$ and
  $p^{\ast}_\Delta$ are piecewise $C^2$ in $t$:
  $|c_{tt}| \leq \ddot{c}_{\max}$ by~(C1) and~(C4) of
  \cref{asm:schedule_cost}, and
  $|(p^{\ast}_\Delta)''| \leq \ddot{c}_{\max}$ by the toll
  formula~\labelcref{eq:toll_expression} (the toll inherits the second
  derivative bound from $c$).
  Hence $|f''| \leq 2 \ddot{c}_{\max}$ on the interior of
  $\Gamma^{\ast}_\Delta(s)$, with possible isolated kinks at the
  $C^2$-partition boundaries of \cref{asm:schedule_cost}(C4).

  Partition $\Gamma^{\ast}_\Delta(s)$ into $d(s)$ subintervals
  $I_{k} = [a + (k-1)/\mu,\, a + k/\mu]$ of length $1/\mu$ each,
  with midpoints $\tau_{k} = a + (k - 1/2)/\mu$.
  By the midpoint quadrature error formula on each $I_{k}$,
  \begin{align}
    \int_{I_{k}} f(t)\,\mathrm{d}t
    = \frac{f(\tau_{k})}{\mu} + \frac{f''(\xi_{k})}{24\mu^3},
    \qquad \xi_{k} \in I_{k},
  \end{align}
  for each $C^2$ subinterval.
  Summing over $k = 1, \ldots, d(s)$ and dividing by $W_\Delta(s) = d(s)/\mu$,
  \begin{align}
    \frac{1}{W_\Delta(s)} \int_{\Gamma^{\ast}_\Delta(s)} f(t)\,\mathrm{d}t
    = \frac{1}{d(s)} \sum_{k=1}^{d(s)} f(\tau_{k})
      + \frac{1}{d(s)} \sum_{k=1}^{d(s)}
        \frac{f''(\xi_{k})}{24\mu^2}.
  \end{align}
  The remainder is bounded by
  \begin{align}
    \left|\frac{1}{d(s)} \sum_{k=1}^{d(s)} \frac{f''(\xi_{k})}{24\mu^2}\right|
    \leq \frac{2 \ddot{c}_{\max}}{24\mu^2}
    = \frac{\ddot{c}_{\max}}{12\mu^2}.
  \end{align}
  When the slot contains at least one vehicle, $1 \leq d(s) \leq
  \nu^{\mathrm{peak}}\Delta$, so $1/\mu \leq (\nu^{\mathrm{peak}}/\mu)\Delta$
  and the remainder is
  $\mathcal{O}\bigl((\nu^{\mathrm{peak}}/\mu)^{2}\ddot{c}_{\max}\Delta^{2}\bigr)
  = \mathcal{O}(\Delta^{2})$ uniformly in $s, \theta$.
  This discrete reinterpretation requires $d(s) \geq 1$, i.e.,
  $\Delta \geq 1/\nu^{\mathrm{peak}}$, and the bound $\ddot{c}_{\max}/(12\mu^2)$
  is otherwise a $\Delta$-independent floor.
  In the nonatomic (fluid) model used for the main results the
  decision cost is the continuous within-slot mean $C_\Delta$ of
  \cref{eq:individual_cost}, whose deviation from the centroid value
  is the genuine $\ClO(\ddot{c}_{\max}\Delta^{2})$ within-slot variance
  term. The $\ClO(1/\mu^{2})$ floor here is a finite-population
  artifact of the discrete $1/\mu$-spaced placement.
  Kinks of $c$ at the $C^2$-partition boundaries contribute at
  most a $\Delta$-independent finite number (the partition
  cardinality given by~(C4) of \cref{asm:schedule_cost}) of
  additional $\mathcal{O}(1/\mu^2) = \mathcal{O}(\Delta^2)$
  corrections, which are absorbed into the same residual.
\end{prf}

\subsection{Proof of \cref{lem:eps_quadratic}}
\label{app:proof_eps_quadratic}

\begin{prf}
We bound the adjacent-slot gain
$g_{\Delta}(+1) = C_\Delta(s_{i};\, \theta_{n})
- C_\Delta(s_{i+1};\, \theta_{n})$ in two steps:
(i) bound the slotwise gradient of the generalized cost
$G = c + p^{\ast}_\Delta$, using that the toll cancels the
schedule delay cost gradient on each slot's own interval, and
(ii) integrate this gradient bound over the adjacent intervals
to obtain $|g_{\Delta}(+1)| = \ClO(\Delta^{2})$.
The $-1$ direction is symmetric.
Here $s_{i} = r_\Delta(\theta_{n})$ and
$|\theta_{n} - s_{i}| \leq \Delta/2$.

\textit{Step 1: Slotwise gradient bound.}
Define $G(\theta, t) \coloneqq c(\theta, t) + p^{\ast}_\Delta(t)$.
Within each assignment interval $\Gamma^{\ast}_\Delta(s)$,
the optimality condition~\labelcref{eq:KKT_flow} gives
$p^{\ast}_\Delta(t) = \lambda^{\ast}_\Delta(s) - c(s, t)$, so
\begin{align}
  G_{t}(\theta, t)
  = c_{t}(\theta, t) - c_{t}(s, t),
  \qquad t \in \Gamma^{\ast}_\Delta(s).
  \label{eq:Gt}
\end{align}
At $\theta = s$, this vanishes identically because the toll cancels the
schedule delay cost gradient on the slot's own interval.
By the piecewise $C^2$ regularity ((C4) of \cref{asm:schedule_cost}) and
$|c_{tt}| = |f''| \leq \ddot{c}_{\max}$, for any $\theta$ and any $s$,
\begin{align}
  |G_{t}(\theta, t)|
  \;\leq\; \ddot{c}_{\max}\,|\theta - s|,
  \qquad t \in \Gamma^{\ast}_\Delta(s).
  \label{eq:Gt_bound}
\end{align}
Writing $\delta \coloneqq \theta_{n} - s_{i}$ with $|\delta| \leq \Delta/2$,
this yields two slot-dependent bounds:
$|G_{t}(\theta_{n}, t)| \leq \ddot{c}_{\max}|\delta| \leq \ddot{c}_{\max}\Delta/2$
for $t \in \Gamma_{i}$, and
$|G_{t}(\theta_{n}, t)| \leq \ddot{c}_{\max}|\delta - \Delta|
  \leq \tfrac{3}{2} \ddot{c}_{\max}\Delta$
for $t \in \Gamma_{i+1}$.

\textit{Step 2: Adjacent-interval mean difference.}
The individual cost is the integral mean of~$G$ over the
assignment interval:
$C_\Delta(s;\, \theta) = \frac{1}{W}\int_{\Gamma^{\ast}_\Delta(s)}
G(\theta, t)\,\mathrm{d}t$, where $W = W_\Delta(s)$.
Let $\Gamma_{i} = [a_{i}, a_{i} + W]$ and
$\Gamma_{i+1} = [a_{i} + W, a_{i} + 2W]$ be two adjacent intervals.
Changing variables in the second integral gives
\begin{align}
  g_{\Delta}(+1) \;=\;
  \frac{1}{W}\int_{0}^W
  \bigl[G(\theta_{n}, a_{i} {+} u)
  - G(\theta_{n}, a_{i} {+} W {+} u)\bigr]\,\mathrm{d}u.
  \label{eq:mean_diff}
\end{align}
For each fixed $u \in [0, W]$, the two sample points $a_{i} + u$ and
$a_{i} + W + u$ straddle the slot boundary $a_{i} + W$, with the first lying
in $\Gamma_{i}$ and the second in $\Gamma_{i+1}$.
Applying the fundamental theorem of calculus
along the segment $[a_{i} + u,\, a_{i} + W + u]$ and using the
slotwise gradient bounds~\labelcref{eq:Gt_bound},
\begin{align}
  &\bigl|G(\theta_{n}, a_{i} {+} u) - G(\theta_{n}, a_{i} {+} W {+} u)\bigr|
  \notag\\
  &\quad\leq
  \int_{a_{i} + u}^{a_{i} + W} |G_{t}(\theta_{n}, \tau)|\,\mathrm{d}\tau
  + \int_{a_{i} + W}^{a_{i} + W + u} |G_{t}(\theta_{n}, \tau)|\,\mathrm{d}\tau
  \notag\\
  &\quad\leq
  (W - u)\cdot\frac{\ddot{c}_{\max}\Delta}{2}
  + u\cdot\frac{3 \ddot{c}_{\max}\Delta}{2}
  \;=\; \ddot{c}_{\max}\Delta\Bigl(\frac{W}{2} + u\Bigr).
  \label{eq:pointwise_diff}
\end{align}
Integrating over $u \in [0, W]$ and dividing by $W$,
\begin{align}
  |g_{\Delta}(+1)|
  \;\leq\;
  \frac{1}{W}\int_{0}^W \ddot{c}_{\max}\Delta\Bigl(\frac{W}{2} + u\Bigr)\,\mathrm{d}u
  \;=\; \ddot{c}_{\max}\Delta\cdot W
  \;\leq\; \ddot{c}_{\max}\,(\nu^{\mathrm{peak}}/\mu)\,\Delta^2,
  \label{eq:g_bound_quadratic}
\end{align}
which is the stated bound.
Since $\varepsilon^{\ast}(\Delta)$ in \cref{eq:eps_Delta_def}
takes the positive part $\{\cdot\}_{+}$, the bound on
$|g_{\Delta}(+1)|$ suffices.
The $-1$ direction is symmetric.
This matches the bound stated in \cref{lem:eps_quadratic},
since $W = W_\Delta(s) \leq \nu^{\mathrm{peak}}\Delta/\mu$ on any
active slot.
\end{prf}

The absolute-value step in the bound above ignores sign
cancellations across the slot boundary.
A direct computation for \cref{eq:quadratic_cost} with
$\theta_{n}$ at the slot center gives
$|g_{\Delta}(\pm 1)| = \ddot{c}_{\max}\,(\nu^{\mathrm{peak}}/\mu)\,\Delta^2/2$,
so the true worst-case constant lies between
$\ddot{c}_{\max}\,(\nu^{\mathrm{peak}}/\mu)/2$ and
$\ddot{c}_{\max}\,(\nu^{\mathrm{peak}}/\mu)$.
We keep the looser $\ddot{c}_{\max}\,(\nu^{\mathrm{peak}}/\mu)$ because the
slot dependent gradient argument extends cleanly to the
multi-slot proof of \cref{thm:eps_quadratic_global}.
This factor-of-two looseness is consistent with the empirical
prefactor of \cref{sec:numerical} lying well below the
theoretical bound.

\subsection{Proof of \cref{thm:eps_quadratic_global}}
\label{app:proof_epsilon}

We first record a smoothness bound on the assignment interval width
(\cref{app:proof_width_smoothness}). We then prove the theorem in two
steps.
First, we establish a strict discrete concavity of the gain
sequence $g_{\Delta}(k)$ (\cref{app:g_concavity}), which bounds
the maximizer $k^{\ast}$ by a $\Delta$-independent constant.
Second, we combine this concavity with a slot-level quadratic
bound on each marginal gain $D(j)$ to derive the global
$\mathcal{O}(\Delta^{2})$ bound (\cref{app:proof_epsilon_main}).

\subsubsection{Smoothness of the assignment interval width}
\label{app:proof_width_smoothness}

\begin{lem}[Smoothness of the assignment interval width]
  \label{lem:width_smoothness}
  Under \cref{asm:demand_density}, the assignment interval widths
  $W_\Delta(s) = d(s)/\mu$ satisfy
  \begin{align}
    |W_\Delta(s_{i+1}) - W_\Delta(s_{i})|
    \leq \frac{L_\nu \Delta^2}{\mu}
    \label{eq:width_smoothness}
  \end{align}
  for any pair of adjacent active slots.
  Consequently, the truthful profile peak occupancy
  $\max_{i} d(s_{i})/(\mu\Delta)$ deviates from the rate
  ratio $\nu^{\mathrm{peak}}/\mu$ by at most $\mathcal{O}(L_\nu)$
  uniformly in the slot index, and converges to
  $\nu^{\mathrm{peak}}/\mu$ as $\Delta \to 0$.
\end{lem}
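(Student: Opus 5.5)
Under truthful reporting and the deterministic representative assumption~(D3), the width is determined entirely by the slot mass, $d(s_{i}) = \int_{s_{i}-\Delta/2}^{s_{i}+\Delta/2}\nu(\theta)\,\mathrm{d}\theta$. The natural route is therefore to write the difference of adjacent occupancies as a single integral of a shifted difference of $\nu$. Substituting $\theta \mapsto \theta + \Delta$ in the integral defining $d(s_{i+1})$ gives
\begin{align}
  d(s_{i+1}) - d(s_{i})
  = \int_{s_{i}-\Delta/2}^{s_{i}+\Delta/2}
    \bigl[\nu(\theta + \Delta) - \nu(\theta)\bigr]\,\mathrm{d}\theta .
\end{align}
By the Lipschitz bound~\labelcref{eq:nu_Lipschitz} of (D2), the integrand is at most $L_{\nu}\Delta$ in absolute value. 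Integrating over an interval of length $\Delta$ gives $|d(s_{i+1}) - d(s_{i})| \leq L_{\nu}\Delta^{2}$, which is exactly the bound already quoted after~\labelcref{eq:demand}. Dividing by $\mu$ and using~\labelcref{eq:Wdelta_def} then yields~\labelcref{eq:width_smoothness}.

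The one point requiring care is that (D2) is stated only on $\ClS$, while the slot intervals may straddle the endpoints $\theta^{\pm}$. The phrase "adjacent active slots" is meant to restrict to slots inside $\ClS$. For a boundary slot, I would extend $\nu$ by zero and treat two cases:
\begin{itemize}
  \item[(a)] If $\nu(\theta^{\pm}) = 0$, the zero extension remains $L_{\nu}$-Lipschitz on $\BbR$, and the argument above goes through verbatim.
  \item[(b)] Otherwise, I restrict the statement to pairs of slots whose intervals both lie in $\ClS$, which is what "active" should mean here.
\end{itemize}
I expect this bookkeeping at the support edge to be the only real obstacle; the interior estimate is routine.

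For the consequence, I compare each slot average with the density at the slot center. The mean value form of the integral together with (D2) gives
\begin{align}
  \Bigl|\tfrac{d(s_{i})}{\Delta} - \nu(s_{i})\Bigr| \leq \tfrac{L_{\nu}\Delta}{4}.
\end{align}
Taking the maximum over $i$, the upper bound $\max_{i} d(s_{i})/\Delta \leq \nu^{\mathrm{peak}}$ is immediate, since each slot mass is an average of $\nu$ over an interval of length $\Delta$. For the lower bound, choose the slot containing the maximizer $\theta^{\ast}$ of $\nu$. Since $|\theta^{\ast} - s_{i}| \leq \Delta/2$, we get $d(s_{i})/\Delta \geq \nu^{\mathrm{peak}} - L_{\nu}\Delta$.

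Dividing by $\mu$ then shows $\bigl|\max_{i} d(s_{i})/(\mu\Delta) - \nu^{\mathrm{peak}}/\mu\bigr| \leq L_{\nu}\Delta/\mu$. This deviation is $\ClO(L_{\nu})$ uniformly for $\Delta$ in any bounded range, for instance $\Delta \leq T$, and it tends to zero as $\Delta \to 0$, as claimed. In the boundary case the same estimate holds after the zero extension in case (a), or by restricting to interior slots in case (b).
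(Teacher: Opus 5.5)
Your proof of the width inequality is the same as the paper's: both shift the integration window by $\Delta$ and bound the integrand by $L_\nu\Delta$ using (D2). Where you differ is the consequence about the peak occupancy.

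The paper argues only from the adjacent-difference bound together with $\max_i d(s_i) \le \nu^{\mathrm{peak}}\Delta$. That gives the upper side. It does not by itself show that some slot comes within $\ClO(L_\nu\Delta^2)$ of $\nu^{\mathrm{peak}}\Delta$, because small adjacent differences say nothing about how the slot masses relate to the peak density. Your direct comparison of $d(s_i)/\Delta$ with $\nu$ on the slot supplies that missing lower side, through the choice of the slot containing the maximizer $\theta^{\ast}$. It also gives the explicit deviation $L_\nu\Delta/\mu$, which makes both the $\ClO(L_\nu)$ claim (for bounded $\Delta$) and the convergence as $\Delta \to 0$ precise.

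Your boundary discussion is more than bookkeeping. When $\nu(\theta^{\pm}) > 0$, the inequality genuinely fails for a slot straddling an endpoint. For the uniform prior $L_\nu = 0$, yet a straddling slot carries strictly less mass than its interior neighbour. So the restriction in your case (b) is necessary, and it matches the paper's earlier phrase ``adjacent slots within $\ClS$''; the paper's proof passes over this point silently.

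Two small points:
\begin{itemize}
\item In case (b), if $\theta^{\ast}$ lies in a straddling slot, you cannot use that slot. Use the neighbouring interior slot instead, which gives $d/\Delta \geq \nu^{\mathrm{peak}} - \tfrac{3}{2}L_\nu\Delta$, the same order.
\item The constant $L_\nu\Delta/4$ comes from integrating $L_\nu|\theta - s_i|$ over the slot. The mean value form only gives $L_\nu\Delta/2$.
\end{itemize}
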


\begin{prf}
By \cref{asm:demand_density}, the truthful occupancy is
$d(s_{i}) = \int_{[s_{i} - \Delta/2,\, s_{i} + \Delta/2)}
\nu\,\mathrm{d}\theta$, so
$|d(s_{i+1}) - d(s_{i})| \leq L_\nu \Delta^2$
by the Lipschitz continuity of $\nu$
(\cref{asm:demand_density}(D2)) and translation invariance of the
integration window.
Dividing by $\mu$ yields~Inequality~\labelcref{eq:width_smoothness}.
Since adjacent occupancies differ by $\mathcal{O}(L_\nu\Delta^2)$
and $\max_{i} d(s_{i}) \leq \nu^{\mathrm{peak}}\Delta$, the
peak ratio $\max_{i} d(s_{i})/(\mu\Delta)$ deviates from
$\nu^{\mathrm{peak}}/\mu$ by at most $\mathcal{O}(L_\nu)$ and
converges to it as $\Delta \to 0$.
\end{prf}

\subsubsection{Strict discrete concavity of the gain sequence}
\label{app:g_concavity}

The strict concavity uses the piecewise $C^2$ structure of both
the schedule delay cost and the toll.
A slot-mean Taylor expansion enabled by \cref{asm:demand_density}
then yields the following strict discrete concavity.

\begin{lem}[Strict discrete concavity of the gain sequence]
  \label{lem:g_concavity}
  Assume \cref{asm:schedule_cost,asm:demand_density}, the
  peakedness condition
  $\nu^{\mathrm{peak}}/\mu < 2$, and the curvature
  condition
  \begin{align}
    \frac{\ddot{c}_{\min}}{\ddot{c}_{\max}}
    - \phi\!\left(\frac{\nu^{\mathrm{peak}}}{\mu}\right) > 0.
    \label{eq:kappa_g_positivity}
  \end{align}
  Define the slotwise concavity coefficient
  \begin{align}
    \kappa_{g}(s) \coloneqq
    \ddot{c}_{\min}\,\bigl(W_\Delta(s)\bigr)^{2}
    - \ddot{c}_{\max}\,\bigl(W_\Delta(s) - \Delta\bigr)
      \bigl(2W_\Delta(s) - \Delta\bigr)
    \label{eq:kappa_g_def}
  \end{align}
  and its global lower bound
  $\bar\kappa_{g} \coloneqq \min_{i:\, d(s_{i}) > 0}
  \kappa_{g}(s_{i}) > 0$.
  For every interior step $k \geq 1$ such that
  $s^{k-1}, s^k, s^{k+1}$ are active,
  \begin{align}
    D(k) - D(k{-}1)
    \leq -\kappa_{g}(s^k) + R_\nu(\Delta),
    \label{eq:D_second_diff}
  \end{align}
  where $R_\nu(\Delta) = (\dot{c}_{\max}\,L_\nu/\mu)\,\Delta^2
  + \mathcal{O}(\Delta^3)$ is the smoothness correction induced
  by \cref{asm:demand_density} and \cref{lem:width_smoothness}.
\end{lem}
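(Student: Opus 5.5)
We will compute the second difference $D(k)-D(k-1)$ directly from the integral representation of $C_\Delta$. Write $s^j \coloneqq r_\Delta(\theta_n)+j\Delta$, $\Gamma^j = [a_j, a_j+W_j]$ with $W_j = W_\Delta(s^j)$ and $a_{j+1}=a_j+W_j$, and $G(\theta,t)=c(\theta,t)+p^{\ast}_\Delta(t)$. Then
\[
D(k)-D(k-1) = 2C_\Delta(s^k;\theta_n) - C_\Delta(s^{k-1};\theta_n) - C_\Delta(s^{k+1};\theta_n),
\]
which is minus a discrete second difference of the slot means of $G(\theta_n,\cdot)$ over three consecutive intervals.

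\textbf{Step 1 (equal-width reduction).} First we replace $W_{k-1},W_{k+1}$ by $W_k$. By \cref{lem:width_smoothness} they differ from $W_k$ by at most $L_\nu\Delta^2/\mu$. Shifting an endpoint of an interval by $\ClO(L_\nu\Delta^2/\mu)$ changes a slot mean of $G$ by at most $\sup|G_t|$ times that shift. Since $|G_t|\le \dot{c}_{\max}$ up to constants (by (C3) and \cref{lem:toll_Lip}), this produces exactly the correction $R_\nu(\Delta)=(\dot c_{\max}L_\nu/\mu)\Delta^2+\ClO(\Delta^3)$. After this step we may treat the three intervals as having common width $W\coloneqq W_k$.

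\textbf{Step 2 (Taylor expansion using the KKT cancellation).} Parametrize by $u\in[0,W]$ and write the second difference as
\[
\frac1W\int_0^W\bigl[2G(\theta_n,a_k+u)-G(\theta_n,a_k-W+u)-G(\theta_n,a_k+W+u)\bigr]\,\mathrm du .
\]
On $\Gamma^j$, \cref{eq:Gt} gives $G_t(\theta,t)=c_t(\theta,t)-c_t(s^j,t)$. Hence $G(\theta_n,\cdot)$ is piecewise $C^2$ with $G_{tt}=c_{tt}(\theta_n,t)-c_{tt}(s^j,t)$. At each boundary $a_j$, $G_t$ jumps by $c_t(s^{j-1},a_j)-c_t(s^j,a_j)$. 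By (C1) and (C4), this jump has magnitude between $\ddot c_{\min}\Delta$ and $\ddot c_{\max}\Delta$, with a definite sign: the toll slope drops as the slot index increases. We then integrate $G_t$ piecewise from the midpoint $a_k+u$ outward to $a_k\pm W+u$.

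\textbf{Step 3 (sign bookkeeping).} The jump terms accumulate over a length-$W$ stretch weighted linearly. They produce the favourable contribution $-\ddot c_{\min}W^2$, averaging $\Delta\cdot W$ against the factor coming from $\int_0^W$ of the distance to the boundary. The smooth part involves $c_{tt}(\theta_n,\cdot)-c_{tt}(s^j,\cdot)$. Its magnitude is at most $\ddot c_{\max}$ times the geometric mismatch between the report spacing $\Delta$ and the interval width $W$, since the schedule-cost curvature is evaluated at deviations $\theta_n-t$ that drift by $W-\Delta$ per slot. We expect this to give the adverse term $\ddot c_{\max}(W-\Delta)(2W-\Delta)$. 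Collecting terms yields $D(k)-D(k-1)\le-\kappa_g(s^k)+R_\nu(\Delta)$.

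\textbf{Step 4 (positivity).} Positivity of $\bar\kappa_g$ follows from writing $W=z\Delta$ with $z\le\nu^{\mathrm{peak}}/\mu$, up to \cref{lem:width_smoothness}. Then $\kappa_g=\Delta^2 z^2\ddot c_{\max}\bigl(\ddot c_{\min}/\ddot c_{\max}-\phi(z)\bigr)$. On $z\in(1,2)$, $\phi$ is increasing, so \cref{eq:kappa_g_positivity} controls all active slots. For $z\le1$ the second term of $\kappa_g$ is nonpositive or small, and positivity is immediate.

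The main obstacle is Step 3. We must get the exact constants $(W-\Delta)(2W-\Delta)$ rather than a crude $\ClO(W^2)$, because positivity of $\kappa_g$ hinges on them. This requires tracking where the $C^2$ kinks of $c$ (the point $t=\theta$) fall relative to the three intervals. We would first treat the case in which no kink lies inside $\Gamma^{k-1}\cup\Gamma^k\cup\Gamma^{k+1}$. We would then argue that a kink only shifts curvature between $\ddot c_{\min}$ and $\ddot c_{\max}$ on the appropriate side, which is already accounted for by using $\ddot c_{\min}$ on the favourable term and $\ddot c_{\max}$ on the adverse one.
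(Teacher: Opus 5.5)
\textbf{Verdict: there is a genuine gap at Step 3.} Steps 1--2 and the factorisation in Step 4 are sound. Step 3, however, is the entire content of the lemma, and you only say what you expect it to give. The bookkeeping you sketch also misattributes the terms, so carrying it out as described does not reach $\kappa_g$.

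Write $c(\theta,t)=f(\theta-t)$, $g\coloneqq G(\theta_n,\cdot)$, and $E\coloneqq-(D(k)-D(k-1))$. With equal widths,
\[
E=\int K(\sigma)\,g''(\sigma)\,\mathrm{d}\sigma+\tfrac{W}{2}\,(J_k+J_{k+1}),
\]
where:
\begin{itemize}
\item $g''=G_{tt}(\theta_n,\cdot)$ off the slot boundaries;
\item $K\ge 0$ is the $u$-average of the hat $(W-|\sigma-a_k-u|)_+$; it has total mass $W^2$, equals $W/2$ at $a_k$ and $a_{k+1}$, and vanishes at $a_k-W$ and $a_k+2W$;
\item $J_j=f'(s^j-a_j)-f'(s^{j-1}-a_j)\in[\ddot{c}_{\min}\Delta,\ddot{c}_{\max}\Delta]$.
\end{itemize}

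\textbf{The jumps are upward, and they give $\Theta(\Delta W)$, not $\ddot{c}_{\min}W^2$.} On $\Gamma^{\ast}_\Delta(s^j)$ the toll slope is $f'(s^j-t)$, which rises across each boundary because $f'$ is increasing. Your remark that it drops has the sign backwards. For a symmetric quadratic cost, $g''\equiv 0$ and $E=\ddot{c}\,\Delta W<\ddot{c}\,W^2$.

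\textbf{Where the favourable term actually comes from.} The $\ddot{c}_{\min}W^2$ comes from the $f''(\theta_n-t)$ half of $g''$. At corresponding positions in consecutive intervals, $\theta_n-t$ shifts by $W$; it is $s^j-t$ that shifts by $W-\Delta$.

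\textbf{The smooth term is not controlled as you expect.} Its magnitude is not bounded by $\ddot{c}_{\max}(W-\Delta)(2W-\Delta)$. As $W\downarrow\Delta$ that bound forces it to vanish. Yet for an asymmetric quadratic cost, with the intervals straddling $t=s^j$ but not $t=\theta_n$, it stays of order $(\ddot{c}_{\max}-\ddot{c}_{\min})\Delta^2$.

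\textbf{Separate worst-case bounding fails.} Bounding the jumps and the two halves of $g''$ separately by worst-case curvature gives only $E\ge(\ddot{c}_{\min}-\ddot{c}_{\max})W^2+\ddot{c}_{\min}\Delta W$. Take $W=\tfrac32\Delta$ and $\ddot{c}_{\min}/\ddot{c}_{\max}=\tfrac12$, which is admissible since $\phi(3/2)=4/9$. Then this lower bound equals $-\tfrac38\ddot{c}_{\max}\Delta^2<0$, whereas $\kappa_g=\tfrac18\ddot{c}_{\max}\Delta^2$.

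\textbf{The missing idea: net the toll's within-slot concavity against its boundary jumps before applying any curvature bound.} In the variable $z=s-t$, across $\Gamma^{\ast}_\Delta(s^j)$ the toll slope runs $f'$ from $\alpha_j\coloneqq s^j-a_j$ down to $\alpha_j-W$. The jump at $a_{j+1}$ then brings it back up to $\alpha_{j+1}=\alpha_j-(W-\Delta)$, re-integrating $f''$ over part of the same range. The net slope change per slot is therefore $-\int_{\alpha_{j+1}}^{\alpha_j}f''$, over a length $W-\Delta$ only.

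The paper builds this in through the decomposition $C_\Delta(s^j;\theta_n)=A_j+\lambda^{\ast}_\Delta(s^j)-B_j$, with $A_j=F_W(\theta_n-a_j)$, $B_j=F_W(\alpha_j)$ and $F_W(z)=W^{-1}\int_{z-W}^{z}f$.
\begin{itemize}
\item The arguments of $A$ step by $-W$, so its second difference is at least $\ddot{c}_{\min}W^2$.
\item The arguments of $B$ step by $-(W-\Delta)$, so its second difference is at most $\ddot{c}_{\max}(W-\Delta)^2$.
\item The increments $\lambda^{\ast}_\Delta(s^{j+1})-\lambda^{\ast}_\Delta(s^j)=f(\alpha_{j+1})-f(\alpha_{j+1}-\Delta)$ give a second difference of $\lambda^{\ast}_\Delta$ of at least $-\ddot{c}_{\max}W(W-\Delta)$.
\end{itemize}
Adding these yields exactly $\kappa_g$. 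Since $F_W''(z)=W^{-1}\int_{z-W}^{z}f''\in[\ddot{c}_{\min},\ddot{c}_{\max}]$ and $f$ is $C^1$, no case analysis on where the kinks fall is needed.

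\textbf{A smaller error in Step 4.} Positivity is not immediate for $z\le 1$. For $z<\tfrac12$ the product $(W-\Delta)(2W-\Delta)$ is positive, and $\kappa_g<0$ whenever $z<(3-\sqrt{5})/2$, whatever the curvature ratio. The paper's Step 3 assumes $\Delta<W_k<2\Delta$, and your argument likewise needs the slots involved to have load in $(1,2)$.
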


\begin{cor}[Uniform discrete concavity]
  \label{cor:g_concavity_uniform}
  Under the assumptions of \cref{lem:g_concavity} and the
  demand smoothness threshold
  \begin{align}
    L_\nu \;<\; \frac{\mu\,\bar\kappa_{g}}{\dot{c}_{\max}\,\Delta^2},
    \label{eq:smoothness_threshold}
  \end{align}
  the bound \labelcref{eq:D_second_diff} reduces to
  \begin{align}
    D(k) - D(k{-}1)
    \leq -\bigl(\bar\kappa_{g} - \dot{c}_{\max}\,L_\nu\Delta^2/\mu\bigr)
    + \mathcal{O}(\Delta^3),
  \end{align}
  uniformly in $k$, with a strictly positive effective concavity
  margin $\bar\kappa_{g} - \dot{c}_{\max}\,L_\nu\Delta^2/\mu > 0$.
\end{cor}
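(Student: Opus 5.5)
The corollary follows directly from \cref{lem:g_concavity}. The plan has three steps.

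First, lower-bound $\kappa_g(s^k)$ by $\bar\kappa_g$ uniformly in the slot. Every interior step $k$ in \labelcref{eq:D_second_diff} has $s^{k-1}, s^{k}, s^{k+1}$ active, so in particular $d(s^k) > 0$. By the definition of $\bar\kappa_g$ as the minimum over active slots, $-\kappa_g(s^k) \le -\bar\kappa_g$ for every such $k$. Positivity of $\bar\kappa_g$ is part of the hypotheses of \cref{lem:g_concavity}. It comes from the peakedness condition together with the curvature condition \labelcref{eq:kappa_g_positivity}: dividing $\kappa_g(s)$ by $\ddot c_{\max}W^2$ and writing $z = W/\Delta$ gives $\ddot c_{\min}/\ddot c_{\max} - \phi(z)$. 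Using \cref{lem:width_smoothness} to relate $z$ to $\nu^{\mathrm{peak}}/\mu$, this is the quantity the lemma asserts is positive, so I take $\bar\kappa_g>0$ as given rather than re-deriving it.

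Second, insert the explicit form of the remainder. From the lemma, $R_\nu(\Delta) = (\dot c_{\max}L_\nu/\mu)\Delta^2 + \mathcal O(\Delta^3)$. The constant in the $\mathcal O(\Delta^3)$ term depends only on the primitives (the third-order slot-mean Taylor terms and the width variation of \cref{lem:width_smoothness}) and not on $k$, because every ingredient is a sup over slots. Substituting into \labelcref{eq:D_second_diff} gives
\begin{align}
D(k)-D(k-1) \le -\bar\kappa_g + \frac{\dot c_{\max}L_\nu\Delta^2}{\mu} + \mathcal O(\Delta^3)
= -\Bigl(\bar\kappa_g - \frac{\dot c_{\max}L_\nu\Delta^2}{\mu}\Bigr) + \mathcal O(\Delta^3),
\end{align}
uniformly in $k$.

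Third, rearrange the threshold \labelcref{eq:smoothness_threshold}: $L_\nu < \mu\bar\kappa_g/(\dot c_{\max}\Delta^2)$ is equivalent to $\dot c_{\max}L_\nu\Delta^2/\mu < \bar\kappa_g$. This is exactly strict positivity of the effective margin.

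The main point needing care is the uniformity in $k$ of the $\mathcal O(\Delta^3)$ term. I would check in the proof of \cref{lem:g_concavity} that its constant is built from $\ddot c_{\max}$, $L_\nu$, $\mu$ and $\nu^{\mathrm{peak}}$ only, with no dependence on the slot index.

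A secondary subtlety is that $\bar\kappa_g$ itself scales like $W^2 = \mathcal O(\Delta^2)$, so the margin is of order $\Delta^2$ and an $\mathcal O(\Delta^3)$ remainder is genuinely subordinate. The corollary nevertheless states it separately rather than absorbing it. That matches the role of $\Delta_{\max}$ in \cref{thm:eps_quadratic_global}, which is to make the remainder dominated by the concavity margin. I would state the corollary exactly as written and defer that absorption to the theorem's proof.
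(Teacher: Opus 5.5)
Your proposal is correct and follows the paper's (implicit) argument: replace $\kappa_g(s^k)$ by its minimum $\bar\kappa_g$ over active slots, insert the leading term $(\dot{c}_{\max}L_\nu/\mu)\Delta^2$ of $R_\nu(\Delta)$ with a slot-independent $\mathcal{O}(\Delta^3)$ remainder, and read the threshold \labelcref{eq:smoothness_threshold} as strict positivity of the margin. The only difference is cosmetic: the paper justifies $\bar\kappa_g>0$ from the direct bound $W_\Delta(s)/\Delta \le \nu^{\mathrm{peak}}/\mu$ together with monotonicity of $\phi$, not through \cref{lem:width_smoothness}, which only controls the peak occupancy.
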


The peakedness condition $\nu^{\mathrm{peak}}/\mu < 2$, together with the
modelling scope $\nu^{\mathrm{peak}}/\mu > 1$ of \cref{subsec:FIFW},
places the peak load in $(1, 2)$.
The bound $\bar\kappa_{g} > 0$ follows from
Inequality~\labelcref{eq:kappa_g_positivity}. Factoring
$\kappa_{g}(s) = \ddot{c}_{\max}\,W_\Delta(s)^2
[\ddot{c}_{\min}/\ddot{c}_{\max} - \phi(W_\Delta(s)/\Delta)]$, and
since $\phi$ is increasing with
$W_\Delta(s)/\Delta \leq \nu^{\mathrm{peak}}/\mu$
on active slots (a single price-taking user adds no mass), the
bracket is bounded below by
$\ddot{c}_{\min}/\ddot{c}_{\max} - \phi(\nu^{\mathrm{peak}}/\mu) > 0$
uniformly over active slots. As $W_\Delta(s)^2 > 0$, this gives
$\kappa_{g}(s) > 0$ for every active slot and hence
$\bar\kappa_{g} > 0$, without identifying the minimizing slot.
\begin{prf}
  The proof bounds the second difference of the slot-mean cost
  $C_\Delta(s^j;\theta_{n})$, which is the negative of
  $D(k) - D(k{-}1)$ by uniform slot spacing.
  We (i) decompose $C_\Delta$ into a schedule delay cost term and
  a toll term, (ii) bound the second difference of the schedule
  delay cost terms via a Taylor expansion, (iii) bound the second
  difference of the toll term via continuity at slot boundaries,
  and (iv) combine the three contributions to obtain
  $\kappa_g(s^k)$.

  Slot midpoints $s^{k-1}, s^k, s^{k+1}$ are uniformly spaced by
  $\Delta$, so
  \begin{align}
    D(k) - D(k{-}1)
    = -\bigl[C_\Delta(s^{k+1};\theta_{n}) - 2 C_\Delta(s^k;\theta_{n})
        + C_\Delta(s^{k-1};\theta_{n})\bigr].
    \label{eq:D2C}
  \end{align}

  \textit{Step 1: Decomposition of $C_\Delta$.}
  By the toll formula
  $p^{\ast}_\Delta(t) = \lambda^{\ast}_\Delta(s^j) - f(s^j - t)$
  on $\Gamma^{\ast}_\Delta(s^j) = [a^j, a^j + W_{j}]$ with
  $W_{j} = W_\Delta(s^j) = d(s_{j})/\mu$, changing variables in
  $C_\Delta(s^j;\theta_{n}) =
   \tfrac{1}{W_{j}}\int_{\Gamma^{\ast}_\Delta(s^j)}
  [f(\theta_{n} - t) + p^{\ast}_\Delta(t)]\,\mathrm{d}t$ gives
  \begin{align}
    C_\Delta(s^j;\theta_{n})
    =
    \underbrace{A_{j}}_{\substack{\text{schedule delay cost}\\\text{at }\theta_n\text{ averaged}\\\text{over slot }s^j}}
    +
    \underbrace{\lambda^{\ast}_\Delta(s^j)}_{\substack{\text{equilibrium}\\\text{cost of slot }s^j}}
    -
    \underbrace{B_{j}}_{\substack{\text{schedule delay cost}\\\text{at the slot center }s^j\\\text{averaged over the same interval}}},
    \label{eq:C_decomp}
  \end{align}
  where, with $F_{W}(z) \coloneqq \tfrac{1}{W}\int_{z-W}^{z} f(u)\,\mathrm{d}u$
  and $\alpha_{j} \coloneqq s^j - a^j$,
  \begin{align}
    A_{j} = F_{W_{j}}(\theta_{n} - a^j),
    \qquad
    B_{j} = F_{W_{j}}(\alpha_{j}).
  \end{align}
  We take slot $k$ as the local reference. Write
  $W_{j} = W_{k} + (W_{j} - W_{k})$ and decompose
  $A_{j} = F_{W_{k}}(\theta_{n} - a^j) + (F_{W_{j}} - F_{W_{k}})(\theta_{n} - a^j)$,
  similarly for $B_{j}$.
  By~\cref{asm:demand_density},
  $|W_{j} - W_{k}| \leq |j - k| L_\nu \Delta^2/\mu$, so all
  reference-frame corrections below ($A$, $B$, and
  $\lambda^{\ast}_{\Delta}$) are uniformly
  $\mathcal{O}(\dot{c}_{\max} L_\nu\Delta^2/\mu)$ for
  $|j - k| \leq 1$ and absorbed into the single smoothness
  correction $R_\nu(\Delta)$ defined in~\cref{eq:Rnu_def} below.
  Because the assignment intervals tile, $a^{j+1} = a^j + W_{j}$, so
  \begin{align}
    \theta_{n} - a^{j+1} = (\theta_{n} - a^j) - W_{j},
    \quad
    \alpha_{j+1} = \alpha_{j} - (W_{j} - \Delta).
    \label{eq:uniform_steps}
  \end{align}
  Within the reference frame ($W_{j} \to W_{k}$), the arguments of
  $F_{W_{k}}$ in $A$ and $B$ traverse points spaced by $-W_{k}$ and
  $-(W_{k} - \Delta)$, respectively, plus $\mathcal{O}(\Delta^2)$
  smoothness deviations.

  \textit{Step 2: Schedule delay cost contribution.}
  Differentiating $F_{W_{k}}$ gives
  $F_{W_{k}}'(z) = (1/W_{k})[f(z) - f(z - W_{k})]$, with
  $|F_{W_{k}}'| \leq \dot{c}_{\max}$ by Lipschitz continuity of $f$, and
  \begin{align}
    F_{W_{k}}''(z) = \frac{1}{W_{k}}\int_{z-W_{k}}^{z} f''(u)\,\mathrm{d}u
    \in [\ddot{c}_{\min}, \ddot{c}_{\max}]
    \label{eq:Fpp_bound}
  \end{align}
  by~(C4) of \cref{asm:schedule_cost}.
  For any $C^2$ function $\Phi$ with $\Phi'' \in [m, M]$ and points
  $a, b, c$ with $b - a = h_{1}$ and $c - b = h_{2}$, Taylor expansion
  gives
  \begin{align}
    \Phi(c) - 2\Phi(b) + \Phi(a)
    = \Phi'(b)(h_{2} - h_{1})
    + \tfrac{1}{2}\Phi''(\xi_{1})\,h_{2}^2
    + \tfrac{1}{2}\Phi''(\xi_{2})\,h_{1}^2,
    \label{eq:Taylor_nonuniform}
  \end{align}
  where $\xi_{1} \in (b, c)$, $\xi_{2} \in (a, b)$.
  Apply~\cref{eq:Taylor_nonuniform} to $A$, with the points
  $a = z_{k-1} = z_{k} + W_{k-1}$, $b = z_{k}$, $c = z_{k+1} = z_{k} - W_{k}$:
  the steps are $h_{1} = -W_{k-1}$ and $h_{2} = -W_{k}$, so
  $h_{2} - h_{1} = W_{k-1} - W_{k}$.
  By \cref{lem:width_smoothness},
  $|h_{2} - h_{1}| \leq L_\nu\Delta^2/\mu$, hence
  \begin{align}
    \bigl|F_{W_{k}}'(z_{k})(h_{2} - h_{1})\bigr|
    \leq \dot{c}_{\max}\cdot \frac{L_\nu\Delta^2}{\mu}.
  \end{align}
  The second-order terms satisfy
  $\tfrac{1}{2}F_{W_{k}}''(\xi_{1})h_{2}^2 + \tfrac{1}{2}F_{W_{k}}''(\xi_{2})h_{1}^2
  \geq \tfrac{\ddot{c}_{\min}}{2}(W_{k}^2 + W_{k-1}^2)
  \geq \ddot{c}_{\min} W_{k}^2 - \ddot{c}_{\min}|W_{k-1}^2 - W_{k}^2|/2$,
  with $|W_{k-1}^2 - W_{k}^2| \leq 2W_{k}|W_{k-1} - W_{k}| \leq
  2(\nu^{\mathrm{peak}}\Delta/\mu)\cdot L_\nu\Delta^2/\mu = \mathcal{O}(\Delta^3)$.
  Combining,
  \begin{align}
    A_{k+1} - 2 A_{k} + A_{k-1}
    \geq \ddot{c}_{\min} W_{k}^2 - R_\nu(\Delta),
    \label{eq:A_2nd_diff_rigorous}
  \end{align}
  with the explicit smoothness correction
  \begin{align}
    R_\nu(\Delta) \coloneqq
    \frac{\dot{c}_{\max}\,L_\nu}{\mu}\,\Delta^2
    + \mathcal{O}(\Delta^3),
    \label{eq:Rnu_def}
  \end{align}
  where the $\mathcal{O}(\Delta^3)$ term collects higher-order
  smoothness corrections.
  Applying~\cref{eq:Taylor_nonuniform} analogously to $B$ with
  reference step $-(W_{k}-\Delta)$,
  \begin{align}
    B_{k+1} - 2 B_{k} + B_{k-1}
    \leq \ddot{c}_{\max}(W_{k} - \Delta)^2 + R_\nu(\Delta).
    \label{eq:B_2nd_diff_rigorous}
  \end{align}
  The schedule delay cost contribution to the
  second difference of $C_\Delta$ is at least $\ddot{c}_{\min} W_{k}^2$
  and the internal correction at most $\ddot{c}_{\max}(W_{k}-\Delta)^2$, both
  up to $R_\nu(\Delta)$.

  \textit{Step 3: Toll contribution.}
  At each slot boundary $t_{b} = a^{j+1}$, continuity of
  $p^{\ast}_\Delta$ together with~\cref{eq:lambda_recursion} yields
  \begin{align}
    \lambda^{\ast}_\Delta(s^{j+1}) - \lambda^{\ast}_\Delta(s^j)
    = f(s^{j+1} - a^{j+1}) - f(s^j - a^{j+1})
    = f(\xi_{j} + \Delta) - f(\xi_{j}),
    \label{eq:lambda_step}
  \end{align}
  where $\xi_{j} \coloneqq \alpha_{j} - W_{j}$ and the second equality uses
  $\alpha_{j+1} - (\alpha_{j} - W_{j}) = \Delta$
  from~\cref{eq:uniform_steps}.
  By~\cref{eq:uniform_steps},
  $\xi_{j+1} - \xi_{j} = \alpha_{j+1} - \alpha_{j}
   = -(W_{j} - \Delta)$, which equals $-(W_{k} - \Delta)$
  in the local reference frame up to the smoothness correction
  $R_\nu(\Delta)$ of~\cref{eq:Rnu_def}.
  Forming the second difference of~\cref{eq:lambda_step} and
  applying the mean value theorem twice gives
  \begin{align}
    \lambda^{\ast}_\Delta(s^{k+1}) - 2\lambda^{\ast}_\Delta(s^k)
       + \lambda^{\ast}_\Delta(s^{k-1})
    &= -(W_{k} - \Delta)\,
       \bigl[f'(\zeta_{1}) - f'(\zeta_{2})\bigr]
    + R_\nu(\Delta),
    \label{eq:lambda_2nd_diff}
  \end{align}
  for some $\zeta_{1}, \zeta_{2}$ in the appropriate intervals.
  As in the uniform case, $\Delta < W_{k} < 2\Delta$ ensures
  $\zeta_{1} > \zeta_{2}$ and $\zeta_{1} - \zeta_{2} \in
  [2\Delta - W_{k},\, W_{k}]$, so
  $f'(\zeta_{1}) - f'(\zeta_{2}) \in (0,\, \ddot{c}_{\max}W_{k}]$ by
  $\ddot{c}_{\min} \leq f'' \leq \ddot{c}_{\max}$.
  Substituting,
  \begin{align}
    \lambda^{\ast}_\Delta(s^{k+1}) - 2\lambda^{\ast}_\Delta(s^k)
       + \lambda^{\ast}_\Delta(s^{k-1})
    \in \bigl[-\ddot{c}_{\max}\,W_{k}(W_{k}-\Delta) + R_\nu,\;
              -\ddot{c}_{\min}\,(2\Delta-W_{k})(W_{k}-\Delta) + R_\nu\bigr].
    \label{eq:lambda_2nd_range}
  \end{align}

  \textit{Step 4: Combining.}
  By~\cref{eq:C_decomp}, the second difference of $C_\Delta$ is
  the sum of the second differences of $A$,
  $\lambda^{\ast}_\Delta$, and $-B$.
  Using the preceding bounds,
  \begin{align}
    &C_\Delta(s^{k+1};\theta_{n}) - 2 C_\Delta(s^k;\theta_{n})
       + C_\Delta(s^{k-1};\theta_{n})
    \notag\\
    &\quad\geq
    \ddot{c}_{\min}\,W_{k}^2
    - \ddot{c}_{\max}\,W_{k}(W_{k}-\Delta)
    - \ddot{c}_{\max}\,(W_{k}-\Delta)^2
    + R_\nu(\Delta)
    \notag\\
    &\quad=
    \ddot{c}_{\min}\,W_{k}^2 - \ddot{c}_{\max}\,(W_{k}-\Delta)(2W_{k}-\Delta)
    + R_\nu(\Delta)
    = \kappa_{g}(s^k) + R_\nu(\Delta),
  \end{align}
  where the algebraic identity
  $W_{k}(W_{k}-\Delta) + (W_{k}-\Delta)^2 = (W_{k}-\Delta)(2W_{k}-\Delta)$ is used.
  Combining with~\cref{eq:D2C} yields~Inequality~\labelcref{eq:D_second_diff}.
  The positivity condition~\labelcref{eq:kappa_g_positivity} for
  $\bar\kappa_{g} > 0$ follows by monotonicity of $\phi$ on
  $(1, 2)$ and the upper bound
  $W_\Delta(s_{i})/\Delta \leq \nu^{\mathrm{peak}}/\mu$
  on active slots.
\end{prf}

\paragraph{Range of the positivity condition.}
The occupancy $d$ and capacity $\mu$ enter the bound only
through the slot load $W_\Delta(s)/\Delta = d(s)/(\mu\Delta)$
and its peak value $\nu^{\mathrm{peak}}/\mu$ (a single price-taking
user adds no mass).
The positivity condition~\labelcref{eq:kappa_g_positivity} thus
prescribes a lower bound on the convexity ratio
$\ddot{c}_{\min}/\ddot{c}_{\max}$ as a function of the peak load.
For symmetric cost ($\ddot{c}_{\min} = \ddot{c}_{\max}$), the condition
reduces to $\nu^{\mathrm{peak}}/\mu < 2$, so
$\bar\kappa_{g} > 0$ holds throughout the admissible range (the
modelling scope of \cref{subsec:FIFW} pins
$\nu^{\mathrm{peak}}/\mu > 1$).
For asymmetric cost, the lower bound increases with the peak load:
it equals $0$ as the peak load $\to 1^{+}$ and $3/4$ as the peak
load $\to 2^{-}$.
The asymmetric quadratic cost~\cref{eq:quadratic_cost} satisfies
$\ddot{c}_{\min}/\ddot{c}_{\max} = \min(\beta,\gamma)/\max(\beta,\gamma)$, so for
$\gamma/\beta = 2$ (giving $\ddot{c}_{\min}/\ddot{c}_{\max} = 1/2$),
$\bar\kappa_{g} > 0$ holds for peak load in
$(1,\, 1 + 1/\sqrt{3}) \approx (1,\,1.577)$.
This contains the uniform prior
($\nu^{\mathrm{peak}}/\mu \approx 1.33$) but not the triangular
baseline ($\nu^{\mathrm{peak}}/\mu \approx 2.66$) of
\cref{sec:numerical}, consistent with the discussion of the
validity conditions in \cref{app:regime}.

\subsubsection{From the marginal bounds to the global bound}
\label{app:proof_epsilon_main}

We combine the slot-level quadratic bound on each marginal gain
$D(j)$ (extending \cref{lem:eps_quadratic} to arbitrary $j$) with
the strict discrete concavity from
\cref{lem:g_concavity,cor:g_concavity_uniform} to derive the
global bound.

\begin{prf}[Proof of \cref{thm:eps_quadratic_global}]
  The argument has three steps.
  Step (i) extends the slot-level quadratic bound of
  \cref{lem:eps_quadratic} to arbitrary $j$, giving
  $|D(j)| \leq \ddot{c}_{\max}\,(\nu^{\mathrm{peak}}/\mu)(j+1)\Delta^{2}$.
  Step (ii) uses the strict discrete concavity of
  \cref{lem:g_concavity,cor:g_concavity_uniform} to bound the
  maximizer $k^{\ast}$ by a $\Delta$-independent constant
  $\bar k$.
  Step (iii) telescopes the slot-level bound from $j = 0$ to
  $\bar k - 1$ to obtain $g_{\Delta}(k^{\ast}) = \ClO(\Delta^{2})$.

  Fix arbitrary $n, \theta_{n}$ and let
  $s^0 \coloneqq r_{\Delta}(\theta_{n})$.
  We treat $k \geq 0$. The case $k \leq 0$ is symmetric.

  \textit{Step 1: Slot-level quadratic bound on $D(j)$.}
  Fix $j \geq 0$ and apply the argument of
  \cref{app:proof_eps_quadratic} with the slot pair
  $(s^j, s^{j+1})$ in place of $(s_{i}, s_{i+1})$.
  The slotwise gradient bound~\labelcref{eq:Gt_bound} gives
  $|G_{t}(\theta_{n}, t)| \leq \ddot{c}_{\max}|\theta_{n} - s^j|
   \leq \ddot{c}_{\max}(j + 1/2)\,\Delta$ on $\Gamma^{\ast}_\Delta(s^j)$ and
  $|G_{t}(\theta_{n}, t)| \leq \ddot{c}_{\max}|\theta_{n} - s^{j+1}|
   \leq \ddot{c}_{\max}(j + 3/2)\,\Delta$ on $\Gamma^{\ast}_\Delta(s^{j+1})$.
  Substituting into the mean-difference bound and using
  $W \leq (\nu^{\mathrm{peak}}/\mu)\Delta$ uniformly,
  \begin{align}
    |D(j)|
    \leq \ddot{c}_{\max}\,(\nu^{\mathrm{peak}}/\mu)\,(j + 1)\,\Delta^2.
    \label{eq:D_slot_bound}
  \end{align}
  In particular $|D(0)| \leq \ddot{c}_{\max}\,(\nu^{\mathrm{peak}}/\mu)\,\Delta^2$
  recovers \cref{lem:eps_quadratic}.

  \textit{Step 2: Bound on the maximizer $k^{\ast}$.}
  \cref{lem:eps_quadratic} gives
  $D(0) = C_\Delta(s^0;\theta_{n}) - C_\Delta(s^1;\theta_{n})
  \leq \ddot{c}_{\max}\,(\nu^{\mathrm{peak}}/\mu)\,\Delta^2$,
  and \cref{lem:g_concavity} gives
  $D(k) - D(k{-}1) \leq -\bar\kappa_{g} + R_\nu(\Delta)$
  for every $k \geq 1$, where
  $\bar\kappa_{g} = \min_{i} \kappa_{g}(s_{i}) = \Theta(\Delta^2)$
  (since $W_\Delta(s) \sim \Delta$ in~\cref{eq:kappa_g_def}) and
  $R_\nu(\Delta) = \mathcal{O}(\dot{c}_{\max} L_\nu\Delta^2/\mu)$ is
  the smoothness correction
  (under the threshold of \cref{cor:g_concavity_uniform}).
  Both scale as $\Delta^2$.
  Telescoping the second-difference bound from $k = 1$,
  \begin{align}
    D(k)
    \leq D(0) - k\,\bigl(\bar\kappa_{g} - R_\nu(\Delta)\bigr)
    \leq \ddot{c}_{\max}\,(\nu^{\mathrm{peak}}/\mu)\,\Delta^2
       - k\,\bigl(\bar\kappa_{g} - R_\nu(\Delta)\bigr).
    \label{eq:Dk_telescoped}
  \end{align}
  Provided $L_{\nu}$ satisfies the smoothness threshold
  \labelcref{eq:smoothness_threshold}, the per-step decrement
  $\bar\kappa_{g} - R_\nu(\Delta) = \Theta(\Delta^2)$ is strictly
  positive.
  Since $D(0)$ and this decrement both scale as $\Delta^2$, their
  ratio is $\Delta$-independent, so $D(k) \leq 0$ once $k$ exceeds
  $\bar{k} = \bigl\lceil \ddot{c}_{\max}(\nu^{\mathrm{peak}}/\mu)\,
  \Delta^2 / (\bar\kappa_{g} - R_\nu(\Delta)) \bigr\rceil$,
  a $\Delta$-independent constant, so the maximizer
  $k^{\ast}$ of $g_{\Delta}$ satisfies $k^{\ast} \leq \bar{k}$.

  \textit{Step 3: Telescoping the slot-level bound.}
  Since $k^{\ast} \leq \bar{k}$, $g_{\Delta}(0) = 0$, and partial
  sums increase weakly when negative summands are replaced by
  zero,
  \begin{align}
    g_{\Delta}(k^{\ast})
    &= \sum_{j=0}^{k^{\ast}-1} D(j)
    \leq \sum_{j=0}^{k^{\ast}-1} \max\{D(j),\, 0\}
    \leq \sum_{j=0}^{\bar{k}-1} \max\{D(j),\, 0\}
    \notag \\
    &\leq \sum_{j=0}^{\bar{k}-1}
      \ddot{c}_{\max}\,(\nu^{\mathrm{peak}}/\mu)\,(j + 1)\,\Delta^2
    = \frac{\bar{k}(\bar{k}+1)}{2}\,
      \ddot{c}_{\max}\,(\nu^{\mathrm{peak}}/\mu)\,\Delta^2,
    \label{eq:g_kstar_bound}
  \end{align}
  where the last inequality applies the slot-level quadratic
  seed $D(j) \leq \ddot{c}_{\max}\,(\nu^{\mathrm{peak}}/\mu)\,(j + 1)\,\Delta^2$
  to the non-negative parts.
  For $k > k^{\ast}$, $g_{\Delta}(k) \leq g_{\Delta}(k^{\ast})$ by
  definition of the maximizer, so the same bound holds for all
  $k \geq 0$.
  Taking the supremum over all $n, \theta_{n}$,
  \begin{align}
    \varepsilon^{\ast}(\Delta)
    \leq \frac{\bar{k}(\bar{k}+1)}{2}\,
         \ddot{c}_{\max}\,(\nu^{\mathrm{peak}}/\mu)\,\Delta^2.
  \end{align}
\end{prf}

\subsection{Proof of \cref{thm:eff_loss_quadratic}}
\label{app:proof_eff_quadratic}

\subsubsection{Auxiliary: Continuous-DSO start-time optimality and Lipschitz perturbation}
\label{app:DSO_start_time}

The proof uses two structural facts about the continuous-DSO
start time $a(\Vttheta)$, both following from the binding capacity
condition with sorting.

Under the binding capacity condition with sorting permutation
$\pi$, the assignment is
$\tau^{\ast}_{n}(\Vttheta) = a(\Vttheta) + \pi^{-1}(n)/\mu$
where $a(\Vttheta)$ minimizes
\begin{align}
  \sum_{n \in \ClN}
  c\bigl(\theta_{n},\, a + \pi^{-1}(n)/\mu\bigr).
  \label{eq:a_opt}
\end{align}
The first-order condition is
\begin{align}
  \sum_{n \in \ClN}
  c_{t}(\theta_{n},\, \tau^{\ast}_{n}(\Vttheta)) = 0,
  \label{eq:a_FOC}
\end{align}
which is the optimality condition referenced as the FOC below.

For two profiles $\Vttheta$ and $\tilde{\Vttheta}$ in the same
sorting region with $\Vtdelta = \tilde{\Vttheta} - \Vttheta$,
let $a_{0} = a(\Vttheta)$ and $\tilde a = a(\tilde{\Vttheta})$.
The optimal start time is Lipschitz in the profile perturbation.
Subtracting the FOCs and applying the mean value theorem to each
$f' = -c_{t}$ term,
\begin{align}
  \sum_{n} f''(\xi_{n})\bigl[\delta_{n} - (\tilde a - a_{0})\bigr] = 0
\end{align}
for intermediate $\xi_{n}$.
Since $\ddot{c}_{\min} \leq f'' \leq \ddot{c}_{\max}$ by~(C4) of
\cref{asm:schedule_cost},
\begin{align}
  |\tilde a - a_{0}|
  = \left|\frac{\sum_{n} f''(\xi_{n})\,\delta_{n}}
               {\sum_{n} f''(\xi_{n})}\right|
  \leq \max_{n} |\delta_{n}|
  \leq \tfrac{\Delta}{2}.
  \label{eq:a_Lip}
\end{align}
Setting $\Delta a \coloneqq \tilde a - a_{0}$, this is the
centroid shift used in~\cref{eq:DSO_within_slot_sym}.

\subsubsection{Reduction to centroid costs}

\begin{prf}
  Under sorting preservation, the slot and continuous DSO
  assignments differ only by a slot-wise start-time shift
  $\Delta a$ (Inequality~\labelcref{eq:a_Lip}).
  We (i) reduce the slot cost $J^{\mathrm{slot}}$ to a sum of
  centroid costs via a fluid within-slot variance bound, (ii) Taylor
  expand each centroid cost around the continuous DSO arrival
  time, and (iii) show that the first-order term vanishes (FOC plus
  within-slot symmetry), leaving only the $\ClO(N\Delta^{2})$
  remainder.

  \textit{Step 1: Reduction to centroid costs.}
  By the binding-capacity fluid assignment, each active slot has
  width $W_\Delta(s_{i}) = d(s_{i})/\mu = (\nu/\mu)\Delta = \ClO(\Delta)$.
  A second-order Taylor expansion of $c(\theta_{n}, \cdot)$ about the
  interval centroid $\bar\tau_{i}^{\mathrm{slot}}$, under the uniform
  within-slot law, gives
  $\frac{1}{W_\Delta(s_{i})}\int_{\Gamma^{\ast}_\Delta(s_{i})}
  c(\theta_{n}, t)\,\mathrm{d}t
  = c(\theta_{n}, \bar\tau_{i}^{\mathrm{slot}})
  + \tfrac12 \ddot{c}_{\max}\,\mathrm{Var}_{\mathrm{unif}}
  = c(\theta_{n}, \bar\tau_{i}^{\mathrm{slot}})
  + \ClO(\ddot{c}_{\max}\Delta^2)$,
  since $\mathrm{Var}_{\mathrm{unif}} = W_\Delta(s_{i})^2/12 = \ClO(\Delta^2)$.
  Summing over $n$,
  \begin{align}
    J^{\mathrm{slot}}(\Vttheta;\Delta)
    = \sum_{n \in \ClN} c(\theta_{n}, \bar\tau_{i_{n}}^{\mathrm{slot}})
    + \mathcal{O}(N\Delta^2),
    \label{eq:Jslot_centroid}
  \end{align}
  where $\bar\tau_{i}^{\mathrm{slot}}$ is the centroid of
  $\Gamma^{\ast}_\Delta(s_{i})$.
  (This fluid centroid expansion, not the discrete mean
  \cref{lem:within_slot_mean}, is what the rate requires, and the two
  coincide when $d(s) \geq 1$.)
  Under sorting preservation, both the continuous and $\Delta$-DSO
  assignments place vehicles at $1/\mu$ intervals and differ only
  by a common start-time shift $\Delta a$ with
  $|\Delta a| \leq \Delta/2$ (Inequality~\labelcref{eq:a_Lip}):
  \begin{align}
    \bar\tau_{i}^{\mathrm{slot}} = \bar\tau_{i}^{\mathrm{cont}} + \Delta a,
    \quad
    \sum_{n \in \ClN_{i}}
    (\bar\tau_{i}^{\mathrm{cont}} - \tau_{n}^{\mathrm{cont}}) = 0.
    \label{eq:DSO_within_slot_sym}
  \end{align}

  \textit{Step 2: Taylor expansion.}
  For each $n \in \ClN_{i}$, expand
  $c(\theta_{n}, \bar\tau_{i}^{\mathrm{slot}})$ around
  $\tau_{n}^{\mathrm{cont}}$:
  \begin{align}
    c(\theta_{n}, \bar\tau_{i}^{\mathrm{slot}}) - c(\theta_{n}, \tau_{n}^{\mathrm{cont}})
    &= c_{t}(\theta_{n}, \tau_{n}^{\mathrm{cont}})
       \cdot \bigl[\Delta a
       + \bigl(\bar\tau_{i}^{\mathrm{cont}} - \tau_{n}^{\mathrm{cont}}\bigr)\bigr]
       + \mathcal{O}(\Delta^2),
    \label{eq:eff_taylor}
  \end{align}
  with the second-order remainder bounded, using
  $|\Delta a + \bar\tau_{i}^{\mathrm{cont}} - \tau_{n}^{\mathrm{cont}}|
  \leq (\Delta + W_\Delta(s_{i}))/2$, by
  $\tfrac{1}{2}\ddot{c}_{\max} (\Delta a + \bar\tau_{i}^{\mathrm{cont}}
  - \tau_{n}^{\mathrm{cont}})^2 \leq \ddot{c}_{\max}(\Delta + W_\Delta(s_{i}))^2/8
  = \mathcal{O}(\Delta^2)$ uniformly.

  \textit{Step 3: Cancellation of first-order terms.}
  Summing~\cref{eq:eff_taylor} over $n \in \ClN$,
  \begin{align}
    J^{\mathrm{slot}} - J^{\ast}
    = \Delta a\,\sum_{n \in \ClN} c_{t}(\theta_{n}, \tau_{n}^{\mathrm{cont}})
    + \sum_{i=1}^M \sum_{n \in \ClN_{i}}
      c_{t}(\theta_{n}, \tau_{n}^{\mathrm{cont}})
      \bigl(\bar\tau_{i}^{\mathrm{cont}} - \tau_{n}^{\mathrm{cont}}\bigr)
    + \mathcal{O}(N\Delta^2).
    \label{eq:eff_decomp}
  \end{align}
  The first-order condition for the continuous-DSO start time
  (\cref{eq:a_FOC}) gives
  $\sum_{n} c_{t}(\theta_{n}, \tau_{n}^{\mathrm{cont}}) = 0$, so the
  $\Delta a$-term vanishes identically.
  For the cross sum, write
  $c_{t}(\theta_{n}, \tau_{n}^{\mathrm{cont}}) = \bar c_{t,i} + \zeta_{n}$,
  where $\bar c_{t,i}$ is the slot average and
  $|\zeta_{n}| \leq \ddot{c}_{\max}(1 + \nu^{\mathrm{peak}}/\mu)\Delta/2$
  by~(C1) and the within slot variation of
  $(\theta_{n}, \tau_{n}^{\mathrm{cont}})$.
  Then
  \begin{align}
    \sum_{n \in \ClN_{i}}
    c_{t}(\theta_{n}, \tau_{n}^{\mathrm{cont}})\bigl(\bar\tau_{i}^{\mathrm{cont}}
    - \tau_{n}^{\mathrm{cont}}\bigr)
    &= \bar c_{t,i} \cdot 0
       + \sum_{n \in \ClN_{i}}
       \zeta_{n}\,(\bar\tau_{i}^{\mathrm{cont}} - \tau_{n}^{\mathrm{cont}}),
  \end{align}
  using~\cref{eq:DSO_within_slot_sym}.
  The remaining sum is bounded by
  $\ddot{c}_{\max}(1 + \nu^{\mathrm{peak}}/\mu)\Delta/2 \cdot \sum_{n}
  |\bar\tau_{i}^{\mathrm{cont}} - \tau_{n}^{\mathrm{cont}}|
  \leq \ddot{c}_{\max}(1 + \nu^{\mathrm{peak}}/\mu)\Delta/2 \cdot d(s_{i})\,W_\Delta(s_{i})/2
  = \mathcal{O}(d(s_{i})\,\Delta^2)$ per slot, and summing over slots
  yields $\mathcal{O}(N\Delta^2)$.

  \textit{Step 4: Combining.}
  The Taylor remainder contributes
  $\tfrac{1}{8}\ddot{c}_{\max}(1 + \nu^{\mathrm{peak}}/\mu)^{2}\,N\Delta^{2}$
  uniformly across vehicles,
  the within slot cross sum adds
  $\tfrac{1}{4}\ddot{c}_{\max}(1 + \nu^{\mathrm{peak}}/\mu)\,(\nu^{\mathrm{peak}}/\mu)\,
  N\Delta^{2}$,
  and the slot mean discrepancy of \cref{lem:within_slot_mean}
  adds $\mathcal{O}(\dot{c}_{\max}\,L_{\nu}\,N\Delta^{2}/\mu)$
  under \cref{asm:demand_density}.
  Setting
  \begin{align}
    C_{L}
    \coloneqq
    \tfrac{1}{8}\ddot{c}_{\max}(1 + \nu^{\mathrm{peak}}/\mu)^{2}
    + \tfrac{1}{4}\ddot{c}_{\max}(1 + \nu^{\mathrm{peak}}/\mu)(\nu^{\mathrm{peak}}/\mu)
    + \mathcal{O}\!\left(\frac{\dot{c}_{\max}\,L_{\nu}}{\mu}\right),
    \label{eq:C_L_def}
  \end{align}
  which depends only on the primitives
  $\ddot{c}_{\max}, \dot{c}_{\max}, \nu^{\mathrm{peak}}, \mu, L_{\nu}$
  and not on $\Delta$ or $N$, gives
  \begin{align}
    L^{\ast}(\Delta) = J^{\mathrm{slot}} - J^{\ast}
    \leq N\,C_{L}\,\Delta^{2}
  \end{align}
  for every $\Delta > 0$.
  No condition on the within-slot distribution of preferred
  arrival times beyond \cref{asm:demand_density} is invoked.
\end{prf}

\subsection{Proof of \cref{prop:NP_lower_bound}}
\label{app:proof_NP_lower_bound}

\begin{prf}
Without the toll, the truthful slot places the vehicle at the
slot center, which is generally offset from $\theta_{n}$ by
$D_{n} = (W_\Delta(s_i)/\Delta - 1)(\theta_{n} - T/2)$.
A misreport to the slot closest to
$s_{i} - D_{n}\Delta/W_\Delta(s_i)$ places
the vehicle near $\theta_{n}$.
The cost difference between these two assignments converges to a
strictly positive constant $f(D_{n})/2$ as $\Delta \to 0$.

We set up a local reference frame around the analyzed
vehicle's slot, with local interval width $W = W_\Delta(s_{i})$
and local load $W/\Delta = d(s_{i})/(\mu\Delta)$ at slot
$s_{i}$.
By~\cref{asm:demand_density} the interval width varies smoothly,
so the analysis below extends to neighboring slots up to
$\mathcal{O}(\Delta)$ smoothness corrections, under the sorting
property (\cref{lem:FIFW}).
Because the intervals are contiguous and ordered, the midpoint of
$\Gamma^{\ast}_\Delta(s_{i})$ is
$\bar{t}(s_{i}) = (W/\Delta)\, s_{i} + (1 - W/\Delta)\,T/2$.

\textit{Step 1: Truthful assignment.}
A vehicle with $\theta_{n} = s_{i}$ that reports
truthfully is assigned around~$\bar{t}(s_{i}) = \theta_{n} +
D_{n}$, where $D_{n} \coloneqq (W/\Delta - 1)(\theta_{n} - T/2)$.

\textit{Step 2: Near-optimal misreport.}
The ideal target is $s_{j}^{\circ} = s_{i} - D_{n}\Delta/W$, which
satisfies $\bar{t}(s_{j}^{\circ}) = \theta_{n}$.
In general $s_{j}^{\circ} \notin \ClS_\Delta$. Let $s_{j}$ be the
nearest slot in $\ClS_\Delta$ to $s_{j}^{\circ}$, so that
$|s_{j} - s_{j}^{\circ}| \leq \Delta/2$.
The assignment center under $s_{j}$ satisfies
$|\bar{t}(s_{j}) - \theta_{n}| = (W/\Delta)|s_{j} - s_{j}^{\circ}|
\leq W/2$.

\textit{Step 3: Gain calculation.}
Using the substitution $u = t - \theta_{n}$ and writing
$f(u) \coloneqq c(\theta_{n},\, \theta_{n} + u)$, the no-toll
costs become
\begin{align}
  C^{\mathrm{NT}}_\Delta(s_{i};\, \theta_{n})
  &= \frac{1}{W}\int_{D_{n} - W/2}^{D_{n} + W/2} f(u)\,\mathrm{d}u,
  \\
  C^{\mathrm{NT}}_\Delta(s_{j};\, \theta_{n})
  &= \frac{1}{W}\int_{\bar{u}_{j} - W/2}^{\bar{u}_{j} + W/2}
  f(u)\,\mathrm{d}u,
\end{align}
where $\bar{u}_{j} = \bar{t}(s_{j}) - \theta_{n}$ satisfies
$|\bar{u}_{j}| \leq W/2$.
As $\Delta \to 0$ (and hence $W \to 0$ and
$|\bar{u}_{j}| \leq W/2 \to 0$), the integral
means converge to their midpoint values by the
continuity of $f$ alone
((C1) of \cref{asm:schedule_cost} suffices, with
(C4) invoked here only for consistency with the rest of the
analysis):
$C^{\mathrm{NT}}_\Delta(s_{i};\, \theta_{n}) \to f(D_{n})
= c(\theta_{n},\, \theta_{n} + D_{n})$ and
$C^{\mathrm{NT}}_\Delta(s_{j};\, \theta_{n}) \to f(0)
= c(\theta_{n},\, \theta_{n}) = 0$.
By conditions~(C1) and~(C2) of \cref{asm:schedule_cost},
$f(D_{n}) > 0$ whenever $D_{n} \neq 0$.
Setting $c_{0} \coloneqq f(D_{n})/2 > 0$ completes the proof for
$\theta_{n} = s_{i}$.
For general $\theta_{n} \in \ClS$, the rounding
$r_{\Delta}(\theta_{n})$ introduces a perturbation of at most
$\Delta/2$ in the assignment center.
Since $D_{n} = (W/\Delta - 1)(\theta_{n} - T/2)$ is
$\Delta$-independent and bounded away from zero by hypothesis,
$f(D_{n})/2 > 0$ is preserved for sufficiently small $\Delta$.
The above argument, which evaluates the change of variables in a
local frame around the truthful slot, extends under
\cref{asm:demand_density} because the interval width
at a non-truthful slot $s_{j}$ differs
from the local width by at most
$|W_\Delta(s_{i_{n}}) - W_\Delta(s_{j})| = \mathcal{O}(L_\nu\Delta^2/\mu)$
(\cref{lem:width_smoothness}). This perturbation contributes at most
$\mathcal{O}(\Delta)$ to $|\bar{u}_{j}|$, so the lower bound
$f(D_{n})/2$ is unaffected for sufficiently small $\Delta$.
\end{prf}

\section{Conditions for the Quadratic Bounds}
\label{app:regime}

In the nonatomic (price-taking) limit the validity of the bounds is
governed by $\Delta$-independent conditions on the model primitives
$(\nu, \mu, c)$.
A single user has negligible mass, so no finite-$\Delta$
single deviator correction enters, and the bounds hold for every
$\Delta \in (0, \Delta_{\max}]$.

\subsection{Primary condition (shape $\times$ capacity)}

The bounds in
\cref{thm:eps_quadratic_global,thm:eff_loss_quadratic} hold,
within the modelling scope $\nu^{\mathrm{peak}}/\mu > 1$
(\cref{subsec:FIFW}), under the
\emph{shape-only peakedness condition}
\begin{align}
  \nu^{\mathrm{peak}}/\mu
  \;=\; \frac{\nu^{\mathrm{peak}}}{\mu}
  \;<\; 2,
  \label{eq:peak_shape_condition}
\end{align}
the schedule delay cost curvature ratio condition
\begin{align}
  \frac{\ddot{c}_{\min}}{\ddot{c}_{\max}}
  \;>\; \phi(\nu^{\mathrm{peak}}/\mu),
  \label{eq:curvature_ratio_condition}
\end{align}
and a demand-smoothness threshold of the form
$L_{\nu} \le c_{\nu}\,\mu/(\dot{c}_{\max})$ for a
$\Delta$-independent constant $c_{\nu}$ proportional to the
curvature margin
$\ddot{c}_{\min}/\ddot{c}_{\max} - \phi(\nu^{\mathrm{peak}}/\mu)$
(the explicit threshold appears in the proof of
\cref{thm:eps_quadratic_global}).
The three conditions depend only on the prior $\nu$, the
capacity rate $\mu$, the demand size $N$, and the schedule delay
cost $c$, all $\Delta$-independent.
The admissible region in the
$(\nu^{\mathrm{peak}}/\mu,\,
\ddot{c}_{\min}/\ddot{c}_{\max})$ plane is the area above the
curve $y = \phi(\nu^{\mathrm{peak}}/\mu)$.
Writing $\eta \coloneqq 1 - \mu/\nu^{\mathrm{peak}}
\in (0, 1/2)$ for $\nu^{\mathrm{peak}}/\mu \in (1, 2)$, the
shape function decomposes as
$\phi(\nu^{\mathrm{peak}}/\mu) = \eta + \eta^{2}$, where the linear
term originates from the toll second difference and the quadratic
term from the within-slot displacement.

The cut-off $\nu^{\mathrm{peak}}/\mu < 2$ is the binding of two
independent thresholds. The discrete concavity in
\cref{lem:g_concavity} permits $\nu^{\mathrm{peak}}/\mu$ up to
$(3+\sqrt{5})/2 \approx 2.618$ under symmetric cost, while the
slot-mean Taylor expansion underlying the second-difference bound
in the proof of \cref{lem:g_concavity} requires the peak load
$\nu^{\mathrm{peak}}/\mu < 2$ (within-slot displacement below two
slot widths) for its remainder to be sign-controlled.
The latter is tighter and fixes
Inequality~\labelcref{eq:peak_shape_condition}.

The efficiency bound \cref{thm:eff_loss_quadratic} requires neither
the curvature condition nor the peakedness cut-off and holds for
every $\Delta > 0$. Only the global $\varepsilon$-strategyproofness
bound invokes the peakedness and curvature
condition~\labelcref{eq:cc_regime}, through the discrete concavity of
\cref{lem:g_concavity}.
The smoothness threshold \labelcref{eq:smoothness_threshold} on
$L_{\nu}$ keeps the truthful peak load within an
$\mathcal{O}(L_{\nu})$ neighbourhood of $\nu^{\mathrm{peak}}/\mu$, so
that $\phi(\nu^{\mathrm{peak}}/\mu) < \ddot{c}_{\min}/\ddot{c}_{\max}$
keeps the concavity coefficient $\bar\kappa_{g} > 0$ of
\cref{lem:g_concavity} strictly positive.

\subsection{Numerical check of the conditions}

The peak load $\nu^{\mathrm{peak}}/\mu$, not the macroscopic load
$N/(S\mu)$, enters the validity conditions
\labelcref{eq:peak_shape_condition,eq:curvature_ratio_condition}.
For the uniform prior (the condition-satisfying case in the sweep
of \cref{tab:experimental_design}) the two coincide,
$\nu^{\mathrm{peak}}/\mu = N/(S\mu) \approx 1.33$ ($\eta = 0.25$),
and with $\ddot{c}_{\min}/\ddot{c}_{\max} = 0.5$ the curvature
margin $\ddot{c}_{\min}/\ddot{c}_{\max} - (\eta + \eta^{2})
\approx 0.19 > 0$, so the conditions hold.
The baseline triangular prior, by contrast, has
$\nu^{\mathrm{peak}}/\mu = 2N/(S\mu) \approx 2.66$
($\eta \approx 0.62$), which lies \emph{outside} the peakedness
condition $\eta \in (0, \tfrac{1}{2})$ (\labelcref{eq:peak_shape_condition}), with margin
$0.5 - \phi(2.66) \approx -0.51 < 0$. The two Beta
priors ($\nu^{\mathrm{peak}}/\mu \approx 2.46\,N/(S\mu)$) likewise
exceed the cut-off
\labelcref{eq:peak_shape_condition} once $N/(S\mu) > 1$.
\cref{fig:distribution_sensitivity} shows the
$\mathcal{O}(\Delta^{2})$ rate nonetheless persists empirically well
beyond the formal conditions.

\subsection{Explicit constants}

The constant $C_{\varepsilon}$ of
\cref{thm:eps_quadratic_global} admits the explicit form
\begin{align}
  C_{\varepsilon}
  \;=\;
  \underbrace{\frac{\bar k(\bar k + 1)}{2}}_{\substack{\text{combinatorial}\\\text{factor}}}
  \cdot
  \underbrace{\ddot{c}_{\max}}_{\substack{\text{cost}\\\text{curvature}}}
  \cdot
  \underbrace{\frac{\nu^{\mathrm{peak}}}{\mu}}_{\substack{\text{peak demand}\\\text{vs.\ capacity}}},
  \label{eq:eps_constants}
\end{align}
where $\bar k = \lceil (1-\eta)/m \rceil$ with curvature margin
$m \coloneqq \ddot{c}_{\min}/\ddot{c}_{\max} - (\eta + \eta^{2})$
counts the depth of the worst misreport.
For the condition-satisfying uniform prior of \cref{sec:numerical}
($\eta = 0.25$, $\ddot{c}_{\min}/\ddot{c}_{\max} = 0.5$),
$m = 0.19$ and $\bar k = 4$, and the bound holds for every
$\Delta \in (0, \Delta_{\max}]$, covering the operational range
$\Delta \in [6, 90]$\,min.\label{ftn:delta_range}

The upper bound is
$\Delta_{\max} = \min\{\Delta_{\max}^{(a)}, \Delta_{\max}^{(b)}\}$,
where $\Delta_{\max}^{(a)}$ is the largest slot width for which the
$\mathcal{O}(\Delta^3)$ smoothness remainder in $R_\nu(\Delta)$ stays
dominated by the $\Theta(\Delta^2)$ concavity margin of
\cref{lem:g_concavity}, and $\Delta_{\max}^{(b)} = T/\bar k$ leaves
enough slots for the telescoping of Step~3.
The peakedness and curvature condition~\labelcref{eq:cc_regime} and the
smoothness threshold~\labelcref{eq:smoothness_threshold} are
themselves $\Delta$-independent, since the $\Delta^2$ in
$\bar\kappa_g$ cancels in~\labelcref{eq:smoothness_threshold}.

\end{APPENDIX}

\bibliography{references}

\end{document}